\newcommand{\qi}[1]{{  #1}}
\tikzstyle{decision} = [diamond, draw, fill=blue!20, 
\tikzstyle{block} = [rectangle, draw, fill=blue!20, 
\tikzstyle{line} = [draw, -latex']
\tikzstyle{cloud} = [draw, ellipse,fill=red!20, node distance=3cm,
\tikzset{main node/.style={circle,fill=blue!20,draw,minimum size=1cm,inner sep=0pt},  }
\newcommand{\la}{\langle}
\newcommand{\ra}{\rangle}
\newcommand{\ts}{\mathsf{T}}
\newcommand{\ba}{\begin{array}}
\newcommand{\ea}{\end{array}}
\newcommand{\be}{\begin{equation}}
\newcommand{\ee}{\end{equation}}
\newcommand{\bea}{\begin{eqnarray}}
\newcommand{\eea}{\end{eqnarray}}
\newcommand{\beaa}{\begin{eqnarray*}}
\newcommand{\eeaa}{\end{eqnarray*}}
\def\div{\mathbf{div}}
\def\hR{\mathbb{R}}
\def\q{\quad}
\def\qq{\qquad}
\def\pa{\partial}
\def\qed{ \hfill \vrule width.25cm height.25cm depth0cm\smallskip}
\newcommand{\basa}{\begin{assumption}}
\newcommand{\easa}{\end{assumption}}
\newcommand{\bas}{\begin{assum}}
\newcommand{\eas}{\end{assum}}
\def\1{{\bf 1}}
\def\:{\!:\!}
\begin{document}
%
%

\title[Hypoelliptic Entropy dissipation]{Hypoelliptic Entropy dissipation for stochastic differential equations}
\author{Qi Feng$^*$}
\author{Wuchen Li$^\dagger$}
\thanks{$^*$ Department of Mathematics, University of Michigan, Ann Arbor, MI 48109. Email: qif@umich.edu.}
\thanks{$^\dagger$Department of Mathematics, University of South Carolina,  Columbia, SC 29208. Email:  wuchen@mailbox.sc.edu.}
\thanks{Qi Feng is partially supported by the National Science Foundation under grant DMS-2306769. Wuchen Li is supported by AFOSR MURI FP 9550-18-1-502, AFOSR YIP award No. FA9550-23-1-0087, and NSF RTG: 2038080.}
\thanks{\textbf{Key words}: Hypoellipticity; Information Gamma calculus; Entropy dissipation; Underdamped Langevin dynamics; three oscillator chain model.}
\thanks{\textbf{MSC}: 37L45, 58J65, 60D05 , 60H10.}
\thanks{}
\begin{abstract}
We study the convergence analysis for {general} degenerate and non-reversible stochastic differential equations (SDEs). We apply the Lyapunov method to {analyze} the Fokker-Planck equation, in which the Lyapunov functional is chosen as a weighted relative Fisher information functional. We derive a structure condition and formulate the Lyapunov constant explicitly. We prove the exponential convergence result for the probability density function towards its invariant distribution in the $L_1$ {distance}. Two examples are presented: underdamped Langevin dynamics with variable diffusion matrices and three oscillator chain models with nearest-neighbor couplings.  
\end{abstract}
\maketitle
\section{Introduction}
Consider an It\^o stochastic differential equation (SDE)  
 \begin{equation}\label{a}
     dX_t=b(X_t)dt+\sqrt{2}a(X_t)dB_t,
 \end{equation}
where $X_t\in\mathbb{R}^{n+m}$ is a stochastic process with dimensions $n\in\mathbb{Z}_+$, {$m\in\mathbb{Z}_{\ge0}$}, $b\in \qi{C^{\infty}(\mathbb{R}^{n+m}; \mathbb{R}^{n+m})  }$ is a smooth drift vector field, $a\in \qi{C^{\infty}( \mathbb R^{n+m};\mathbb{R}^{(n+m)\times n}) }$ is a smooth diffusion matrix, and $B_t$ is a standard Brownian motion in $\mathbb{R}^{n}$.  

We study the long-time {dynamical behavior} of SDE \eqref{a}. How fast does the probability density function of SDE \eqref{a} converge to its invariant distribution? The convergence behavior of SDE \eqref{a} plays an essential role in both theory and applications. Theoretically, it is a core problem in probability \cite{BE, Gross}, non-equilibrium dynamics \cite{C1, HairerMattingly, IOS2019, Nelson, Villani2006_hypocoercivity}, differential geometry \cite{FB, BaudoinGarofalo09} and ergodic theory \cite{BE2008, LY, MSH, Luc}. In applications, the convergence rate is useful for studying long-time molecular dynamics behaviors \cite{CCBJ, Leimuhler, Sachs}. Nowadays, the convergence rate is important in estimating the speed of Markov-Chain-Monte-Carlo methods (MCMC). This is an emerging issue in artificial intelligence (AI), and Bayesian sampling algorithms \cite{LuWang, CCBJ, Duncan, Sachs}. 

In \qi{the} current framework, 
when the diffusion matrix $a$ is non-degenerate, and the drift vector field $b$ is a gradient \qi{vector field}, SDE \eqref{a} is known as a reversible diffusion process. Its convergence rate has been studied \qi{by using various methods}. A typical way is the entropy method; see \cite{AMTU}. In deriving the entropy method, a method is known as \qi{the} Bakry-{\'E}mery Gamma calculus. \qi{By applying this method}, one can derive the convergence rate of SDE \eqref{a} by \qi{using the} Bochner's formula. However, Bochner's type formula often fails when \qi{the diffusion} matrix $a$ is a degenerate matrix. In other words, if SDE \eqref{a} is a degenerate diffusion process with a possible irreversible drift vector field, the existing entropy method (e.g., \cite{Villani2006_hypocoercivity}) {may} not be applied directly.  

In this paper, we derive \qi{the} convergence analysis for SDE \eqref{a} by \qi{using} a modified entropy dissipation method. We design a weighted relative Fisher information functional as a Lyapunov functional. By studying the dissipation of \qi{the} weighted Fisher information functional, we introduce a structure condition to derive an ``information Bochner type formula''; see Assumption \ref{prop:main condition}. We formulate algebraic tensors, a.k.a. {the} modified Hessian matrices or {the} ``Ricci curvature type tensors''; see details in Theorem \ref{thm: information bochner}. {Here, the modified Hessian matrix represents the generalized Hessian operator of negative logarithms of invariant density.} The smallest eigenvalue of the modified Hessian matrix characterizes the convergence behavior of SDE \eqref{a} in terms of $L_1$ distances. In examples, we present explicit convergence rates for variable diffusion coefficient underdamped Langevin dynamics and three oscillator chain models. 

Many studies exist for convergence behaviors of degenerate SDEs in \cite{AM2019, BFM2020, CCBJ, EckmannHairer, HerauNier, IOS2019, Gadat, LucThomas,LuWang, MN, Talay, Villani2006_hypocoercivity}. Previous works have established the convergence rates under various metrics, e.g., $H^{1} $ or $L_2$ distances. In particular, \cite{AE} has applied a modified entropy dissipation method to constant {$a$} degenerate diffusion processes. Besides, the modified Gamma calculus has been invented in \cite{BaudoinGarofalo09}. It has been applied to study underdamped Langevin dynamics \cite{FB, BGH2019}. This method establishes the convergence in the $H^1$ distance, {and it requires} an iterative symmetry condition between \qi{$a$ and $z$, which will be discussed in the next section}. Compared to the previous results, we derive a structure condition in Assumption \ref{prop:main condition}, \qi{which does not require the iterative symmetry condition}. We {next} derive \qi{a new modified Hessian matrix} which can handle non-gradient drift vector fields and degenerate diffusion matrices \qi{at the same time}. Technically speaking, we apply the second-order calculus in probability density space embedded with an optimal transport type metric \cite{Lafferty, Li2018_geometrya, Villani2009_optimal}. 
{This} computation extends the second-order derivative of {Kullback-Leibler} divergence in sub-Riemannian density manifold \cite{FL, FL1}. 

We organize the paper as follows. In Section \ref{sec2}, we present the main result of this paper. We derive a modified Hessian matrix, whose smallest eigenvalue characterizes the convergence behavior of SDE \eqref{a}. We present two examples \qi{for} the main theorem:  underdamped Langevin dynamics with variable diffusion matrices in Section \ref{sec4}, and three oscillator chain models in Section \ref{sec: anharmonic}. The proofs of the main theorem are presented in Section \ref{sec3:1} with an ``information Bochner type formula''. The detailed computations of all examples are given in the Appendix.   

\section{Main result}\label{sec2}
In this section, we present the main result of this paper. 
\subsection{Settings} \label{sec2.1}
For a diffusion matrix $a(x)\in C^{\infty}(\mathbb R^{n+m}; \mathbb R^{(n+m)\times n} )$, 
we denote $n$ as the rank of $a(x)$. We denote $a(x)^{\ts}$ as the transpose of matrix $a(x)$, and $a(x)a(x)^{\ts}$ as the standard matrix multiplication. For $i=1,\cdots, n$,  we denote $a^{\ts}_i=(a(x)^{\ts})_i$ as the row vectors of $a(x)^{\ts}$, and $a_{\cdot i}=a(x)_{\cdot i}$ as the column vectors of $a(x)$, i.e. $a^{\ts}_{i\hat i}=a_{\hat ii}$, for $\hat i=1,\cdots, n+m$. Furthermore, for each row vector  $a^{\ts}_i\in \mathbb R^{n+m}$ with $i=1,\cdots,n$, we denote $\mathbf A_i(x):=\sum_{\hat i=1}^{n+m}a^{\ts}_{i\hat i}\frac{\partial}{\partial x_{\hat i}}$ as the corresponding {vector field} for each row vector {$a^{\ts}_i=\sum_{\hat i=1}^{n+m}a^{\ts}_{i\hat i}e_i$. Here, we denote $\{e_i\}_{i=1}^{n+m}$ and $\{\frac{\partial}{\partial x_i }\}_{i=1}^{n+m}$ as the canonical basis for the Euclidean space $\mathbb R^{n+m}$ and its tangent space $T\mathbb R^{n+m}$, respectively}. Similarly, we denote $\mathbf A_0(x):=\sum_{\hat i=1}^{n+m}b_{\hat i}(x)\frac{\partial}{\partial x_{\hat i}}$ as the vector field associated to the drift term $b$. For SDE \eqref{a}, we assume that $b\colon \mathbb{R}^{n+m}\rightarrow\mathbb{R}^{n+m}$ and $a\colon \mathbb{R}^{n+m}\rightarrow \mathbb{R}^{(n+m)\times n}$ are smooth Lipschitz functions. We also assume that the collections of vector fields $\{\mathbf A_0(x),\mathbf A_1(x),\cdots,\mathbf A_n(x)\}$ satisfy the following weak H\"ormander condition \cite{Bismut}:
\begin{equation*}\qi{ 
    \mathrm{Span}\Big\{\mathbf A_1(x),\cdots, \mathbf A_n(x),~ [\mathbf A_{i_1},\cdots,[\mathbf A_{i_{k-1}},\mathbf A_{i_k}]\cdots](x),~0\leq i_1,\cdots,i_k\leq n, k\geq 2 \Big\}=\hR^{n+m},}
\end{equation*}
where $[\cdot , \cdot ]$ represents \qi{the} Lie bracket between two vector fields. For $i,j=0,1,\cdots, n$, we have \qi{ $[\mathbf A_i(x),\mathbf A_j(x)]= \sum_{\hat i,\hat j=1}^{n+m}\Big[  a^{\ts}_{i\hat i}\frac{\partial}{\partial x_{\hat i}}\Big(  a^{\ts}_{j\hat j}\frac{\partial}{\partial x_{\hat j}} \Big)-a^{\ts}_{j\hat j}\frac{\partial}{\partial x_{\hat j}}\Big(a^{\ts}_{i\hat i}\frac{\partial}{\partial x_{\hat i}} \Big) \Big]$}.  The weak H\"ormander condition means that the Lie algebra generated by the vector fields $\{ \mathbf A_0,\mathbf A_1,\cdots,\mathbf A_n\}$ is of full rank at every point $x\in\hR^{n+m}$. This condition \cite{Bismut,Hormander} ensures the existence of a smooth probability density function of SDE \eqref{a}. Under the above assumptions, the Fokker-Planck equation of SDE \eqref{a} satisfies  
\begin{equation}\label{FPE}
\partial_tp(t,x)=-\nabla\cdot(p(t,x)b(t,x))+\sum_{i=1}^{n+m}\sum_{j=1}^{n+m}\frac{\partial^2}{\partial x_i\partial x_j} \Big(\Big(a(x)a(x)^{\ts}\Big)_{ij}p(t,x)\Big),
\end{equation}
where $p(t,x)$ is the probability density function of SDE \eqref{a} with {an initial condition satisfying} 
\begin{equation*}
p_0(x)=p(0,x), \quad \int p_0(x) dx=1,\quad p_0(x)\geq 0.    
\end{equation*}

Here $\int$ denotes the integration over the entire spatial domain $\mathbb{R}^{n+m}$. Under our assumptions, $p(t,x)$ is smooth w.r.t both time and spatial variables $t$, $x$, respectively, and stays positive for all positive time $t\in (0,\infty)$. We assume that the SDE \eqref{a} has a unique invariant 
density $\pi$, which is strictly positive everywhere. In fact, $\pi$ is the equilibrium of equation \eqref{FPE}: 
\begin{equation*}
    -\nabla_x\cdot(\pi(x)b(x))+\sum_{i=1}^{n+m}\sum_{j=1}^{n+m}\frac{\partial^2}{\partial x_i\partial x_j}\Big(\Big(a(x)a(x)^{\ts}\Big)_{ij} \pi(t,x)\Big)=0. 
\end{equation*}
We also assume that $\pi\in C^{2}(\mathbb{R}^{n+m}; \mathbb{R})$, which has an \qi{explicit} formula. This paper studies the convergence behavior of $p(t,x)$ towards the invariant density function $\pi(x)$. 

The convergence result is based on the following decomposition/reformulation of the Fokker-Planck equation \eqref{FPE}.
\begin{proposition}[Decomposition]
Denote a vector field $\gamma\colon \mathbb{R}^{n+m}\rightarrow\mathbb{R}^{n+m}$ satisfying 
 \begin{equation*}\qi{ 
\gamma(x):=  \Big( a(x)a(x)^{\ts}\Big)\nabla\log \pi(x)-b(x)+\Big(\sum_{j=1}^{n+m}\frac{\partial}{\partial x_j}\Big(a(x)a(x)^{\ts}\Big)_{ij}\Big)_{i=1}^{n+m}.}
 \end{equation*}
Then equation \eqref{FPE}  \qi{is equivalent to the following equation}:
\begin{equation}\label{FPE1}
\begin{aligned}\qi{ 
\pa_t p(t,x)=\nabla\cdot \Big(p(t,x)\Big(a(x)a(x)^{\ts}\Big)\nabla \log \frac{p(t,x)}{\pi(x)}\Big)+\nabla\cdot(p(t,x)\gamma(x)).}
\end{aligned}
\end{equation}
In addition, 
\begin{equation*}
 \nabla\cdot (\pi(x)\gamma(x))=0. 
\end{equation*}
\end{proposition}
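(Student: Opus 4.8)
\proof (proposal).
The plan is to reduce the whole statement to one application of the product rule for the divergence operator. Write $D(x):=a(x)a(x)^{\ts}$, a symmetric matrix field, and let $\nabla\cdot D$ denote the vector field with components $(\nabla\cdot D)_i=\sum_{j=1}^{n+m}\partial_{x_j}D_{ij}$, so that by definition $\gamma=D\nabla\log\pi-b+\nabla\cdot D$. The goal is then to split the second-order operator in \eqref{FPE} into a weighted relative Fisher information flux term and a divergence-form term that recombines with the drift to produce $\nabla\cdot(p\gamma)$.

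First I would rewrite the diffusion term in \eqref{FPE}. Collapsing the double sum and applying the product rule in the inner derivative gives
\[
\sum_{i=1}^{n+m}\sum_{j=1}^{n+m}\frac{\partial^2}{\partial x_i\partial x_j}\big(D_{ij}p\big)
=\nabla\cdot\big(p\,(\nabla\cdot D)+D\nabla p\big).
\]
Next I would introduce the score of $p$ relative to $\pi$: since $\nabla p=p\nabla\log p$ and $\nabla\log p=\nabla\log\frac{p}{\pi}+\nabla\log\pi$, we have $D\nabla p=p\,D\nabla\log\frac{p}{\pi}+p\,D\nabla\log\pi$. Substituting this into the previous display and adding the drift term $-\nabla\cdot(pb)$ from \eqref{FPE} yields
\[
\partial_t p=\nabla\cdot\Big(p\,D\nabla\log\frac{p}{\pi}\Big)+\nabla\cdot\Big(p\,\big(D\nabla\log\pi-b+\nabla\cdot D\big)\Big),
\]
which is exactly \eqref{FPE1} once we recognize the second bracket as $\gamma$.

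For the identity $\nabla\cdot(\pi\gamma)=0$ I would run the same algebra with $p$ replaced by the invariant density $\pi$. The stationarity equation stated above for $\pi$ says that the right-hand side of \eqref{FPE} vanishes at $p=\pi$, and the reformulation just derived then reads $0=\nabla\cdot\big(\pi\,D\nabla\log\frac{\pi}{\pi}\big)+\nabla\cdot(\pi\gamma)=\nabla\cdot(\pi\gamma)$, since $\log\frac{\pi}{\pi}\equiv 0$. I do not expect a genuine obstacle: the proposition is a bookkeeping computation, and the only points needing care are the index convention for $\nabla\cdot D$ (the divergence must act on the second index of $D_{ij}$, which is harmless because $D=aa^{\ts}$ is symmetric) and checking that the standing smoothness and positivity hypotheses on $a$, $b$, $\pi$ and $p$ make every derivative above a classical one, so that the manipulations are justified pointwise. \qed
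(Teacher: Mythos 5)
Your proposal is correct and follows essentially the same route as the paper's proof: both expand the second-order term via the product rule, split $\nabla\log p$ into $\nabla\log\frac{p}{\pi}+\nabla\log\pi$ (the paper does this by adding and subtracting $\nabla\cdot(p\,aa^{\ts}\nabla\log\pi)$, which is the same manipulation), and then obtain $\nabla\cdot(\pi\gamma)=0$ by evaluating the reformulated equation at $p=\pi$ and using $\log\frac{\pi}{\pi}=0$. No gaps.
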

\begin{proof}
The proof follows by a direct calculation. 

(i) We note that 
\begin{equation*}
\begin{aligned}
   &\sum_{i=1}^{n+m}\sum_{j=1}^{n+m}\frac{\partial^2}{\partial x_i\partial x_j} \Big(\Big(a(x)a(x)^{\ts}\Big)_{ij}p(t,x)\Big)
      =\sum_{i=1}^{n+m}\frac{\partial}{\partial x_i}\sum_{j=1}^{n+m}\frac{\partial}{\partial x_j}\Big(\Big(a(x)a(x)^{\ts}\Big)_{ij}p(t,x)\Big)\\
            =&\sum_{i=1}^{n+m}\frac{\partial}{\partial x_i}\sum_{j=1}^{n+m}\Big(\frac{\partial}{\partial x_j}\Big(a(x)a(x)^{\ts}\Big)_{ij}p(t,x)+\Big(a(x)a(x)^{\ts}\Big)_{ij}\frac{\partial}{\partial x_j}p(t,x)\Big)\\
 =&\sum_{i=1}^{n+m}\frac{\partial}{\partial x_i}\Big(p(t,x)\frac{\partial}{\partial x_j}\sum_{j=1}^{n+m}\Big(a(x)a(x)^{\ts}\Big)_{ij}\Big)+\sum_{i,j=1}^{n+m}\frac{\partial}{\partial x_i}\Big(\Big(a(x)a(x)^{\ts}\Big)_{ij}\frac{\partial}{\partial x_j}p(t,x)\Big)\\
 =&\sum_{i=1}^{n+m}\frac{\partial}{\partial x_i}\Big(p(t,x)\frac{\partial}{\partial x_j}\sum_{j=1}^{n+m}\Big(a(x)a(x)^{\ts}\Big)_{ij}\Big)+\sum_{i,j=1}^{n+m}\frac{\partial}{\partial x_i}\Big(\Big(a(x)a(x)^{\ts}\Big)_{ij}p(t,x)\frac{\partial}{\partial x_j}\log p(t,x)\Big),
\end{aligned}
\end{equation*}
where we use the fact $\frac{\partial}{\partial x_j}p(t,x)=p(t,x)\frac{\partial}{\partial x_j}\log p(t,x)$ in the last equality. \qi{For simplicity of notations, we skip the variables $(t,x)$ below.} 
 Using the definition of $\gamma$ and the above observation, we show that the R.H.S. of Fokker-Planck equation \eqref{FPE} is equivalent to the following representation, 
\begin{equation*}
   \begin{aligned}
&-\nabla\cdot(pb)+\sum_{i=1}^{n+m}\sum_{j=1}^{n+m}\frac{\partial^2}{\partial x_i\partial x_j} \Big((aa^{\ts})_{ij}p\Big)\\
=&-\nabla\cdot(p b)+\sum_{i,j=1}^{n+m}\frac{\partial}{\partial x_i}\Big(p\frac{\partial}{\partial x_j}(aa^{\ts})_{ij}\Big)+\nabla\cdot(p(aa^{\ts})\nabla\log p)\\
=&-\nabla\cdot(p b)+\sum_{i,j=1}^{n+m}\frac{\partial}{\partial x_i}\cdot\Big(p\frac{\partial}{\partial x_j}(aa^{\ts})_{ij}\Big)+\nabla\cdot(p(aa^{\ts})\nabla\log\pi)\\
&-\nabla\cdot(p(aa^{\ts})\nabla\log\pi)+\nabla\cdot(p(aa^{\ts})\nabla\log p)\\
=&\nabla\cdot\Big(p (-b+{\Big[\sum_{j=1}^{n+m}}\frac{\partial}{\partial x_j}\Big(aa^{\ts}\Big)_{ij}\Big]_{i=1}^{n+m}+aa^{\ts}\nabla\log\pi)\Big) +\nabla\cdot(paa^{\ts}\nabla\log\frac{p}{\pi})\\
=&\nabla\cdot(p \gamma) +\nabla\cdot\Big(p(aa^{\ts})\nabla\log\frac{p}{\pi}\Big),
\end{aligned}
\end{equation*}
where we use the definition of $\gamma$ and the fact that $\nabla\log\frac{p}{\pi}=\nabla\log p-\nabla\log \pi$. This finishes the first part of the proof. 

(ii) Since $\pi$ is the equilibrium of equation \eqref{FPE}, {then} 
\begin{equation*}
   \Big(\nabla\cdot(p\gamma)+\nabla\cdot\Big(paa^{\ts}\nabla\log\frac{p}{\pi}\Big)\Big)\Big|_{p=\pi}=\nabla\cdot(\pi \gamma)=0,  
\end{equation*}
where we use the fact \qi{that} $\log \frac{\pi}{\pi}=\log 1=0$. This finishes the proof.  \qed 
\end{proof}


We observe that the term $\nabla\cdot(p aa^{\ts}\nabla\log\frac{p}{\pi})$ corresponds to the ``reversible'' (gradient) and ``hypoelliptic'' (degeneracy) component of SDE \eqref{a}'s Kolmogorov forward operator. In contrast, the term $\nabla\cdot(p\gamma)$ represents the ``irreversible'' (non-gradient) component in the Fokker-Planck equation \eqref{FPE}. In later proofs, we develop a Lyapunov analysis to study the dynamical behaviors of the SDE \eqref{a}. Using the decomposition \eqref{FPE1}, we derive exponential convergence results for equation \eqref{FPE}, where the convergence rate addresses both degenerate and irreversible components of equation \eqref{FPE}. 

\subsection{Structure condition and Hessian matrix}
 For a \qi{diffusion matrix} 
\bea\label{defn: a} \qi{
a\in C^{\infty}(\mathbb R^{n+m}
;\mathbb R^{(n+m)\times n}),}
\eea 
associated with SDE \eqref{a} with rank $n$, we introduce a complementary matrix, {defined by}, 
\bea\label{defn: z} 
z\in C^{\infty}(\mathbb R^{n+m}; \mathbb R^{(n+m)\times m}),
\eea  
such that 
\bea \label{full rank condition}
\text{Rank}\Big(a(x)a(x)^{\ts}+z(x)z(x)^{\ts}\Big) =n+m,\quad \text{for all} ~~x\in \hR^{n+m}.
\eea 
Recall all notations introduced in Section \ref{sec2.1}, we denote $a^{\ts}$ and $z^{\ts}$ as the transpose of  matrices $a$ and $z$, and write $\{ a^{\ts}_i\}_{i=1}^n$ and $\{ z^{\ts}_j\}_{j=1}^m$ as the row vectors of $a^{\ts}$ and $z^{\ts}$. The condition \eqref{full rank condition} means that the span of the vector fields associated with the row vectors $\{ a^{\ts}_i\}_{i=1}^n$ and $\{ z^{\ts}_j\}_{j=1}^m$ generates the whole space $\hR^{n+m}$. Throughout this paper, we denote 
\beaa 
\mathsf{U}(x)\in\mathcal C^{\infty}(\hR^{n+m};
\hR^{n+m})
\eeaa 
 as a smooth vector field. The vector field $\mathsf U$ {can be chosen as} 
\beaa 
\mathsf U=(\mathsf U_1,\cdots, \mathsf U_{n+m})^{\ts} =\nabla f(x)= \Big(\frac{\partial f}{\partial x_1,}\cdots, \frac{\partial f}{\partial x_{n+m}} \Big)^{\ts},
\eeaa  
for a smooth function $f\in C^{\infty}(\hR^{n+m})$. We will keep the convention of viewing {gradient operator $\nabla$} and vector field $\mathsf U$ as column vectors in matrix multiplication below. 
{We define $\la \cdot, \cdot\ra $ as the inner product for two vectors in the Euclidean space for any dimension.} 
We keep the following notation throughout the paper, 
we denote $a^{\ts}\mathsf U=( a^{\ts}_1\mathsf U, \cdots, a^{\ts}_n\mathsf U)^{\ts}$, and   
\bea \label{notation: nabla a} a^{\ts}_ia^{\ts}_k\nabla^2=\sum_{\hat i,\hat k=1}^{n+m}a^{\ts}_{i\hat i}a^{\ts}_{k\hat k}\frac{\partial^2}{\partial x_{\hat i}\partial x_{\hat k}}, \quad \text{and}\quad a^{\ts}\nabla^2 a = \Big(\sum_{\hat i,\hat j =1}^{n+m} a^{\ts}_{i\hat i} \frac{\partial}{\partial x_{\hat i}} \frac{\partial}{\partial x_{\hat j}} a_{\hat j j}\Big)_{i,j=1}^n.
\eea 
Next, we denote $\circ$ as a {product operation between a matrix and a vector as below},
\bea
 a^{\ts}\nabla^2 a\circ (a^{\ts}\mathsf U)=\Big(a^{\ts}_{i\hat i}\frac{\partial^2 a_{\hat j j}}{\partial x_{\hat i}\partial x_{\hat j}} a^{\ts}_j\mathsf U \Big)_{i=1}^n.
\eea 
{For notation convenience, we denote $\nabla_k=\frac{\partial}{\partial x_k}$, and write $\nabla a:=\nabla^{\ts}a= \Big(\sum_{\hat i=1}^{n+m}\frac{\partial}{\partial x_{\hat i}} a_{\hat i i}\Big)_{i=1}^n$}. The same convention applies to matrix $z^{\ts}$ as above.
We present two assumptions for our main results. The first assumption is about the {relationship of the $n+m$ vector fields}
\bea \label{defn: vector fields A Z}
\Big\{\sum_{\hat i=1}^{n+m}a^{\ts}_{i\hat i}\frac{\partial}{\partial x_{\hat i}}\Big\}_{i=1}^n=: \{\mathbf A_i \}_{i=1}^n,\quad  \text{and}\quad  \Big\{\sum_{k'=1}^{n+m}z^{\ts}_{kk'}\frac{\partial}{\partial x_{k'}} \Big\}_{k=1}^m=:\{\mathbf Z_k\}_{k=1}^m,
\eea  
associated to the row vectors of $a^{\ts}$  and $z^{\ts}$.
\begin{assum}\label{prop:main condition}\qi{ For a given matrix $a$ in \eqref{defn: a}, there exists a matrix $z$ in \eqref{defn: z}, such that condition \eqref{full rank condition} holds true. Furthermore, for any $i\in \{1,\cdots,n\}$ and $k\in\{1,\cdots,m\}$, we assume
\bea 
\sum_{\hat i=1}^{n+m} \mathbf Z_k a^{\ts}_{i\hat i} \frac{\partial}{\partial x_{\hat i}}
\in \mathrm{Span}\{\mathbf A_1,\cdots,\mathbf A_n\}.
\eea 
In other words, we assume that the following relation holds true
\bea\label{main condition} 
\sum_{\hat i=1}^{n+m} \mathbf Z_k a^{\ts}_{i\hat i} \frac{\partial}{\partial x_{\hat i}}=  \sum_{\hat i=1}^{n+m}\Big(\sum_{k'=1}^{n+m}z^{\ts}_{kk'}\frac{\partial a^{\ts}_{i\hat i}}{\partial x_{k'}} \Big)\frac{\partial}{\partial x_{\hat i}}=\sum_{k'=1}^{n+m}\sum_{l=1}^nz^{\ts}_{kk'} \lambda^{k'i}_{l}\mathbf A_l,
\eea
for a sequence of functions $\lambda^{k'i}_{l}$ with $i, l\in\{1,\cdots,n\}$ and  $k'\in\{1, \cdots, n+m\}$.}
\end{assum}

\begin{remark}[Structure conditions]
Assumption \ref{prop:main condition} can be easily verified for a class of degenerate SDEs. Two examples are given below. 
\begin{itemize}
\item[(i)] \qi{ If $a$ is a constant matrix, then according to \eqref{notation: nabla a}, $\nabla a$ is a $n$-dimensional zero vector, it is obvious that Assumption \ref{prop:main condition} is satisfied with all $\lambda_{\cdot}^{\cdot\cdot} \equiv 0$;}
\item[(ii)] \qi{ If the matrix $a(x)\in C^{\infty}(\hR^{n+m};\hR^{(n+m)\times n})$ only depends on the first {$n$} variables, i.e., 
\beaa 
a(x)=a(x_1,\cdots,x_n),
\eeaa 
and the vector fields $\{\mathbf Z_k\}_{k=1}^m$ associated with matrix $z \in C^{\infty}(\hR^{n+m};\hR^{(n+m)\times m})$ as defined in \eqref{defn: vector fields A Z} {satisfy} the following condition, 
\beaa 
\{\mathbf Z_k\}_{k=1}^m \in \text{Span}\Big\{\frac{\partial}{\partial x_{n+1}} ,\cdots, \frac{\partial}{\partial x_{n+m}} \Big\},
\eeaa 
where {$\frac{\partial}{\partial x_{i}}$, $i=n+1,\cdots, n+m$ denotes the $i$-th canonical basis on the tangent space $T\hR^{n+m}$}, then Assumption \ref{prop:main condition} holds true. This follows from the direct computation of \eqref{main condition}, since $\sum_{k'=1}^{n+m}z^{\ts}_{kk'}\frac{\partial}{\partial x_{k'}} a^{\ts}_{i\hat i}=\sum_{k'=n+1}^{n+m}z^{\ts}_{kk'}\frac{\partial}{\partial x_{k'}} a^{\ts}_{i\hat i}(x_1,\cdots,x_n)=0$ for all $k,i$.} 
\end{itemize}
\end{remark}

We next define the following modified Hessian {matrix $\mathfrak R(x)$}, which is a generalization {of} the Hessian of the negative logarithm of the invariant distribution (i.e., $-\log \pi$). For simplicity of presentation, we always call the modified Hessian matrix as {\em the Hessian matrix}. Some examples of Hessian matrices are presented in Proposition \ref{prop: non deg non rev a}. {The Hessian matrix essentially provides a generalized sub-Riemannian Ricci curvature tensor induced by matrix $a$. We refer to \cite{FL} for more details and its connection to geometry.}

\begin{definition}[Hessian matrix]\label{def: curvature sum}\qi{
Let matrices $a$ and $z$ satisfy Assumption \ref{prop:main condition}. We define a bilinear form associated with SDE \eqref{a}, matrices $a,z$ and a constant $\beta\in\mathbb R$ as below, for a smooth vector field $\mathsf{U}\in C^{\infty}(\hR^{n+m};\hR^{n+m})$,}  
\bea\label{defn: curvature tensor}
\mathfrak{R}(\mathsf{U},\mathsf{U})
&=& (\mathfrak R_a+\mathfrak R_z+\mathfrak R_{\pi})(\mathsf{U},\mathsf{U})-\Lambda_1^{\ts}\Lambda_1-\Lambda_2^{\ts}\Lambda_2+\mathsf D^{\ts}\mathsf D+\mathsf E^{\ts}\mathsf E\nonumber \\
&&+(\beta\mathfrak R_{\mathcal I_a}+(1-\beta) \mathfrak R_{\gamma_a}+\mathfrak R_{\gamma_z})(\mathsf{U},\mathsf{U}).
\eea 
\qi{We define $\mathfrak{R}(x): \mathbb R^{n+m}\rightarrow \mathbb{R}^{(n+m)\times (n+m)}$ as the corresponding matrix function such that }
\bea 
\mathsf U^\ts\mathfrak{R}(x)\mathsf U=\mathfrak{R}(\mathsf U, \mathsf U),
\eea 
\qi{for all vector fields  $\mathsf U$}. The bilinear forms in \eqref{defn: curvature tensor}
 are defined below.  
\bea\label{tensor a}\mathfrak R_{a}(\mathsf{U},\mathsf{U})&=& \sum_{i,k=1}^n a^{\ts}_i\nabla a^{\ts}_i\nabla  a^{\ts}_k \mathsf{U} (a^{\ts}_k\mathsf{U})+\sum_{i,k=1}^n a^{\ts}_i a^{\ts}_i\nabla^2a^{\ts}_k \mathsf{U} (a^{\ts}_k\mathsf{U})\nonumber  \\
	&& -\sum_{i,k=1}^n a^{\ts}_k\nabla a^{\ts}_i\nabla a^{\ts}_i\mathsf{U}(a_k^{\ts}\mathsf{U})-\sum_{i,k=1}^n a^{\ts}_k a^{\ts}_i \nabla^2a^{\ts}_i\mathsf{U}(a_k^{\ts}\mathsf{U})\nonumber \\
&&+\sum_{i=1}^n\sum_{\hat k=1}^{n+m}\Big[ (aa^{\ts} \nabla\log \pi)_{\hat k} \nabla_{\hat k} a^{\ts}_i\mathsf{U}-a_i^{\ts}\nabla (aa^{\ts}\nabla\log \pi)_{\hat k} \mathsf U_{\hat k}\Big]a^{\ts}_i\mathsf{U} \nonumber \\
	&&+\la \nabla a, \sum_{k=1}^n\Big[ a^{\ts}\nabla a^{\ts}_k\mathsf{U}-a^{\ts}_k \nabla a^{\ts}\mathsf{U})\Big] a^{\ts}_k\mathsf{U}\ra -\la  a^{\ts} \nabla^2 a\circ  (a^{\ts}\mathsf{U}),a^{\ts}\mathsf{U}\ra,\nonumber\\
  \label{tenser z}\mathfrak R_{z}(\mathsf{U},\mathsf{U})&=& \sum_{i=1}^n\sum_{k=1}^m a^{\ts}_i\nabla a^{\ts}_i\nabla  z^{\ts}_k \mathsf{U} (z^{\ts}_k\mathsf{U})+\sum_{i,k=1}^n a^{\ts}_i a^{\ts}_i\nabla^2z^{\ts}_k \mathsf{U} (z^{\ts}_k\mathsf{U}) \nonumber \\
	&& -\sum_{i=1}^n\sum_{k=1}^m z^{\ts}_k\nabla a^{\ts}_i\nabla a^{\ts}_i\mathsf{U}(z_k^{\ts}\mathsf{U})-\sum_{i,k=1}^n z^{\ts}_k a^{\ts}_i \nabla^2a^{\ts}_i\mathsf{U}(z_k^{\ts}\mathsf{U})\nonumber \\
&&+\sum_{k=1}^m\sum_{\hat k=1}^{n+m}\Big[ (aa^{\ts} \nabla\log \pi)_{\hat k} \nabla_{\hat k} z^{\ts}_k\mathsf{U}-z_k^{\ts}\nabla (aa^{\ts}\nabla\log \pi)_{\hat k} \mathsf U_{\hat k}\Big]z^{\ts}_k\mathsf{U}\nonumber \\
	&&+\la \nabla a, \sum_{k=1}^m\Big[ a^{\ts}\nabla z^{\ts}_k\mathsf{U}-z^{\ts}_k \nabla a^{\ts}\mathsf{U})\Big] z^{\ts}_k\mathsf{U}\ra -\la  z^{\ts} \nabla^2 a\circ  (a^{\ts}\mathsf{U}),z^{\ts}\mathsf{U}\ra,\nonumber\\
\label{tensor R Psi}\mathfrak{R}_{\pi}(\mathsf{U},\mathsf{U})
	&=&2\sum_{k=1}^m \sum_{i=1}^n\left[\nabla z^{\ts}_{k} z^{\ts}_{k} \nabla a^{\ts}_{i}\mathsf{U} a^{\ts}_{i}\mathsf{U}+z^{\ts}_{k}\nabla z^{\ts}_{k} \nabla a^{\ts}_{i}\mathsf{U} a^{\ts}_{i}\mathsf{U}+z^{\ts}_{k} z^{\ts}_{k} \nabla^2 a^{\ts}_{i} \mathsf{U}   a^{\ts}_{i}\mathsf{U}   \right]\nonumber \\
&&+2\sum_{k=1}^m \sum_{i=1}^n\Big[(z^{\ts}_{k}\nabla a^{\ts}_i\mathsf{U} )^2+ (z^{\ts}\nabla\log\pi)_k \left[ z^{\ts}_{ k}\nabla a^{\ts}_i\mathsf{U} a^{\ts}_i\mathsf{U} \right]\Big] \nonumber\\
&&-2\sum_{j=1}^m\sum_{l=1}^n\left[ \nabla a^{\ts}_{l} a^{\ts}_{l}\nabla  z^{\ts}_{j}\mathsf{U} z^{\ts}_{j}\mathsf{U}+ a^{\ts}_{l}\nabla a^{\ts}_{l}\nabla z^{\ts}_{j}\mathsf{U} z^{\ts}_{j}\mathsf{U}+a^{\ts}_{l}a^{\ts}_{l} \nabla^2z^{\ts}_{j}\mathsf{U} z^{\ts}_{j} \mathsf{U} \right] \nonumber\\
&&-2\sum_{j=1}^m\sum_{l=1}^n \left[ (a^{\ts}_{l}\nabla z^{\ts}_j \mathsf{U})^2  +(a^{\ts}\nabla\log\pi)_l \Big[ a^{\ts}_{l}\nabla z^{\ts}_{j}\mathsf{U} z^{\ts}_{j}\mathsf{U}\Big]  \right], \nonumber
\eea
\bea 
\mathfrak R_{\mathcal I_a}(\mathsf{U},\mathsf{U})
&=&  \la \mathsf{U},\gamma \ra\Big[ \la \nabla a, a^{\ts}\mathsf{U}\ra +\sum_{i=1}^na^{\ts}_i\nabla a^{\ts}_i\mathsf{U}+\la aa^{\ts}\nabla\log\pi,\mathsf{U}\ra\Big]\nonumber-\sum_{\hat i=1}^n\sum_{k=1}^{n+m} \gamma_{k}\nabla_{x_{k}} a^{\ts}_{\hat i}\mathsf{U}( a^{\ts}_{\hat i}\mathsf{U}),\nonumber\\
\mathfrak R_{\gamma_a}(\mathsf{U},\mathsf{U})&=& \frac{1}{2}\sum_{\hat k=1}^{n+m}\gamma_{\hat k} \la \mathsf U,\nabla_{\hat k}(aa^{\ts})\mathsf U\ra -  \la \nabla \gamma \mathsf{U},aa^{\ts}\mathsf{U}\ra_{\mathbb R^{n+m}},\nonumber\\ 
\mathfrak R_{\gamma_z}(\mathsf{U},\mathsf{U})&=& \frac{1}{2}\sum_{\hat k=1}^{n+m}\gamma_{\hat k} \la \mathsf U,\nabla_{\hat k}(zz^{\ts})\mathsf U\ra -  \la \nabla \gamma \mathsf{U},zz^{\ts}\mathsf{U}\ra_{\mathbb R^{n+m}}.\nonumber
\eea
 We define vector \qi{functions $\mathsf D:\hR^{n+m}\rightarrow \mathbb R^{n^2\times 1}$, and $\mathsf E:\hR^{n+m}\rightarrow \mathbb R^{({nm})\times 1}$} as below,
\bea\label{vector D E}
\mathsf D_{ik}= \sum_{\hat i, \hat k=1}^{n+m}a^{\ts}_{i\hat i}\pa_{x_{\hat i}} a^{\ts}_{k\hat k}\mathsf U_{\hat k}, \quad \mathsf E_{ik}=\sum_{\hat i,\hat k=1}^{n+m}a^{\ts}_{i\hat i}\pa_{x_{\hat i}} z^{\ts}_{k\hat k} \mathsf U_{\hat k}.
\eea 
For $\beta\in\mathbb R$, the vector \qi{functions $\Lambda_1: \hR^{n+m}\rightarrow \mathbb R^{n^2\times 1}$ and $\Lambda_2:\hR^{n+m}\rightarrow \mathbb R^{({nm})\times 1}$ are defined as, for $i,l\in\{1,\cdots,n\}$}, 
\beaa 
(\Lambda_1)_{il}&=&\sum_{k=1}^n[\sum_{i'=1}^{n+m} a^{\ts}_{ii'}\lambda^{i'k}_l-\sum_{k'=1}^{n+m}a^{\ts}_{kk'} \lambda^{k'i}_l ]a^{\ts}_k\mathsf U+ \sum_{k=1}^m\Big(\sum_{i'=1}^{n+m}  a^{\ts}_{ii'}\omega^{i'k}_l -\sum_{k'=1}^{n+m} z^{\ts}_{kk'}\lambda^{k'i}_l\Big)z^{\ts}_k\mathsf U \\
&&- \sum_{k=1}^m \sum_{i'=1}^{n+m} a^{\ts}_{ii'} \omega^{i'k}_l  z^{\ts}_k\mathsf U -\frac{\beta}{2} \alpha_l (a^{\ts}_{ i}\mathsf U)+\frac{\beta}{2}   \la \mathsf U,\gamma \ra \mathbf{1}_{\{i=l\}}+\mathsf D_{il},
\eeaa 
and for $i\in \{1,\cdots,n\}$, $l\in\{1,\cdots,m\}$, 
\beaa (\Lambda_2)_{il}
&=&\sum_{k=1}^n[ \sum_{i'=1}^{n+m} a^{\ts}_{ii'} \lambda ^{i'k}_{l+n}-\sum_{k'=1}^{n+m}a^{\ts}_{kk'}\lambda^{k'i}_{l+n}]a^{\ts}_k\mathsf U+\sum_{k=1}^m\Big(\sum_{i'=1}^{n+m}  a^{\ts}_{ii'}\omega^{i'k}_{l+n} -\sum_{k'=1}^{n+m} z^{\ts}_{kk'} \lambda_{l+n}^{k'i}\Big)z^{\ts}_k\mathsf U \\
&&+\sum_{k=1}^n\sum_{k'=1}^{n+m}  z^{\ts}_{lk'} \lambda^{k'k}_i a^{\ts}_{k} \mathsf U+ z^{\ts}_{l}\nabla a^{\ts}_i \mathsf U-\sum_{k=1}^m\sum_{i'=1}^{n+m}a^{\ts}_{ii'} \omega^{i'k}_{l+n}  z^{\ts}_k\mathsf U -a^{\ts}_{i}\nabla z^{\ts}_{l} \mathsf U-\frac{\beta}{2} \alpha_{l+n}(a^{\ts}_{ i}\mathsf U)+\mathsf E_{il}.
\eeaa 
\end{definition}

We last introduce the following Hessian matrix lower bound condition. 
\begin{assum}[Hessian matrix condition]\label{asum2}
For the Hessian matrix $\mathfrak R$ defined in Definition \ref{def: curvature sum}, assume that there exists a constant $\kappa> 0$, such that  
\begin{equation}\label{C}
\mathfrak{R}(x) \succeq  \kappa(a(x)a(x)^{\ts}+z(x)z(x)^{\ts}), \quad\textrm{for all $x\in\mathbb{R}^{n+m}$.}
\end{equation}
{Here ``$\succeq$" represents the order of positive semidefinite matrices.}
\end{assum}

\begin{remark}
    Under Assumption \ref{main condition}, for each indices ${i'}, k, \hat k$, there exist smooth functions $\lambda^{i'k}_{l}$, $\omega^{i'k}_l$ and $\alpha_l$ for $l=1,\cdots,n+m$, such that
\beaa 
\nabla_{i'}a^{\ts}_{k\hat k}&=&\sum_{l=1}^n\lambda^{i'k}_la^{\ts}_{l\hat k}+\sum_{l=1}^{m}\lambda^{i'k}_{l+n} z^{\ts}_{l\hat k},\quad 
 \nabla_{i'}z^{\ts}_{k\hat k}=\sum_{l=1}^n\omega^{i'k}_la^{\ts}_{l\hat k}+\sum_{l=1}^{m}\omega^{i'k}_{l+n} z^{\ts}_{l\hat k}.
 \eeaa
  For a vector field  $\gamma\in\mathbb R^{n+m}$,  we define $\nabla\gamma\in \mathbb R^{(n+m)\times (n+m)}$ with $(\nabla\gamma)_{ij}=\nabla_i\gamma_j$, and denote $\gamma_{\hat k}=\sum_{l=1}^n\alpha_la^{\ts}_{l\hat k}+\sum_{l=1}^m\alpha_{l+n}z^{\ts}_{l\hat k}$.
\end{remark}
{
\begin{remark}
We can verify Assumption \ref{asum2} in several examples, including Langevin dynamics {with the $x$-dependent diffusion matrix} (Section \ref{sec4}). It can also be applied to study three oscillator chains models \cite{EckmannHairer, HairerMattingly} (Section \ref{sec: anharmonic}), where the invariant Gibbs measure has an explicit formula. {Assumption \ref{main condition} and Assumption \ref{asum2}} have been checked for drift-diffusion processes on Lie group-induced sub-Riemannian manifolds. Examples include the Heisenberg group, the displacement group, and the Martinet flat sub-Riemannian structure. See examples in \qi{\cite[Examples 4.1, 4.2, 4.3]{FL2}.} {In addition, we do not assume the iterative symmetry condition between $a$ and $z$, i.e.  $\la a^{\ts}\nabla f, a^{\ts}\nabla |z^{\ts}\nabla f|^2 \ra=\la z^{\ts}\nabla f, z^{\ts}\nabla |a^{\ts}\nabla f|^2 \ra $. We introduce $\mathfrak R_{\pi}$ to handle the situation when the iterative symmetry condition does not hold true.}
\end{remark}

\begin{remark}[Connections with Bakry--{\'E}mery conditions]\label{rmk1}
The Hessian matrix condition in Assumption \ref{asum2} extends the classical Bakry-{\'E}mery condition \cite{BE}. 
\begin{itemize}
\item[(i)] If $\gamma=0$ and $m=0$ {(i.e. $z:=0$)}, then the Hessian matrix condition recovers the Bakry-{\' E}mery condition. 
\item[(ii)] If $\gamma=0$ and $m\neq 0$, then the Hessian matrix condition has been derived in \cite{FL}. 
\item[(iii)] If $\beta=0$ and $m=0$, then the Hessian matrix leads to the Arnold-Carlen-Ju tensor \cite{AC, ACJ};
\item[(iv)] If $a$, $z$ are {constant matrices} and $\beta=0$, then the Hessian matrix leads to the one in Arnold-Erb tensor \cite{AE};
\item[(v)] If $\beta=1$, $m=0$ and $a=\mathbb{I}$, then the Hessian matrix formulates the one proposed in \cite{FL1}. 
\end{itemize}
\end{remark}
}

\subsection{Entropy dissipation}
We are now ready to present the convergence result. Denote 
\begin{equation*}
\mathcal{I}_{a,z}(p\|\pi)=\int \Big\la \nabla \log\frac{p(x)}{\pi(x)}, (a(x)a(x)^{\ts}+z(x)z(x)^{\ts})\nabla\log\frac{p(x)}{\pi(x)}\Big\ra p(x)dx,
\end{equation*}
which measures the difference between functions $p$ and $\pi$. We call functional $\mathcal{I}_{a,z}$ the {\em weighted relative Fisher information functional}. 

In the next theorem, we establish the convergence of SDEs from the dissipation estimation of functional $\mathcal{I}_{a,z}$. 
\begin{theorem}[Weighted Fisher information dissipation]\label{thm1}
Assume that Assumption \ref{prop:main condition} and Assumption \ref{asum2} hold. Assume {the initial distribution $p_0$ satisfies $\mathcal{I}_{a,z}(p_0\|\pi)<+\infty$.} Then
\begin{equation*}
\mathcal{I}_{a,z}(p \|\pi)\leq  e^{-2\kappa t}\mathcal{I}_{a,z}(p_0\|\pi),
\end{equation*}
where $p=p(t,x)$ is the solution of Fokker-Planck equation \eqref{FPE1}. 
\end{theorem}
Using Theorem \ref{thm1}, we demonstrate several convergence results for other functionals, such as 
{Kullback-Leibler} (KL) divergence and $L_1$ distance.  
\begin{corollary}[Functionals decay]\label{col3}
\qi{Assume that Assumption \ref{asum2} holds true with a constant $\kappa>0$.} Then \qi{we have the following decay results}. 
\begin{itemize}
\item[(i)] {KL divergence decay:}
\begin{equation*}
\mathrm{D}_{\mathrm{KL}}(p \|\pi)\leq  \frac{1}{2\kappa}e^{-2\kappa t}\mathcal{I}_{a,z}(p_0\|\pi),
\end{equation*}
where
\begin{equation*}
    \mathrm{D}_{\mathrm{KL}}(p \|\pi):=\int p(t,x)\log\frac{p(t,x)}{\pi(x)}dx,
\end{equation*}
is the KL divergence between functions $p $ and $\pi$.
\item[(ii)]{$L_1$ distance decay:}
\begin{equation*}
\int |p(t,x)-\pi(x)| dx\leq \sqrt{\frac{1}{\kappa}\mathcal{I}_{a,z}(p_0\|\pi) }e^{-\kappa t}.
\end{equation*}
\end{itemize}
\end{corollary}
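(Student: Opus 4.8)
The plan is to derive both decay estimates from Theorem~\ref{thm1} by combining an entropy--dissipation identity with the Csisz\'ar--Kullback--Pinsker inequality; no new structural computation is needed beyond what is already in the Decomposition Proposition and Theorem~\ref{thm1}.

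For part (i), I would first differentiate the KL divergence along the reformulated Fokker--Planck flow \eqref{FPE1}. Since $\frac{d}{dt}\int p\,dx=0$, one gets
\[
\frac{d}{dt}\mathrm{D}_{\mathrm{KL}}(p\|\pi)=\int \partial_t p\,\log\frac{p}{\pi}\,dx
=\int\Big[\nabla\cdot\big(p\,aa^{\ts}\nabla\log\tfrac{p}{\pi}\big)+\nabla\cdot(p\gamma)\Big]\log\frac{p}{\pi}\,dx .
\]
Integrating by parts, the first term becomes $-\int\big(\nabla\log\frac{p}{\pi},\,aa^{\ts}\nabla\log\frac{p}{\pi}\big)p\,dx$. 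For the irreversible term, writing $h=p/\pi$ so that $\nabla\log\frac{p}{\pi}=\nabla h/h$, it equals $\int \pi\,\gamma\cdot\nabla h\,dx=-\int h\,\nabla\cdot(\pi\gamma)\,dx=0$ by the Decomposition Proposition. Hence
\[
\frac{d}{dt}\mathrm{D}_{\mathrm{KL}}(p\|\pi)=-\int\big(\nabla\log\tfrac{p}{\pi},\,aa^{\ts}\nabla\log\tfrac{p}{\pi}\big)p\,dx\ \geq\ -\,\mathcal{I}_{a,z}(p\|\pi),
\]
since $zz^{\ts}\succeq 0$. Inserting Theorem~\ref{thm1} gives $\frac{d}{dt}\mathrm{D}_{\mathrm{KL}}(p\|\pi)\geq -e^{-2\lambda t}\mathcal{I}_{a,z}(p_0\|\pi)$. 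Integrating this differential inequality on $[t,\infty)$ and using $\mathrm{D}_{\mathrm{KL}}(p(t,\cdot)\|\pi)\to 0$ as $t\to\infty$ (which follows from the standing ergodicity assumption on \eqref{a}) yields $\mathrm{D}_{\mathrm{KL}}(p(t,\cdot)\|\pi)\leq \frac{1}{2\lambda}e^{-2\lambda t}\mathcal{I}_{a,z}(p_0\|\pi)$. For part (ii), I would invoke the Csisz\'ar--Kullback--Pinsker inequality $\big(\int\|p-\pi\|\,dx\big)^2\leq 2\,\mathrm{D}_{\mathrm{KL}}(p\|\pi)$ together with part (i), obtaining $\big(\int\|p(t,x)-\pi(x)\|\,dx\big)^2\leq \frac{1}{\lambda}e^{-2\lambda t}\mathcal{I}_{a,z}(p_0\|\pi)$; taking square roots gives the stated bound.

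The main obstacle is not the algebra but the analytic justification of the entropy--dissipation identity: one must ensure $\mathrm{D}_{\mathrm{KL}}(p_0\|\pi)<\infty$, that $t\mapsto\mathrm{D}_{\mathrm{KL}}(p(t,\cdot)\|\pi)$ is absolutely continuous with the computed derivative (enough integrability of $p$ and $\nabla\log\frac{p}{\pi}$ and enough decay at spatial infinity that all boundary terms in the integrations by parts vanish), and that $\mathrm{D}_{\mathrm{KL}}(p(t,\cdot)\|\pi)\to 0$. Under the paper's smoothness and weak H\"ormander hypotheses these are standard; alternatively one first runs the argument for regularized or truncated initial data and passes to the limit using lower semicontinuity of relative entropy. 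Once these points are in place, parts (i) and (ii) follow immediately from Theorem~\ref{thm1} as above.
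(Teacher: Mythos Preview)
Your proposal is correct and follows essentially the same approach as the paper: compute the entropy--dissipation identity $\frac{d}{dt}\mathrm{D}_{\mathrm{KL}}(p\|\pi)=-\mathcal{I}_a(p\|\pi)$ (the paper records this as Proposition~\ref{prop4}), bound $\mathcal{I}_a\leq\mathcal{I}_{a,z}$, integrate from $t$ to $\infty$ using $p_\infty=\pi$, and finish with Csisz\'ar--Kullback--Pinsker. The only cosmetic difference is that the paper first writes the integration as a log-Sobolev-type inequality $\mathrm{D}_{\mathrm{KL}}(p\|\pi)\leq\frac{1}{2\lambda}\mathcal{I}_{a,z}(p\|\pi)$ and then applies Theorem~\ref{thm1}, whereas you substitute Theorem~\ref{thm1} before integrating; the two are equivalent.
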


{
\begin{remark}
Although the proposed convergence study is presented in the spatial domain $\mathbb{R}^{n+m}$, one can also prove the same main result on a torus. In other words, suppose the Fokker-Planck equation \eqref{FPE} is defined on a torus (periodic boundary conditions). All convergence proofs still hold. In the proof, we apply the integration by parts formulas. There are no boundary terms for $x\in \mathbb{R}^{n+m}$ or $x\in\mathbb{T}^{n+m}$, where $\mathbb{T}^{n+m}$ denotes a $n+m$ dimensional torus. Thus, all derivations in the proofs of Theorem \ref{thm1} and Corollary \ref{col3} are the same for $x\in \mathbb{R}^{n+m}$ or $x\in\mathbb{T}^{n+m}$. 
\end{remark}

}

\begin{remark}[Perturbed gradient flows in sub-Riemannian density manifold]
We remark that equation \eqref{FPE1} can be viewed as a perturbed gradient flow in probability density space embedded with the sub-Riemannian Wasserstein metric \cite{FL, FL1}. See related studies for gradient flows in probability density space in \cite{AGS}. The energy functional is chosen as the KL divergence, while a perturbation is given by the vector field $\gamma$. Using this viewpoint, our decomposition allows studying the dynamical behavior of SDE \eqref{a} in three components. Shortly, we show that the Gamma two operator studies the reversible component in SDE \eqref{a}; the generalized Gamma $z$ operator overcomes the degenerate diffusion matrix function $a$; and the irreversible Gamma operator handles the non-gradient drift vector field $b$. {See precise definitions of these Gamma operators at Definition \ref{defn:tilde gamma 2 znew} in  Section \ref{sec3:1}. }  
\end{remark}

\begin{remark}[Optimal convergence rate]
We note that the convergence \qi{rate} $\kappa$ depends on the choice of parameter $\beta$ and the selection of matrix $z(x)$. { In section \ref{sec4}, we demonstrate some examples of $\beta$ and $z$, which guarantee the convergence rate for the underdamped Langevin dynamics. We leave the optimal choice of these parameters for computing the sharp convergence rate in future work.} 
\end{remark}

\subsection{Example of Hessian matrices}
This subsection provides an example of the Hessian matrix. 
Consider a stochastic process:
\begin{equation}\label{example SDE}
dX_t=\Big(-a(X_t)a(X_t)^{\ts}\nabla V(X_t)+\Big(\sum_{j=1}^n \frac{\partial}{\partial X_j}(a(X_t)a(X_t)^{\ts})_{ij}\Big)_{1\le i\le n}-\gamma(X_t)\Big)dt+\sqrt{2} a(X_t) dB_t, 
\end{equation}
where $X_t\in \mathbb{R}^n$, $B_t$ is a standard $n$ dimensional Brownian motion, $V\in C^2(\mathbb{R}^n; \mathbb{R})$ is a potential function satisfying $\int e^{-V(y)}dy<\infty$, $\gamma\in C^{1}(\mathbb{R}^n;\mathbb{R}^n)$ is a vector field satisfying 
\begin{equation*}
  \nabla\cdot(e^{-V(x)}\gamma(x))=0,   
\end{equation*}
and the diffusion matrix $a$ is a smooth diagonal matrix, which satisfies $a_{ii}(x)=a_{ii}(x_i)>0$, for all $x_i\in \mathbb{R}$, $i=1,\cdots,n$, with   
\bea \label{non degenerate a}
a(x)=\begin{pmatrix}a_{11}(x_1)&0&\cdots&0\\
0&a_{22}(x_2)&\cdots&0\\
0 &0&\cdots&0\\
0&\cdots&0&a_{nn}(x_n)
\end{pmatrix}.
\eea 
The solution of equation \eqref{example SDE} is with a non-reversible/non-gradient drift direction ($\gamma$ can be nonzero) and a non-degenerate ($a>0$) process. \qi{Its} invariant density satisfies 
\begin{equation*}
   \pi(x)=\frac{1}{Z}e^{-V(x)}, \quad Z=\int e^{-V(y)}dy.  
\end{equation*}
In this case, we derive the Hessian matrix of SDE \eqref{example SDE}. The proof is provided in {the} Appendix. 

\begin{proposition}\label{prop: non deg non rev a}
    The \qi{Hessian} matrix $\mathfrak R$ for SDE \eqref{example SDE} has the following form, for a constant $\beta\in\hR$,
    \beaa 
    \mathfrak R(x)= (\mathfrak R_a +\beta \mathfrak R_{\mathcal I_a}+(1-\beta) \mathfrak R_{\gamma_a})(x),
    \eeaa 
    where 
    \bea \label{ricci curvature}
\begin{cases}
\mathfrak R_{a,ii}&= a_{ii}^3\partial_{x_i}a_{ii}\partial_{x_i}V +a_{ii}^4\partial^2_{x_ix_i}V-a_{ii}^3\partial_{x_ix_i}^2a_{ii}, \hfill  i=1,\cdots,n;\\
\mathfrak R_{a,ij}&=a_{ii}^2a_{jj}^2\partial^2_{x_ix_j}V, \hfill i,j=1,\cdots,n,  i\neq j.\\
\mathfrak R_{\mathcal I_a,ii}&= \gamma_i[a^{}_{ii}\pa_{x_i}a^{}_{ii}-(a^{}_{ii})^2\pa_{x_i}V], \hfill  i=1,\cdots,n;\\ 
\mathfrak R_{\mathcal I_a,ij}&=\frac{1}{2}[ \gamma_j(2a^{}_{ii}\pa_{x_i}a^{}_{ii}-(a^{}_{ii})^2\pa_{x_i}V)+\gamma_i(2a^{}_{jj}\pa_{x_j}a^{}_{jj}-(a^{}_{jj})^2\pa_{x_j}V)],\hfill  i,j=1,\cdots,n, i\neq j;\\
\mathfrak R_{\gamma_a,ii}&=\gamma_ia^{}_{ii}\pa_{x_i}a_{ii}-\pa_{x_i}\gamma_i (a^{}_{ii} )^2,  \hfill i=1,\cdots,n;\\
\mathfrak R_{\gamma_a,ij}&=-\frac{1}{2}[\pa_{x_i}\gamma_j (a^{}_{jj})^2+ \pa_{x_j}\gamma_i (a^{}_{ii})^2 ], \hfill i,j=1,\cdots,n, i\neq j.  
\end{cases}
\eea 
\end{proposition}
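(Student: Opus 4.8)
The plan is to specialize the general Hessian tensor $\mathfrak{R}(\mathsf U,\mathsf U)$ from Definition \ref{def: curvature sum} to the setting of SDE \eqref{example SDE}, where the dimension $m=0$ (so the auxiliary matrix $z$ is absent), the diffusion matrix $a$ is diagonal with $a_{ii}=a_{ii}(x_i)$, and $\log\pi=-V+\text{const}$. Because $z$ does not appear, all terms $\mathfrak R_z$, $\mathfrak R_\pi$, $\mathfrak R_{\gamma_z}$, $\mathsf E$, and $\Lambda_2$ vanish identically, and we are left with $\mathfrak R = \mathfrak R_a - \Lambda_1^{\ts}\Lambda_1 + \mathsf D^{\ts}\mathsf D + \beta\mathfrak R_{\mathcal I_a} + (1-\beta)\mathfrak R_{\gamma_a}$. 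The first task is therefore to show that $\Lambda_1^{\ts}\Lambda_1 = \mathsf D^{\ts}\mathsf D$ for this example, so that these two contributions cancel and the asserted formula $\mathfrak R = \mathfrak R_a + \beta\mathfrak R_{\mathcal I_a} + (1-\beta)\mathfrak R_{\gamma_a}$ holds.

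For the cancellation, I would unwind the definition of $(\Lambda_1)_{il}$: with $z$ absent, the $\omega$-terms and the $z^{\ts}_k\mathsf U$ terms drop, so $(\Lambda_1)_{il} = \sum_{k}\big(\sum_{i'} a^{\ts}_{ii'}\lambda^{i'k}_l - \sum_{k'} a^{\ts}_{kk'}\lambda^{k'i}_l\big)a^{\ts}_k\mathsf U - \tfrac{\beta}{2}\alpha_l(a^{\ts}_i\mathsf U) + \tfrac{\beta}{2}\langle\mathsf U,\gamma\rangle\mathbf 1_{\{i=l\}} + \mathsf D_{il}$. Since $a$ is diagonal with entries depending only on the matching coordinate, $a^{\ts}_{ii'} = a_{ii}\delta_{ii'}$ and $\partial_{x_{k'}} a^{\ts}_{i\hat i} = \partial_{x_i}a_{ii}\,\delta_{i\hat i}\delta_{ik'}$, so the structure functions $\lambda^{i'k}_l$ of Assumption \ref{prop:main condition} are forced: $\lambda^{i'k}_l = \frac{\partial_{x_k}a_{kk}}{a_{ll}}\delta_{i'k}\delta_{kl}$ (nonzero only when $i'=k=l$), and similarly the $\alpha_l$ decomposing $\gamma$ are $\alpha_l = \gamma_l/a_{ll}$. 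Plugging these in, each of the two $\lambda$-sums in $(\Lambda_1)_{il}$ collapses to a single term; I expect the $\beta\alpha_l$ term and the $\beta\langle\mathsf U,\gamma\rangle$ term to combine with the $\lambda$-sums so that $(\Lambda_1)_{il}$ reduces exactly to $\mathsf D_{il}$ plus terms that vanish by the diagonal structure — making $\Lambda_1 = \mathsf D$, hence $\Lambda_1^{\ts}\Lambda_1 - \mathsf D^{\ts}\mathsf D = 0$. (One should double-check whether the $\beta$-dependent pieces genuinely cancel or whether they are instead absorbed consistently into the split between $\mathfrak R_{\mathcal I_a}$ and $\mathfrak R_{\gamma_a}$; this bookkeeping is the delicate point.)

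The remaining work is a direct substitution into the three surviving tensors. For $\mathfrak R_a(\mathsf U,\mathsf U)$ in \eqref{tensor a}: with $a$ diagonal and coordinate-separated, the third-order derivative terms $a^{\ts}_i\nabla a^{\ts}_i\nabla a^{\ts}_k\mathsf U$ and $a^{\ts}_i a^{\ts}_i\nabla^2 a^{\ts}_k\mathsf U$ vanish unless $i=k$, and $\nabla a$ applied off the matching coordinate vanishes, so the fourth and fifth lines of \eqref{tensor a} drop out; what survives on the diagonal is $a_{ii}^4\partial^2_{x_ix_i}V + a_{ii}^3\partial_{x_i}a_{ii}\partial_{x_i}V - a_{ii}^3\partial^2_{x_ix_i}a_{ii}$ coming from the $\nabla\log\pi = -\nabla V$ line together with the curvature-of-$a$ terms, and off-diagonal $a_{ii}^2a_{jj}^2\partial^2_{x_ix_j}V$ from the Hessian of $V$; this matches \eqref{ricci curvature}. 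For $\mathfrak R_{\mathcal I_a}$ and $\mathfrak R_{\gamma_a}$, one substitutes $\gamma_{\hat k}$, uses $aa^{\ts} = \mathrm{diag}(a_{ii}^2)$, $\partial_{\hat k}(aa^{\ts})_{ij} = 2a_{ii}\partial_{x_i}a_{ii}\delta_{ij}\delta_{i\hat k}$, and $(\nabla\gamma)_{ij} = \partial_{x_i}\gamma_j$, reading off the stated diagonal and off-diagonal entries; the factor $\tfrac12$ and the symmetrization in the $i\neq j$ formulas come precisely from the symmetric part $\tfrac12(\mathsf U^{\ts}\mathfrak R\mathsf U + \text{transpose})$ implicit in recording $\mathfrak R(x)$ as a matrix from the quadratic form. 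The main obstacle is purely organizational: correctly tracking the index gymnastics in $\mathfrak R_a$ and $(\Lambda_1)_{il}$ so that the $\beta$-dependent terms land consistently and the $\Lambda_1 = \mathsf D$ cancellation is airtight; there is no analytic difficulty once the diagonal structure is exploited.
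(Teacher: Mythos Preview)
Your plan matches the paper's proof: specialize Definition~\ref{def: curvature sum} to $m=0$, argue that $-\Lambda_1^{\ts}\Lambda_1+\mathsf D^{\ts}\mathsf D=0$, and then compute $\mathfrak R_a$, $\mathfrak R_{\mathcal I_a}$, $\mathfrak R_{\gamma_a}$ by direct substitution using the diagonal structure of $a$ and $\log\pi=-V$. The only real difference is in the cancellation step: instead of computing the structure coefficients $\lambda^{i'k}_l$ and reassembling $(\Lambda_1)_{il}$, the paper works directly with the vector $\mathsf C$ of Definition~\ref{definition: F G V} and observes that $\mathsf C_{\hat i\hat k}=0$ identically (because $a^{\ts}_i\nabla a^{\ts}_{k\hat k}=0$ for $i\neq k$ and the $i=k$ contributions cancel by antisymmetry), then reads off $\Lambda_1=\mathsf D$ from the matching condition $(\mathsf Q^{\ts}\Lambda_1)^{\ts}\mathsf X=(\mathsf C+\mathsf Q^{\ts}\mathsf D)^{\ts}\mathsf X$; this is a shorter route to the same conclusion. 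Your flagged concern about the $\beta$-dependent pieces $-\tfrac{\beta}{2}\alpha_l(a^{\ts}_i\mathsf U)+\tfrac{\beta}{2}\langle\mathsf U,\gamma\rangle\mathbf 1_{\{i=l\}}$ in $\Lambda_1$ is legitimate: the paper's own proof writes the $\Lambda_1$-condition \emph{without} the $\beta\mathsf V^a$ term and so does not track those pieces either, so you are not missing anything the paper supplies.
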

Proposition \ref{prop: non deg non rev a} can be used to compare different tensors in \qi{the} literature; see Remark \ref{rmk1}. For example, we let $n=1$ and $m=0$. In this case, the Hessian matrix is a scalar, which satisfies 
\begin{equation*}
   \mathfrak R= a^3a'V'' +a^4V''-a^3a''+\beta\gamma(aa'-a^2V')+(1-\beta)(\gamma aa'-a^2\gamma'), 
\end{equation*}
Here $'$, $''$ represent the first and second-order derivatives w.r.t. $x$. 

\qi{Based on the above representation of $\mathfrak R$, we note that} $\mathfrak R$ is a modified Hessian matrix \qi{of $V$ through the parameter $\beta$ and the vector field $\gamma$}. Several examples are \qi{given} below:
\begin{itemize}
\item If $a=1$ and $\gamma=0$, then $\mathfrak{R}=V''(x)$;
\item If $a=1$ and $\gamma\neq 0$, $\beta=0$, then $\mathfrak{R}=V''(x)-\gamma'(x)$;
\item If $a=1$ and $\gamma\neq 0$, $\beta=1$, then $\mathfrak{R}=V''(x)-\gamma(x) V'(x)$;
\item If $a=1$ and $\gamma\neq 0$, $\beta\in\mathbb{R}$, then $\mathfrak{R}=V''(x)-\beta\gamma(x)V'(x)-(1-\beta)\gamma'(x)$. 
\end{itemize}
\qi{The positive lower bound of the above modified Hessian matrix} determines \qi{the} convergence behavior of one dimensional SDE \eqref{example SDE}.
\section{Example I: underdamped Langevin dynamics}\label{sec4}
In this section, we apply Theorem \ref{thm1} to prove exponential convergence results for variable coefficient irreversible(non-gradient drift) degenerate SDEs. 

Consider an underdamped Langevin dynamics with variable diffusion coefficients:
\begin{equation}\label{SDE}
\left\{\begin{aligned}
       dx_t=&v_t dt \\
   dv_t=&(- r(x_t) v_t-\nabla_xU(x_t))dt+\sqrt{2 r(x_t)}dB_t,
\end{aligned}\right.
\end{equation}
where $(x_t, v_t)\in\mathbb{R}^2$ is a two dimensional stochastic process, $U\in C^2(\mathbb{R}^1)$ is 
a Lipschitz potential function with $\int e^{-U(x)}dx<+\infty$, $B_t$ is a standard Brownian motion in $\mathbb{R}$, and $r\in\mathbb{R}_+$ is a positive smooth Lipschitz function. Equation \eqref{SDE} often arises in molecular dynamics and Bayesian sampling algorithms; see motivations in \cite{Sachs}. We first check the following facts about SDE \eqref{SDE}. 
\begin{proposition}
An invariant distribution of SDE \eqref{SDE} satisfies 
\bea \label{invariant}
\pi(x,v) = \frac{1}{Z}e^{-H(x,v)},
\eea 
where $H(x,v)=\frac{\|v\|^2}{2}+U(x)$ and $Z=\int_{\mathbb{R}^2} e^{-H(x,v)} dxdv<+\infty$ is a normalization constant. 
\end{proposition}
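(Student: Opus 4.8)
The plan is to verify directly that $\pi(x,v)=\frac{1}{Z}e^{-H(x,v)}$ with $H(x,v)=\frac{\|v\|^2}{2}+U(x)$ is a stationary solution of the Fokker--Planck equation associated to SDE \eqref{SDE}, and that the normalization $Z$ is finite. First I would write out the Fokker--Planck operator for \eqref{SDE}: with state variable $(x,v)\in\mathbb{R}^2$, drift $b(x,v)=(v,\,-r(x)v-\nabla_x U(x))$ and diffusion matrix $a(x,v)=\begin{pmatrix}0\\\sqrt{2r(x)}\end{pmatrix}$ (so that $aa^{\ts}$ has the single nonzero entry $2r(x)$ in the $(v,v)$ slot, consistent with the $\sqrt2$ normalization in \eqref{a}). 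The stationarity equation to check is
\begin{equation*}
-\nabla_{(x,v)}\cdot\big(\pi\, b\big)+\partial_{vv}^2\big(r(x)\,\pi\big)=0.
\end{equation*}

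Next I would split the verification into the ``Hamiltonian/transport'' part and the ``fluctuation--dissipation'' part. For the transport part, note $\partial_x H=\nabla_x U(x)$ and $\partial_v H=v$, so $\nabla H$ is orthogonal to the Hamiltonian vector field $(\partial_v H,-\partial_x H)=(v,-\nabla_x U(x))$; hence $(v,-\nabla_x U)\cdot\nabla\pi=0$, and since this vector field is also divergence-free, $\nabla\cdot\big(\pi(v,-\nabla_x U)\big)=0$. It remains to handle the friction term $-\partial_v\big(-r(x)v\,\pi\big)=\partial_v\big(r(x)v\,\pi\big)$ against the second-order term $\partial_{vv}^2(r(x)\pi)$. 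Since $r$ depends only on $x$, $\partial_{vv}^2(r\pi)=r\,\partial_{vv}^2\pi$ and $\partial_v(rv\pi)=r\pi+rv\,\partial_v\pi=r\pi-rv^2\pi$ using $\partial_v\pi=-v\pi$; meanwhile $\partial_{vv}^2\pi=(v^2-1)\pi$, so $r\,\partial_{vv}^2\pi=r(v^2-1)\pi$, and the two contributions cancel. This confirms the stationarity identity. Finiteness of $Z$ follows from $Z=\big(\int_{\mathbb{R}}e^{-U(x)}dx\big)\big(\int_{\mathbb{R}}e^{-v^2/2}dv\big)=\sqrt{2\pi}\int_{\mathbb{R}}e^{-U(x)}dx<\infty$ by the standing hypothesis $\int e^{-U}dx<\infty$.

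I would also remark that uniqueness of the invariant density (only ``an'' invariant distribution is claimed in the statement, but it is worth noting) follows from the weak H\"ormander condition assumed in Section~\ref{sec2}: the vector fields here are $a_1=\sqrt{2r(x)}\,\partial_v$ and $b=v\partial_x-(r(x)v+\nabla_x U(x))\partial_v$, and the bracket $[a_1,b]$ produces a $\partial_x$ component $\sqrt{2r(x)}\,\partial_x$, so $\{a_1,[a_1,b]\}$ already spans $\mathbb{R}^2$ at every point (using $r>0$); thus the associated diffusion has a smooth positive density and the invariant measure is unique.

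The computation here is entirely routine, so there is no real obstacle; the only mild care needed is matching conventions — specifically tracking the factor $2$ in $aa^{\ts}=2r(x)$ coming from the $\sqrt2$ in \eqref{a} versus the $\sqrt{2r(x_t)}$ in \eqref{SDE}, and keeping the sign of the friction drift straight so that the dissipation term and the diffusion term cancel rather than add. I would present the proof as the two-line cancellation above, preceded by the explicit form of the Fokker--Planck operator.
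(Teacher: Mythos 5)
Your proof is correct and follows essentially the same route as the paper: a direct verification of the stationary Fokker--Planck equation, splitting it into the Hamiltonian transport part (which vanishes because the field $(v,-\nabla_xU)$ is divergence-free and orthogonal to $\nabla H$) and the friction--diffusion part (which cancels by the explicit $v$-derivatives, equivalently the paper's identity $r\nabla_v\cdot(\pi\nabla_v\log\frac{\pi}{e^{-v^2/2}})=0$). Your added checks of $Z<\infty$ and of uniqueness via the H\"ormander bracket are correct extras the paper omits.
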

\begin{proof}
Denote the drift vector field and the diffusion \qi{matrix} as
\bea\label{variable temp underdamped}
b(x,v)=\begin{pmatrix}
	v\\
	-r(x) v-\nabla_xU(x)
\end{pmatrix}, \quad a=\begin{pmatrix}
	0\\
	\sqrt{r(x)}
\end{pmatrix}.
\eea
We need to show that $-\nabla \cdot(\pi(x,v) b(x,v))+\nabla^2_{vv}(r(x)\pi(x,v))=0$. 
The above equality can be formulated into
\begin{equation*}
-\pi \nabla \cdot\begin{pmatrix}v\\-\nabla_xU\end{pmatrix}-\begin{pmatrix}\nabla_x\pi \\ \nabla_v\pi\end{pmatrix}\cdot \begin{pmatrix}v \\ -\nabla_x U\end{pmatrix} + r\nabla_v\cdot(\pi v)+ r \nabla^2_{vv}\pi=0.
\end{equation*}
We observe that $ r\nabla_v\cdot(\pi v)+r\nabla^2_{vv}\pi=r\nabla_v\cdot(\pi\nabla_v\log\frac{\pi}{e^{-\frac{v^2}{2}}})=0$. 
And we know that  
\begin{equation*}
\nabla \cdot\begin{pmatrix}v\\-\nabla_xU\end{pmatrix}=0,\quad\nabla_x\pi\cdot v+\nabla_v\pi\cdot (-\nabla_x U)=0. 
\end{equation*}
This finishes the proof. \qed
\end{proof}

We next prove the convergence result for SDE \eqref{SDE}. 

\noindent{\textbf{Notations:} Denote $X(t)=(x(t),v(t))^{\ts}$. 
Write a vector field $\gamma\in\mathbb{R}^2$ as
\beaa
\gamma(x,v)
=&\begin{pmatrix}0\\
-r v
\end{pmatrix}-\begin{pmatrix}v\\
-r v-\nabla_xU
\end{pmatrix}
=\begin{pmatrix}-v\\
\nabla_x U
\end{pmatrix}.
\eeaa
Consider a constant vector $z=\begin{pmatrix} z_1 \\ z_2\end{pmatrix}$. 
\begin{proposition}\label{thm: constant z}
For any constant $\beta\in\mathbb R$, define a matrix function $\mathfrak{R}(x)\in\mathbb{R}^{2\times 2}$ as
\beaa 
\mathfrak{R}(x)&=&  \mathfrak R_a+\mathfrak R_z+\mathfrak R_{\pi}-\mathfrak M_{\Lambda}+\beta\mathfrak R_{\mathcal I_a} +(1-\beta) \mathfrak R_{\gamma_a}+
\mathfrak R_{\gamma_z},
\eeaa
where
\beaa
\mathfrak R_a&=&\begin{pmatrix}
0&0\\
0& -\frac{\partial^2  \log\pi }{\partial v^2} |a^{\ts}_{12}|^4
\end{pmatrix},\quad \mathfrak{R}_{\pi}
	=\begin{pmatrix}
0&0\\
0& C_{\pi}
\end{pmatrix}, \\
\mathfrak R_z&=&\frac{1}{2}\Big[  \begin{pmatrix}
0\\
-z^{\ts}_1\nabla((a^{\ts}_{12})^2 \frac{\partial  \log\pi }{\partial v})
\end{pmatrix} z^{\ts}_1+z_1\begin{pmatrix}
0& -z^{\ts}_1\nabla((a^{\ts}_{12})^2 \frac{\partial  \log\pi }{\partial v})
\end{pmatrix} \Big],\\ 
\mathfrak R_{\mathcal I_a}&=&\frac{1}{2}\Big[ \gamma (aa^{\ts}\nabla\log\pi)^{\ts}+ (aa^{\ts}\nabla\log\pi)\gamma^{\ts}  \Big] - \begin{pmatrix}
0&0\\
0& \gamma_1\frac{\partial }{\partial x}a^{\ts}_{12}a^{\ts}_{12}
\end{pmatrix},\\ 
\mathfrak R_{\gamma_a}&=&{\frac{1}{2}}\gamma_1\nabla_1(aa^{\ts})-\frac{1}{2}\Big[ (\nabla \gamma)^{\ts}aa^{\ts}+aa^{\ts}\nabla\gamma \Big],\\ 
\mathfrak R_{\gamma_z}&=&-\frac{1}{2}\Big[(\nabla \gamma)^{\ts}zz^{\ts}+zz^{\ts}\nabla\gamma \Big],
\quad \mathfrak M_{\Lambda}=\frac{1}{(a^{\ts}_{12})^2} \mathsf K^{\ts}(aa^{\ts}+zz^{\ts})^{-1}\mathsf K,
\eeaa
with
\beaa\label{C pi}
C_\pi&=&2\Big[z^{\ts}_{1} z^{\ts}_{1} \nabla^2 a^{\ts}_{12}  a^{\ts}_{12}+(z^{\ts}_{1}\nabla a^{\ts}_{12} )^2+ (z^{\ts}_1\nabla\log\pi) [ z^{\ts}_{ 1}\nabla a^{\ts}_{12} a^{\ts}_{12}]\Big],\\
\mathsf K&=&\begin{pmatrix} \mathsf 0& 2z_1^2\pa_x[a^{\ts}_{12}]a^{\ts}_{12}-\frac{1}{2}\beta\gamma_1(a^{\ts}_{12})^2\\
-z_1^2\pa_x[a^{\ts}_{12}]a^{\ts}_{12}+\frac{1}{2}\beta \gamma_1(a^{\ts}_{12})^2&z_1z_2\pa_x[a^{\ts}_{12}]a^{\ts}_{12}\end{pmatrix}.
\eeaa 
If there exists a constant $\kappa>0$, such that 
\begin{equation*}
\mathfrak{R}(x)\succeq \kappa (aa^{\ts}+zz^{\ts}), 
\end{equation*}
then the entropy dissipation results in Theorem \ref{thm1} and Corollary \ref{col3} hold. In other words, 
\begin{equation*}
   { \int |p(t,x,v)-\pi(x,v)| dxdv=O(e^{-\kappa t}). }
\end{equation*}
\end{proposition}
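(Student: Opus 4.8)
The plan is to derive Proposition~\ref{thm: constant z} from Theorem~\ref{thm1} by specializing Definition~\ref{def: curvature sum} to the data of SDE~\eqref{SDE}. Here $n=m=1$, so $n+m=2$; the diffusion is $a=(0,\sqrt{r(x)})^{\ts}$, i.e.\ the single row $a^{\ts}_1=(0,a^{\ts}_{12})$ with $a^{\ts}_{12}=\sqrt{r(x)}$ a function of $x$ alone; $z=(z_1,z_2)^{\ts}$ is a constant column; and $\gamma=(-v,\nabla_xU)^{\ts}$ as computed above, while $aa^{\ts}=\mathrm{diag}(0,r)$ and $\nabla\log\pi=(-\nabla_xU,-v)$. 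It then suffices to: (i) verify Assumption~\ref{prop:main condition}; (ii) identify the decomposition coefficients $\lambda^{i'k}_{l},\omega^{i'k}_{l},\alpha_l$ of Definition~\ref{def: curvature sum}; (iii) evaluate each block $\mathfrak R_a,\mathfrak R_z,\mathfrak R_\pi,\mathfrak R_{\mathcal I_a},\mathfrak R_{\gamma_a},\mathfrak R_{\gamma_z}$ together with the correction $-\Lambda_1^{\ts}\Lambda_1-\Lambda_2^{\ts}\Lambda_2+\mathsf D^{\ts}\mathsf D+\mathsf E^{\ts}\mathsf E$, and show these assemble into the displayed $\mathfrak R$; and (iv) apply Theorem~\ref{thm1} and Corollary~\ref{col3}(ii) to turn $\mathfrak R\succeq\lambda(aa^{\ts}+zz^{\ts})$ into the stated $L_1$ decay.

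For (i)--(ii): since $a^{\ts}_{11}=0$ and $a^{\ts}_{12}$ depends only on $x$, one computes $z^{\ts}_1\nabla a^{\ts}_1=(0,\,z_1\partial_x a^{\ts}_{12})=\bigl(z_1\partial_x\log a^{\ts}_{12}\bigr)a^{\ts}_1\in\mathrm{Span}\{a^{\ts}_1\}$, so Assumption~\ref{prop:main condition} holds. The coefficients $\lambda^{i'k}_{l}$ are then read off by expanding $\nabla_{i'}a^{\ts}_{1}$ in $\{a^{\ts}_1,z^{\ts}_1\}$; because $z$ is constant one has $\omega^{i'k}_l\equiv0$; and $\alpha_1,\alpha_2$ are obtained by solving $\gamma=\alpha_1 a^{\ts}_1+\alpha_2 z^{\ts}_1$, which is uniquely solvable since $\{a^{\ts}_1,z^{\ts}_1\}$ spans $\hR^2$ (this forces $z_1\neq0$, equivalently $\det(aa^{\ts}+zz^{\ts})=z_1^2 r>0$). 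For (iii): because $a$ is a function of $x$ with vanishing first component, every product in $\mathfrak R_a,\mathfrak R_z,\mathfrak R_\pi$ is either of the form $a^{\ts}_1\mathsf U=a^{\ts}_{12}\mathsf U_2$ times a $v$-derivative of a $v$-independent quantity (hence zero) or contributes only through the $(2,2)$-entry; in $\mathfrak R_a$ the lone surviving term is the $\nabla\log\pi$ one, giving $\mathfrak R_a=\mathrm{diag}\bigl(0,-\tfrac{\partial^2\log\pi}{\partial v^2}|a^{\ts}_{12}|^4\bigr)$, while $z$ constant ($\nabla z=\nabla^2 z=0$) reduces $\mathfrak R_\pi$ to $\mathrm{diag}(0,C_\pi)$ and $\mathfrak R_z$ to the displayed rank-one symmetrization of $z^{\ts}_1\nabla\bigl((a^{\ts}_{12})^2\partial_v\log\pi\bigr)$. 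Substituting $\gamma$ and $aa^{\ts}=\mathrm{diag}(0,r)$ (so that $\nabla_2(aa^{\ts})=0$ and $\nabla_2(zz^{\ts})=0$) into $\mathfrak R_{\mathcal I_a},\mathfrak R_{\gamma_a},\mathfrak R_{\gamma_z}$ yields the three remaining blocks directly.

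The main obstacle is the $\Lambda$-correction. Because $z$ is constant, $\mathsf D$ involves $\partial_v$ of the ($v$-independent) entries of $a^{\ts}$ and $\mathsf E$ involves $\partial_v$ of the (constant) entries of $z^{\ts}$, so $\mathsf D=\mathsf E=0$ and it remains to prove $\Lambda_1^{\ts}\Lambda_1+\Lambda_2^{\ts}\Lambda_2=\mathfrak M_\Lambda$. Here $\Lambda_1,\Lambda_2$ are scalars depending linearly on $\mathsf U$, say $\Lambda_j=\langle v_j,\mathsf U\rangle$, so that $\Lambda_1^{\ts}\Lambda_1+\Lambda_2^{\ts}\Lambda_2=\mathsf U^{\ts}(v_1v_1^{\ts}+v_2v_2^{\ts})\mathsf U$; the claim is the matrix identity $v_1v_1^{\ts}+v_2v_2^{\ts}=\tfrac{1}{(a^{\ts}_{12})^2}\mathsf K^{\ts}(aa^{\ts}+zz^{\ts})^{-1}\mathsf K$ with $\mathsf K$ as in the statement. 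Structurally, the $\mathsf D,\mathsf E,\Lambda$ terms of Definition~\ref{def: curvature sum} are the residue of a completion-of-squares in the $\nabla\mathsf U$ direction performed in the proof of Theorem~\ref{thm1}, whose quadratic weight is precisely $aa^{\ts}+zz^{\ts}$ --- this is why its inverse appears --- and $\mathsf K$ collects exactly the $\beta$-dependent pieces together with the contributions of $\alpha_1,\alpha_2$. Concretely one uses the explicit $2\times2$ inverse of $aa^{\ts}+zz^{\ts}$ (available since $\det(aa^{\ts}+zz^{\ts})=z_1^2 r>0$) and matches the three independent entries of the two symmetric matrices; this bookkeeping, and the cancellations needed to see that no further terms survive, are the laborious core of the argument. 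Once $\mathfrak R$ has been brought to the stated form, Assumption~\ref{asum2} is exactly $\mathfrak R\succeq\lambda(aa^{\ts}+zz^{\ts})$, and Theorem~\ref{thm1} with Corollary~\ref{col3}(ii) gives $\int_\Omega\|p(t,x,v)-\pi(x,v)\|\,dx=O(e^{-\lambda t})$, completing the proof.
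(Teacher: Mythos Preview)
Your proposal is correct and follows the same overall strategy as the paper: specialize the general tensor of Definition~\ref{def: curvature sum} to the data of SDE~\eqref{SDE}, evaluate each block, and invoke Theorem~\ref{thm1} and Corollary~\ref{col3}. The one substantive difference is in how you obtain the $\Lambda$-correction. You propose to compute the scalars $\Lambda_1,\Lambda_2$ from the explicit formulas in Definition~\ref{def: curvature sum} (via the decomposition coefficients $\lambda^{i'k}_l,\omega^{i'k}_l,\alpha_l$) and then verify the matrix identity $v_1v_1^{\ts}+v_2v_2^{\ts}=\mathfrak M_\Lambda$ by brute-force comparison of entries. The paper instead uses the equivalent reformulation of the Hessian term recorded in Remark~\ref{hessian relation}: it writes $\|\mathfrak{Hess}_\beta f\|_{\mathrm F}^2=[\mathsf X+\Lambda]^{\ts}(\mathsf Q^{\ts}\mathsf Q+\mathsf P^{\ts}\mathsf P)[\mathsf X+\Lambda]$ with a single $\Lambda=(0,0,\Lambda_3,\Lambda_4)$, computes the raw cross-term vectors $\mathsf C,\mathsf F,\mathsf G,\mathsf V^a$ directly from Definition~\ref{definition: F G V} (bypassing $\lambda,\omega,\alpha$ altogether), and then \emph{solves} the $2\times2$ linear system $(a^{\ts}_{12})^2(aa^{\ts}+zz^{\ts})\bigl(\begin{smallmatrix}\Lambda_3\\\Lambda_4\end{smallmatrix}\bigr)=\bigl(\begin{smallmatrix}\mathsf K_1\\\mathsf K_2\end{smallmatrix}\bigr)$. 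This \emph{derives} the form $\mathfrak M_\Lambda=\Lambda^{\ts}(\mathsf Q^{\ts}\mathsf Q+\mathsf P^{\ts}\mathsf P)\Lambda=\tfrac{1}{(a^{\ts}_{12})^2}\mathsf K^{\ts}(aa^{\ts}+zz^{\ts})^{-1}\mathsf K$ rather than checking it after the fact, and makes transparent why $(aa^{\ts}+zz^{\ts})^{-1}$ appears. Both routes land on the same $\mathfrak M_\Lambda$ (here the completion-of-squares condition is a square invertible system, so the solution is unique), but the paper's linear-system approach is shorter and spares you the computation of $\alpha_1,\alpha_2$.
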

We present several examples to illustrate the convergence rates of underdamped Langevin dynamics \eqref{SDE}. 

\begin{example}[Constant diffusion]
Consider $r(x)=1$. From Theorem \ref{thm: constant z}, 
\beaa
\mathfrak{R}&=&\begin{pmatrix}
0&0\\
0& -\frac{\partial^2  \log\pi }{\partial v^2}
\end{pmatrix}+\frac{1}{2}\Big[  \begin{pmatrix}
0\\
-z^{\ts}_1\nabla( \frac{\partial  \log\pi }{\partial v})
\end{pmatrix} z^{\ts}_1+z \begin{pmatrix}
0&
-z^{\ts}_1\nabla( \frac{\partial  \log\pi }{\partial v})
\end{pmatrix} \Big]-\frac{1}{2}[(\nabla \gamma)^{\ts}zz^{\ts}+zz^{\ts}\nabla\gamma]
\\
&&-\frac{1-\beta}{2}[(\nabla \gamma)^{\ts}aa^{\ts}+aa^{\ts}\nabla\gamma]+\frac{\beta}{2}[\gamma (aa^{\ts}\nabla\log\pi)^{\ts}+(aa^{\ts}\nabla\log\pi)\gamma^{\ts}]  
\\
&&+\frac{\beta^2}{(a^{\ts}_{12})^2} \begin{pmatrix}
0&-\frac{\gamma_1}{2}\\
\frac{\gamma_1}{2}&0
\end{pmatrix}^{\ts}(aa^{\ts}+zz^{\ts})^{-1}\begin{pmatrix}
0&-\frac{\gamma_1}{2}\\
\frac{\gamma_1}{2}&0
\end{pmatrix}.
	\eeaa
We remark that if $\beta=0$, then $\mathfrak{R}$ leads to the tensor in \cite[Formula (7.7)]{AE}. Our formulation adds an additional constant $\beta\in\mathbb{R}$. For example, we consider $U(x)=\frac{x^2}{2}$. Let $\kappa$ be the smallest eigenvalue of $(aa^{\ts}+zz^{\ts})^{-1}\mathfrak{R}$ for $z=\begin{pmatrix} 1\\ 0.1\end{pmatrix}\in\mathbb{R}^2$. We next plot the smallest eigenvalue $\kappa$ on a spatial domain $[-1,1]^2$. As in Figure \ref{figure1}, we observe that a suitable constant $\beta\in\mathbb{R}$ can improve the convergence rate in a local region. In details, if $\beta=0$, we observe that $\kappa(x)=0.0975$ for all $x$. If $\beta=0.1$, we know that $\kappa(x)=0.1$ when $x=[0,0]$. 

 \begin{figure}[H]
    \includegraphics[scale=0.35]{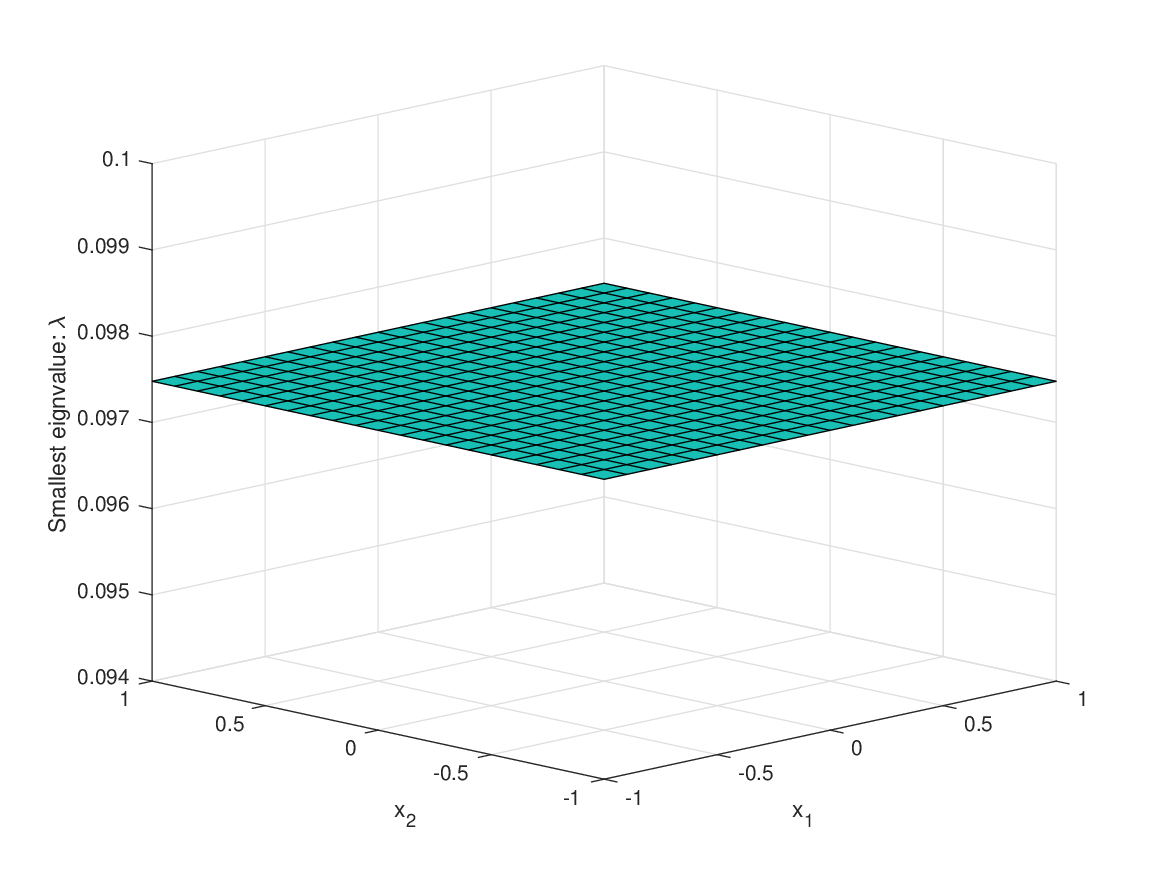}\includegraphics[scale=0.35]{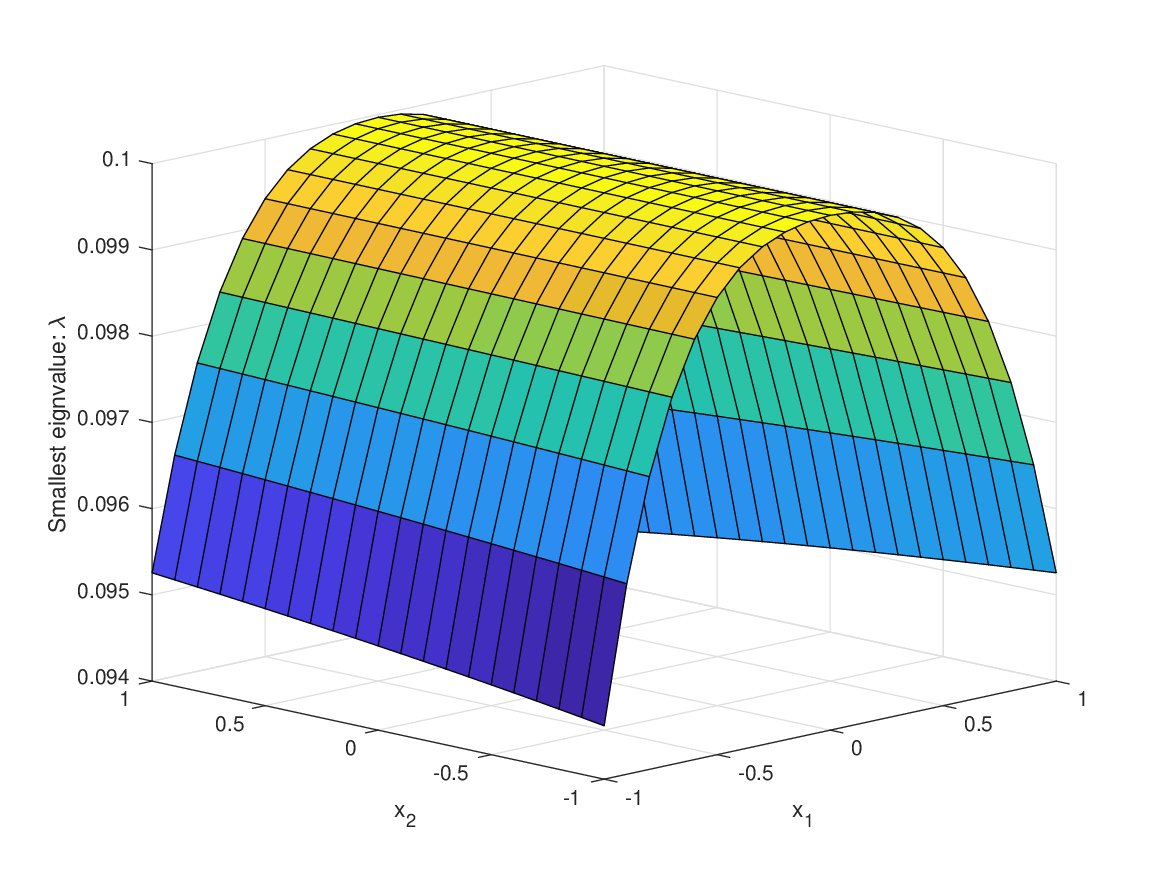}
    \caption{Illustration of convergence rates $\kappa$ with different choices of $\beta$. Left: $\beta=0$; Right $\beta=0.1$. }
    \label{figure1}
\end{figure}
\end{example}

\begin{example}[Variable diffusions]
Consider $U(x)=\frac{x^{2.5}-x}{2.5*1.5}$ with a variable temperature function $r(x)=(\nabla_{xx} U(x))^{-1}$. For simplicity of discussions, we study SDE \eqref{SDE} on a spatial domain $[0.5, 1]^2$ with periodic boundary conditions. In this case, we can establish convergence rates of SDE \eqref{SDE}. Similarly, we plot the smallest eigenvalue $\kappa=\kappa_{\min}((aa^{\ts}+zz^{\ts})^{-1}\mathfrak{R})$ on a spatial domain $[0.5,1]^2$. Here we choose $z=\begin{pmatrix} 1\\0.1\end{pmatrix}$. 
 \begin{figure}[H]
    \includegraphics[scale=0.35]{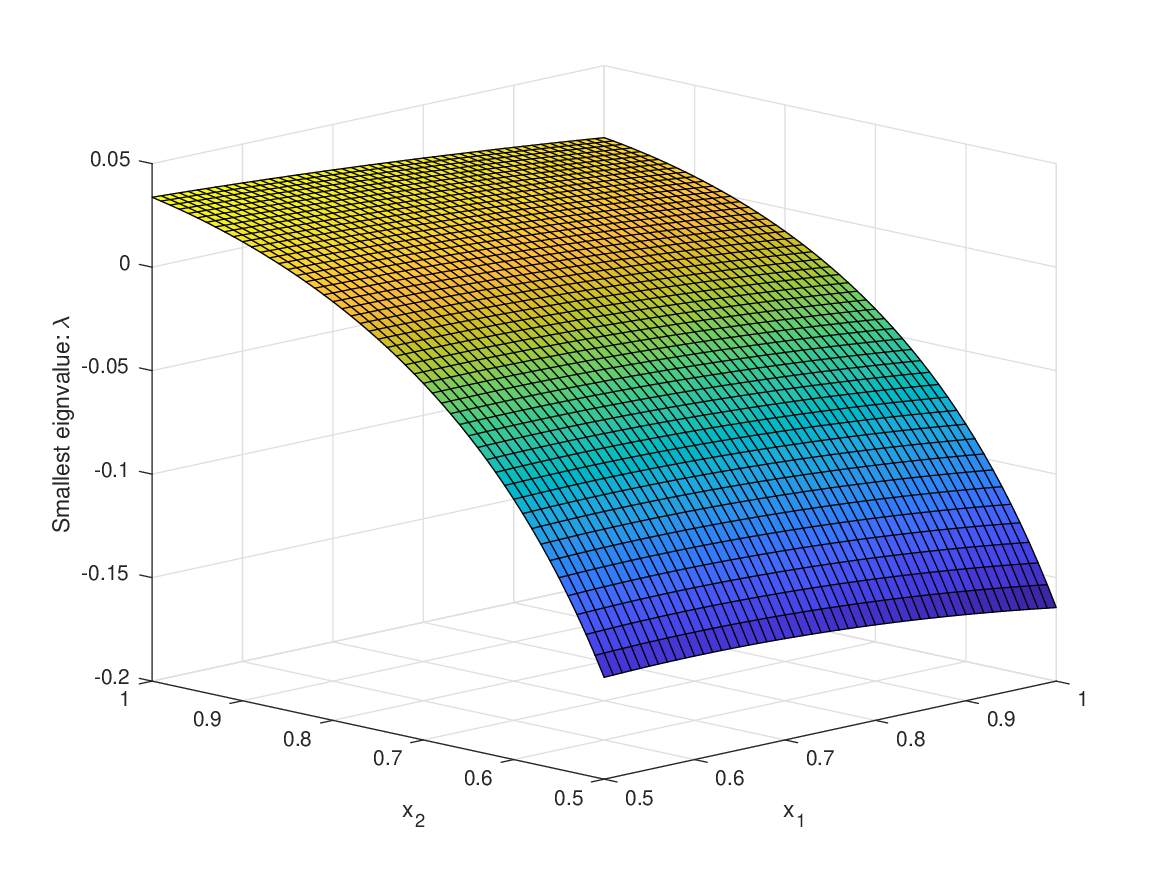}\includegraphics[scale=0.35]{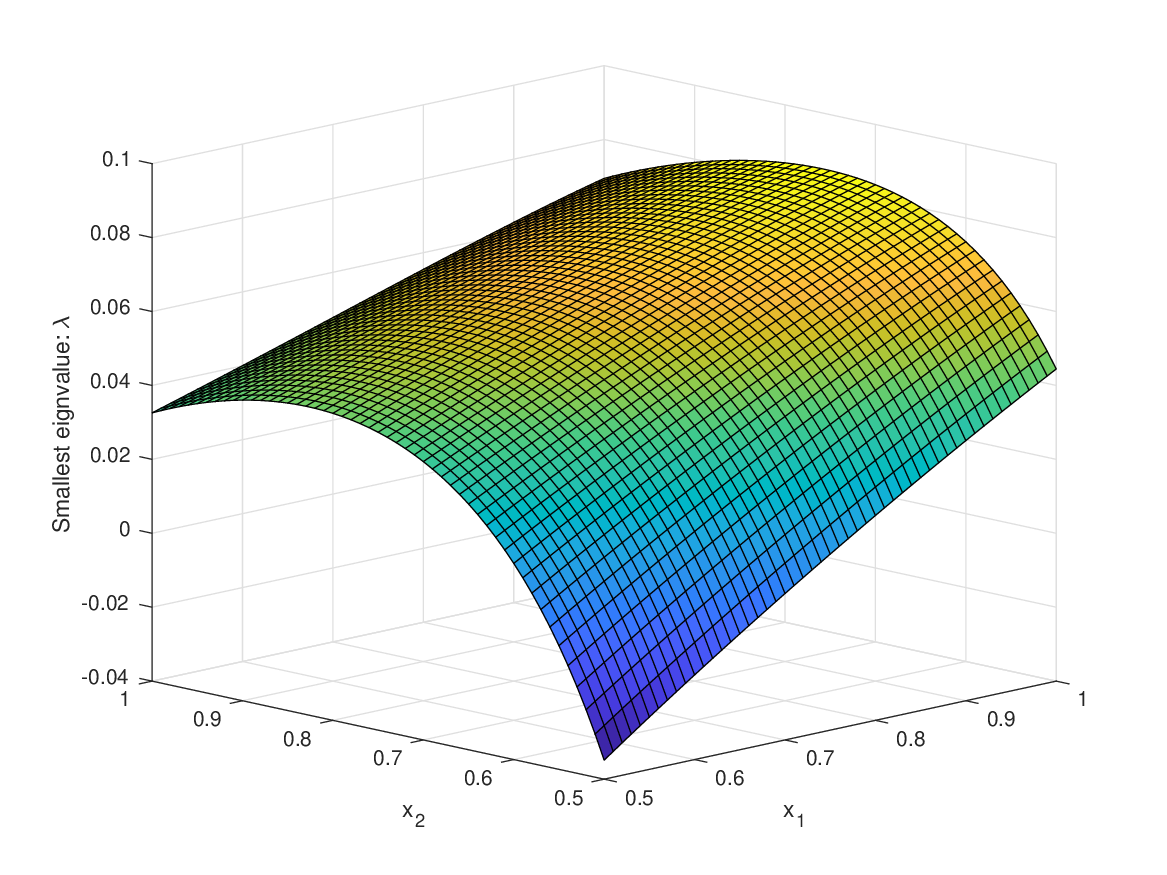}
    \caption{Illustration of convergence rates $\kappa$ with different choices of $\beta$. Left: $\beta=0$; Right: $\beta=0.6$.}
    \label{figure1}
\end{figure}
\end{example}
\begin{proposition}[Sufficient conditions] In Example $1$, let $\beta=0$, $d=1$, $r(x)=r\in\hR_+$, then
\beaa 
\mathfrak R&=& 
	\begin{pmatrix}
	z_1z_2& \frac{1}{2}[rz_1z_2+z_2^2-\partial^2_{xx}U z_1^2+r ]\\
	\frac{1}{2}[rz_1z_2+z_2^2-\partial^2_{xx}U z_1^2+r ]& r^2+rz_2^2-\partial^2_{xx} U z_1z_2
\end{pmatrix}_{2\times 2}. 
\eeaa 
Assume that $\underline{\kappa} \le \pa_{xx}^2U\le \overline{\kappa}$, and there exist constants $z_2\in(0,\frac{r+\sqrt{r^2+4r}}{2})$, such that $\underline{\kappa}$, $\overline{\kappa}$ satisfy the following conditions:
\bea \label{1d condition simple}
-\overline{\kappa}^2+[2(r(1+z_2)-z_2^2)] \underline{\kappa}-[(1-z_2)r+z_2^2]^2>0,\q r^2+rz_2^2-\overline{\kappa} z_2>0.
\eea 
Then there exists $\kappa>0$, such that 
\beaa
\mathfrak R\succeq \kappa (aa^{\ts}+zz^{\ts}).
\eeaa
\end{proposition}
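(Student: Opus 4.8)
The plan is to first make the tensor $\mathfrak R$ explicit by specializing Proposition~\ref{thm: constant z}, and then to verify the spectral inequality $\mathfrak R\succeq\lambda(aa^{\ts}+zz^{\ts})$ via the $2\times2$ leading–minor criterion, exploiting that the only $x$–dependence of $\mathfrak R$ is through the scalar $\pa_{xx}^2U$, which by hypothesis lies in the compact interval $[\underline\lambda,\overline\lambda]$. To obtain the displayed matrix one substitutes $\beta=0$ and $r(x)\equiv r$ into the formula of Proposition~\ref{thm: constant z}: then $a=(0,\sqrt r)^{\ts}$ is a constant vector, so $\nabla a=0$, $\nabla^2 a=0$ and $\pa_x a^{\ts}_{12}=0$, hence $\mathsf K=0$ and $\mathfrak M_{\Lambda}=0$, the tensor $\mathfrak R_{\pi}$ vanishes (each term of $C_\pi$ carries a factor $\nabla a^{\ts}_{12}$ or $\nabla^2 a^{\ts}_{12}$), and $\beta\mathfrak R_{\mathcal I_a}=0$; using $\log\pi=-\frac{v^2}{2}-U(x)-\log Z$, $\gamma=(-v,\,U'(x))^{\ts}$ and $\nabla\gamma=\begin{pmatrix}0&U''\\-1&0\end{pmatrix}$, evaluating and summing $\mathfrak R_a$, $\mathfrak R_z$, $\mathfrak R_{\gamma_a}$, $\mathfrak R_{\gamma_z}$ is a routine computation yielding the stated $\mathfrak R$.

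Next, write $S:=aa^{\ts}+zz^{\ts}=\begin{pmatrix}z_1^2&z_1z_2\\ z_1z_2&z_2^2+r\end{pmatrix}$; since $\det S=z_1^2 r>0$ with $z_1\neq0$ (taking $z_1=1$ as in Example~1, which is why \eqref{1d condition simple} only features $z_2$), $S$ is positive definite. Because $\mathfrak R=\mathfrak R(t)$ depends on $x$ only, and affinely, through $t:=\pa_{xx}^2U(x)\in[\underline\lambda,\overline\lambda]$, it suffices to produce $\lambda>0$ with $\mathfrak R(t)\succeq\lambda S$ for every $t\in[\underline\lambda,\overline\lambda]$: once $\mathfrak R(t)\succ0$ for all such $t$, the continuous map $t\mapsto\lambda_{\min}\big(S^{-1/2}\mathfrak R(t)S^{-1/2}\big)$ attains a strictly positive minimum on the compact interval $[\underline\lambda,\overline\lambda]$, and that minimum is the desired $\lambda$.

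It remains to prove $\mathfrak R(t)\succ0$ for $t\in[\underline\lambda,\overline\lambda]$. By Sylvester's criterion this amounts to $\mathfrak R_{11}=z_1z_2>0$, immediate from $z_2>0$, together with $\det\mathfrak R(t)>0$. Expanding, $t\mapsto\det\mathfrak R(t)$ is a quadratic with leading coefficient $-\frac14 z_1^4<0$, hence strictly concave, so its minimum on $[\underline\lambda,\overline\lambda]$ is attained at an endpoint; completing the square (with $z_1=1$) gives
\[
4\det\mathfrak R(t)=-(t-r)^2+G(t),\qquad G(t):=2z_2(1-z_2)\,t+(4r^2-2r)z_2+2z_2^3-z_2^4-(2r+1)z_2^2,
\]
so that $G(\underline\lambda)$ is precisely the left–hand side of the second inequality in \eqref{1d condition simple}. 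The first inequality in \eqref{1d condition simple} is equivalent to $r^2-2r\underline\lambda+\overline\lambda^2<\delta$, and since $r>0$ and $\underline\lambda\le\overline\lambda$ this quantity dominates $(t-r)^2$ for every $t\in[\underline\lambda,\overline\lambda]$, so $-(t-r)^2>-\delta$ on that interval; the second inequality gives $G(\underline\lambda)>\delta$, while the restriction $z_2<\frac{1+\sqrt{1+4r}}{2}$, i.e.\ $z_2^2-z_2-r<0$, says that the vertex $t^{*}=z_2-z_2^2+r$ of the concave parabola $\det\mathfrak R$ is positive — the structural fact that propagates positivity of $\det\mathfrak R$ from $t=\underline\lambda$ to all of $[\underline\lambda,\overline\lambda]$. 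Adding the two estimates, $4\det\mathfrak R(t)>-\delta+\delta=0$ on $[\underline\lambda,\overline\lambda]$, hence $\mathfrak R(t)\succ0$ there; combined with the previous paragraph this yields the required $\lambda>0$ (and then Corollary~\ref{col3}(ii) gives the $L_1$ decay rate $\lambda$).

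The main obstacle is the bookkeeping: carrying out the substitution of the first step without error and, above all, the endpoint analysis — rewriting $\det\mathfrak R(t)$ as $-(t-r)^2+G(t)$ and checking that the two one–sided inequalities of \eqref{1d condition simple}, together with $z_2<\frac{1+\sqrt{1+4r}}{2}$, genuinely force $\det\mathfrak R>0$ over the whole range of $\pa_{xx}^2U$ rather than at a single point; the concavity of $t\mapsto\det\mathfrak R(t)$ and the sign of the vertex $t^{*}$ are what make the two one–sided conditions suffice. The compactness reduction is routine once pointwise positive definiteness is in hand.
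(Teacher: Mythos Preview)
Your overall strategy matches the paper's: reduce to showing $\det\mathfrak R(t)>0$ for $t=\partial_{xx}^2U\in[\underline\lambda,\overline\lambda]$ via Sylvester's criterion, then extract a uniform $\lambda>0$ by compactness. Your derivation of the displayed $\mathfrak R$ from Proposition~\ref{thm: constant z} is more detailed than the paper (which simply states the matrix), and your compactness step making $\lambda$ uniform is a useful addition.

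There is, however, a genuine gap in your endpoint estimate. After writing $4\det\mathfrak R(t)=-(t-r)^2+G(t)$ with $G(t)=2(z_2-z_2^2)t+C$, you correctly show $-(t-r)^2>-\delta$ on the whole interval, but for $G$ you only obtain $G(\underline\lambda)>\delta$. Your ``adding the two estimates'' step therefore only yields $4\det\mathfrak R(\underline\lambda)>0$, not positivity on all of $[\underline\lambda,\overline\lambda]$; when $z_2>1$ (allowed, since $\tfrac{1+\sqrt{1+4r}}{2}>1$ for $r>0$) the linear function $G$ is \emph{decreasing}, so $G(t)>\delta$ can fail for $t>\underline\lambda$. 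Your vertex remark does not repair this: positivity of the vertex $t^{*}=r+z_2-z_2^2$ of the full parabola says nothing about $G(t)$ staying above~$\delta$.

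The paper avoids this by bounding the quadratic \emph{before} splitting into the two $\delta$-conditions: with $z_1=1$ it writes $4\det\mathfrak R(t)=-t^2+2(r+z_2-z_2^2)t+C$ and uses $-t^2\ge-\overline\lambda^2$ together with $2(r+z_2-z_2^2)t\ge 2(r+z_2-z_2^2)\underline\lambda$, the latter valid precisely because the $z_2$ restriction makes $r+z_2-z_2^2>0$. This gives the single inequality $-\overline\lambda^2+2(r+z_2-z_2^2)\underline\lambda+C>0$, which is then rearranged into the two conditions~\eqref{1d condition simple} via the auxiliary $\delta$. So the role of $z_2<\tfrac{1+\sqrt{1+4r}}{2}$ is to make the \emph{full} linear coefficient $r+z_2-z_2^2$ positive (so the linear term is bounded below at $\underline\lambda$), not merely to make the vertex positive. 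Replacing your decomposition by this direct bound fixes the argument.
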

\begin{proof}
It is sufficient to prove $\mathsf{det}(\mathfrak R)>0$ for $z_1z_2>0$ and  $r^2+rz_2^2-\partial^2_{xx} U z_1z_2>0$, which is equivalent to 
\beaa 
z_1z_2(r^2+rz_2^2-\partial^2_{xx} U z_1z_2)-\frac{1}{4}(rz_1z_2+z_2^2-\partial^2_{xx}U z_1^2+r)^2>0.
\eeaa 
It is equivalent to the following inequality:
\bea\label{1d condition general} 
-z_1^4(\pa_{xx}^2U)^2+[2(r(1+z_1z_2)-z_2^2)z_1^2] \pa_{xx}^2U-[(1-z_1z_2)r+z_2^2]^2>0.
\eea 
According to the assumption of $\pa_{xx}^2U$, it is sufficient to prove the following conditions: 
\bea\label{1d condition}
\begin{cases}
& z_1z_2>0,\q r^2+rz_2^2-\overline{\kappa} z_1z_2>0,\q   (r(1+z_1z_2)-z_2^2)>0;\\
&-z_1^4\overline{\kappa}^2+[2(r(1+z_1z_2)-z_2^2)z_1^2] \underline{\kappa}-[(1-z_1z_2)r+z_2^2]^2>0.
\end{cases}
\eea

Let $z_1=1$, then \eqref{1d condition simple} is equivalent to \eqref{1d condition}. We complete the proof.
\end{proof}
\begin{remark}
In general, \eqref{1d condition general} is more general than \eqref{1d condition}. For example, if we take $z_1=1$ and $z_2=\frac{r}{2}$, we get 
\beaa 
-(\partial^2_{xx} U)^2+2(r+\frac{r^2}{4})\partial^2_{xx} U-(r-\frac{r^2}{4})^2>0, \q r^2+\frac{r^3}{4}-\partial^2_{xx}U\frac{r}{2} >0.
\eeaa 
The above inequality implies that $r+\frac{r^2}{4}-\sqrt{r^3} <\partial^2_{xx} U< r+\frac{r^2}{4}+\sqrt{r^3}$, since we also have $r+\frac{r^2}{4}+\sqrt{r^3}<2r+\frac{r^2}{2}$.
\end{remark}

\section{Example II: $3$-oscillators with nearest-neighbor
couplings}\label{sec: anharmonic}
In this section, we apply Theorem \ref{thm1} to prove the exponential convergence in $L_1$ distance for three oscillator chains with nearest-neighbor couplings \cite{HairerMattingly}. 
Consider
\begin{equation}\label{anharmonic chain}
\left\{\begin{aligned}
dq_j=&p_j dt, \quad j=0,1,2, \\
dp_0=& -\xi_0p_0 dt -V'_1(q_0)dt-V_2'(q_0-q_1)dt+\sqrt{2\xi_0T} dB_0, \\
dp_1=& -V'_1(q_1)dt-V_2'(q_1-q_0)dt-V_2'(q_1-q_2)dt, \\
dp_2=& -\xi_2 p_2dt -V_1'(q_2)dt - V_2'(q_2-q_1)dt+\sqrt{2\xi_2T}dB_2.
\end{aligned}\right.
\end{equation}
We abuse the notation slightly. Denote \qi{ $q\in\mathbb{R}^3$} as the state variable and $p\in\mathbb{T}^3$ as the moment variable, where $\mathbb{T}^3$ is a $3$ dimensional torus. The dynamics \eqref{anharmonic chain} is associated with the Hamiltonian function 
\beaa
H(p,q)=\sum_{i=0}^2(\frac{\qi{|p_i|^2}}{2}+V_1(q_i))+\sum_{i=1}^2V_2(q_i-q_{i-1}),
\eeaa 
where function $V_2$ is a smooth interaction potential between neighboring oscillators, and function $V_1$ is a smooth pinning potential. In this paper, we consider $\xi_0=\xi_2$. We assume that there exists a unique invariant measure $\pi$, where
\beaa
 \pi(p,q)=\frac{1}{Z} e^{-\frac{H(p,q)}{T}},
\eeaa  
with a normalization constant $Z=\int_{\mathbb{R}^3\times\mathbb{T}^3} e^{-\frac{H(p,q)}{T}} dqdp<\infty$. 
\begin{remark}
If $\xi_0\neq \xi_2$, there is no {explicit} formulation for the invariant density $\pi$. 
See related studies \cite{EckmannHairer, EckmannPillet, HairerMattingly}. {This example focuses on the case when $\pi$ is explicitly known.}
\end{remark}

\noindent{\textbf{Notations:}}
Denote $X(t)=(q_0(t),q_1(t),q_2(t),p_0(t),p_1(t),p_2(t))^{\ts}$ and $x=(q_0, q_1, q_2, p_0, p_1,p_2)^{\ts}$. Equation \eqref{anharmonic chain} can be written as 
\beaa
dX_t &=& b(X_t)dt+\sqrt{2}adB_t,
\eeaa
where we write $B_t=(B_0(t),B_2(t))^{\ts}$,
\beaa
b(X)&=&\begin{pmatrix}
	p_0& p_1& p_2& -\xi p_0  -\pa_{q_0}H& -\pa_{q_1}H& -\xi p_2 -\pa_{q_2}H\end{pmatrix}^{\ts},
	\eeaa
and
\beaa 
	 a=\begin{pmatrix}
	0&0&0&\sqrt{\xi T}&0&0\\
	0&0&0&0&0&\sqrt{\xi T}
\end{pmatrix}^{\ts}.
	  \eeaa 
Denote $\mathsf I_3=\mathsf{Diag}(1,1,1)_{3\times 3}$ as an identity matrix. 
We select the matrix $z$ as
\beaa
z=\begin{pmatrix}
	z_1\mathsf I_3&0_{3\times 1}\\
	z_2\mathsf I_3&(\overrightarrow{z_3})_{3\times 1}
\end{pmatrix}_{6\times 4},
\eeaa
where we denote $\overrightarrow{z_3}=\begin{pmatrix}
	z_{31}& z_{32}(p_0,p_2)& z_{33}
\end{pmatrix}^{\ts}$ for constants $z_{31},z_{33}\in\hR$, and function $z_{32}(p_0,p_2)$, $z_{32}:\hR^2\rightarrow \hR$. We observe that 
\beaa
aa^{\ts}=\begin{pmatrix}
	0&0\\
	0&(\xi T)\mathsf I_3^O
\end{pmatrix}_{6\times 6},\quad zz^{\ts}=\begin{pmatrix}
	z_1^2\mathsf I_3& z_1z_2 \mathsf I_3\\
	z_1z_2\mathsf I_3&z_2^2\mathsf I_3+\overrightarrow{z_3}\overrightarrow{z_3}^{\ts}
\end{pmatrix}_{6\times 6},
\eeaa
where we write $\mathsf I_3^O=\mathsf{Diag}(1,0,1)_{3\times 3}$, and 
	\beaa
\gamma(x)
&=&  a(x)a(x)^{\ts}\nabla\log \pi(x)-b(x)+\big(\sum_{j=1}^{6}\frac{\partial}{\partial x_j}(a(x)a(x)^{\ts})_{ij}\big)_{1\leq i\leq 6}\\
&=& \begin{pmatrix}
	-p_0&-p_1&-p_2&\pa_{q_0}H&\pa_{q_1}H&\pa_{q_2}H
\end{pmatrix}^{\ts}.
 \eeaa 
It is easy to verify that $\nabla\cdot(\pi \gamma)=0$. We have the following matrix function $\mathfrak R$ for SDE \eqref{anharmonic chain}.
\begin{proposition}\label{prop: tensor oscilator}
 Define a matrix function $\mathfrak{R}\in\mathbb{R}^{6\times 6}$ as
\beaa
\mathfrak R= \begin{pmatrix}
	\mathsf R_1 & \mathsf R_2\\
	\mathsf R_2 & \mathsf R_3
\end{pmatrix}_{6\times 6},
\eeaa
with 
\beaa 
\mathsf R_1 &=& z_1z_2\mathsf I_3,\quad  \mathsf R_2=  \frac{1}{2}(z_1z_2\xi+\xi T)\mathsf I_3^O+\frac{1}{2}(z_2^2\mathsf I_3+\overrightarrow{z_3}\overrightarrow{z_3}^{\ts}-z_1^2L),\\
\mathsf R_3&=& ((\xi T)^2+z_2^2\xi )\mathsf I_3^O+{\frac{1}{2}[\partial_{q_0}H\partial_{p_0}+\partial_{q_2}H\partial_{p_2}](\overrightarrow{z_3}\overrightarrow{z_3}^{\ts} )}-z_1z_2L+\frac{1}{2}(\overrightarrow{z_3^{\mathsf S}}\overrightarrow{z_3}^{\ts}+\overrightarrow{z_3}(\overrightarrow{z_3^{\mathsf S}})^{\ts})+\mathsf I_{\pi},
\eeaa 
where we write 
\bea 
\mathsf I_3^O&=&\mathsf{Diag}(1,0,1)_{3\times 3},\q \overrightarrow{z_3^\mathsf S}=(z_{31}\xi\quad \mathsf S_1 \quad z_{33}\xi)^{\ts},\nonumber \\
 \mathsf I_{\pi}&=&\begin{pmatrix}
	0&0&0\\
	0&-2 z_{32}\mathsf S_1- 2\xi T [|\pa_{p_0}z^{\ts}_{45}|^2+|\pa_{p_2}z^{\ts}_{45}|^2 ]&0\\
	0&0&0
\end{pmatrix}_{3\times 3}, \q \qi{L= (\nabla^2_{q_iq_j}H)_{0\le i,j\le 2}},
\nonumber\\
\mathsf S_1&=&\xi T\pa_{p_0p_0}z_{32} +\xi T \pa_{p_2p_2}z_{32}+(\xi T \pa_{p_0}\log\pi)\pa_{p_0}z_{32}+(\xi T \pa_{p_2}\log\pi)\pa_{p_2}z_{32}. \nonumber
\eea 
If there exists a constant $\kappa>0$, such that 
\begin{equation*}
\mathfrak{R}\succeq \kappa (aa^{\ts}+zz^{\ts}), 
\end{equation*}
then entropy dissipation results in Theorem \ref{thm1} and Corollary \ref{col3} hold. 
	\end{proposition}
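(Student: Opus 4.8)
The plan is to evaluate the tensor $\mathfrak R$ of Definition \ref{def: curvature sum} on the explicit data $(a,z,b,\gamma,\pi)$ of SDE \eqref{anharmonic chain} displayed above, with the parameter choice $\beta=0$ (which is why no $\beta$ appears in the statement), and then to reorganize the outcome according to the $(q,p)$ block splitting of $\hR^6$.

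First I would record the simplifications forced by the special structure. Since $a$ is a constant matrix, $\nabla a=0$, so $z_k^\ts\nabla a_i^\ts=0$ lies trivially in $\mathrm{Span}\{a_1^\ts,\dots,a_n^\ts\}$: this verifies Assumption \ref{prop:main condition} with all $\lambda^{i'k}_l\equiv 0$, and simultaneously kills $\mathsf D\equiv 0$ and every summand of $\mathfrak R_a$, $\mathfrak R_z$, $\mathfrak R_{\pi}$, $\mathfrak R_{\mathcal I_a}$ carrying a factor $\nabla a_i^\ts$ or $\nabla^2 a_i^\ts$. The only nonconstant entry of $z$ is $z_{32}(p_0,p_2)$, so $\nabla z$, the coefficients $\omega^{i'k}_l$, and the vector $\mathsf E$ are supported on the $p_1$-row through $\pa_{p_0}z_{32}$, $\pa_{p_2}z_{32}$, $\pa_{p_0p_0}z_{32}$, $\pa_{p_2p_2}z_{32}$. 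A short computation then shows that, with $\lambda\equiv 0$ and $\beta=0$, the vectors $\Lambda_1$ and $\Lambda_2$ of Definition \ref{def: curvature sum} vanish identically: the $\omega$-terms cancel in pairs, and the surviving $-a_i^\ts\nabla z_l^\ts\mathsf U$ term cancels against $\mathsf E_{il}$. Hence $-\Lambda_1^\ts\Lambda_1-\Lambda_2^\ts\Lambda_2+\mathsf D^\ts\mathsf D+\mathsf E^\ts\mathsf E$ collapses to $\mathsf E^\ts\mathsf E$, a single rank-one form in the $(p_1,p_1)$ slot.

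Next I would compute the remaining pieces. With $\pi=\frac{1}{Z}e^{-H/T}$ one has $\nabla\log\pi=-\frac{1}{T}\nabla H$, $\pa_{p_i}H=p_i$, $aa^\ts=(\xi T)\mathsf I_3^O$ in the momentum block, and $\nabla\gamma=\begin{pmatrix}0&L\\-\mathsf I_3&0\end{pmatrix}$ with $L=(\nabla_{q_iq_j}H)_{0\le i,j\le 2}$; since $aa^\ts$ is constant, the term $\frac{1}{2}\gamma_{\hat k}\la\mathsf U,\nabla_{\hat k}(aa^\ts)\mathsf U\ra$ in $\mathfrak R_{\gamma_a}$ drops, while in $\mathfrak R_{\gamma_z}$ only $\nabla_{p_0}(\vr{z_3}\vr{z_3}^\ts)$ and $\nabla_{p_2}(\vr{z_3}\vr{z_3}^\ts)$ survive. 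Evaluating $-\la\nabla\gamma\mathsf U,(aa^\ts+zz^\ts)\mathsf U\ra$ and symmetrizing produces the $z_1z_2\mathsf I_3$ of $\mathsf R_1$, the $-z_1z_2L$ of $\mathsf R_3$, and the $\frac{1}{2}\xi T\mathsf I_3^O$ together with $\frac{1}{2}(z_2^2\mathsf I_3+\vr{z_3}\vr{z_3}^\ts-z_1^2L)$ in $\mathsf R_2$. The $\nabla\log\pi$-terms of $\mathfrak R_a$ and $\mathfrak R_z$, using $(aa^\ts\nabla\log\pi)_{p_i}=-\xi p_i$ for $i\in\{0,2\}$ and $\nabla_{p_i}(aa^\ts\nabla\log\pi)_{p_i}=-\xi$, supply the remaining $\frac{1}{2}z_1z_2\xi\,\mathsf I_3^O$ of $\mathsf R_2$, the $(\xi T)^2\mathsf I_3^O$ and $z_2^2\xi\,\mathsf I_3^O$ of $\mathsf R_3$, and the $\xi$-weighted entries $z_{31}\xi$, $z_{33}\xi$ of $\vr{z_3^{\mathsf{S}}}$; while the second-order $z_{32}$-terms of $\mathfrak R_z$, $\mathfrak R_{\pi}$ and the form $\mathsf E^\ts\mathsf E$, combined with the $z^\ts\nabla\log\pi$ contributions, assemble into the middle entry $\mathsf S_1$ of $\vr{z_3^{\mathsf{S}}}$ and into $\mathsf I_{\pi}$. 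Collecting everything in the $(q,p)$ splitting yields $\mathfrak R=\begin{pmatrix}\mathsf R_1&\mathsf R_2\\\mathsf R_2&\mathsf R_3\end{pmatrix}$. The convergence assertion is then immediate: Assumption \ref{prop:main condition} has just been checked, Assumption \ref{asum2} is precisely the stated hypothesis $\mathfrak R\succeq\lambda(aa^\ts+zz^\ts)$, so Theorem \ref{thm1} and Corollary \ref{col3} apply verbatim.

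The substantive obstacle is entirely the bookkeeping in the middle step: there are many bilinear forms making up $\mathfrak R$, and one must check carefully that every first-order derivative $\pa_{p_0}z_{32}$, $\pa_{p_2}z_{32}$ either cancels (between $\Lambda_2$ and $\mathsf E$, and inside $\mathfrak R_z$) or is absorbed into $\mathsf S_1$ and $\mathsf I_{\pi}$, so that no spurious $z_{32}$-gradient term survives in the off-diagonal blocks, and that the various $(\xi T)$-, $\xi$- and $z_iz_j$-weighted identity pieces land in the claimed positions of $\mathsf I_3$ versus $\mathsf I_3^O$. As with the earlier examples, I would relegate the term-by-term verification to the Appendix.
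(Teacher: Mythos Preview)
Your proposal is correct and follows essentially the same route as the paper's Appendix proof: set $\beta=0$, use $\nabla a=0$ to verify Assumption~\ref{prop:main condition} and kill most terms, show $\Lambda_1=\Lambda_2=0$, then evaluate $\mathfrak R_a,\mathfrak R_z,\mathfrak R_\pi,\mathfrak R_{\gamma_a},\mathfrak R_{\gamma_z}$ one by one and assemble them in the $(q,p)$ block form. The only cosmetic difference is that the paper establishes the vanishing of the $\Lambda$'s by checking the combination $\mathsf C+\mathsf F+\mathsf G+\mathsf Q^\ts\mathsf D+\mathsf P^\ts\mathsf E=0$ in $\hR^{36}$ (the identity \eqref{technical condition}), whereas you read it off directly from the explicit $\Lambda_1,\Lambda_2$ formulas in Definition~\ref{def: curvature sum}; both are the same cancellation.
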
 
To simplify the computation, we choose the following parameters:
\bea\label{para oscilator}
\xi=T=1,\q  z_{31}=z_{33}=0,\q z_{32}(p_0,p_2)=N-\varepsilon p_0^2/2,\q z_1=1,
\eea 
with a fixed constant $N>0$ and a fixed scaling constant $\varepsilon>0$.  { We further assume that $\partial_{q_0}V_1(q_0)+\partial_{q_0}V_2(q_1-q_0)<\infty$.}
\begin{proposition}[Sufficient conditions]\label{prop chain} Assume that \qi{$(\nabla^2_{q_iq_j}H)_{0\le i,j\le 2}$} satisfies the following conditions, for $\delta_1,\delta_2>0$, $z_2\in (0,\min\{\frac{1+\sqrt{5}}{2},N\})$, and a small constant $\varepsilon>0$,
\bea \label{condtion oscillator new}
\begin{cases}
	& \underline\kappa \mathsf I_3\preceq \qi{(\nabla^2_{q_iq_j}H)_{0\le i,j\le 2}}  \preceq\overline\kappa \mathsf I_3;\\
&  2\underline\kappa -\overline\kappa^2\ge 1-\delta_1,\quad -(z_2^2+N^2)^2+2(N^2-z_2^2)\underline\kappa-\overline{\kappa}^2>0;\\
	&2z_2+2z_2^3-z_2^4-3z_2^2+2(z_2-z_2^2)\underline\kappa>\delta_1.
\end{cases}
\eea 
We denote $0\le \underline\kappa\le\overline\kappa$ as the lower and upper bounds for the eigenvalues of the matrix \qi{$\nabla^2_{qq}H$}. Then there exists a constant $\kappa>0$, such that 
\beaa
\mathfrak R\succeq \kappa (aa^{\ts}+zz^{\ts}).
\eeaa
\end{proposition}
\begin{remark}
	The condition of \qi{$\nabla^2_{qq}H$} in the above proposition implies the condition for \qi{$\nabla^2_{qq}V_1$} and \qi{$\nabla^2_{qq}V_2$}.
\end{remark}
\begin{proof}
	It is sufficient to prove that $\mathfrak R$ defined in Proposition \ref{prop: tensor oscilator} is positive definite. Applying the Schur complement for symmetric matrix function \cite{vandenberghe2004convex}[Appendix A.5], the following conditions are equivalent:
\begin{itemize}
\item[(1)]
$\mathfrak R\succeq 0$ ($\mathfrak R$ is positive semidefinite).
\item[(2)]$\mathsf R_1 \succeq 0$, $(\mathsf I_3-\mathsf R_1\mathsf R_1^{-1})\mathsf R_2 =0$, $\mathsf R_3-\mathsf R_2 \mathsf R_1^{-1}\mathsf R_2\succeq 0$.
\end{itemize}
Plugging in the parameters from \eqref{para oscilator}, and letting $z_2>0$, we need to show  $\mathsf R_3-\mathsf R_2 \mathsf R_1^{-1}\mathsf R_2\succeq 0$. In other words, 
\beaa 
&&\mathsf R_3-\mathsf R_2 \mathsf R_1^{-1}\mathsf R_2\\
&=&\begin{pmatrix}
	1+z_2^2 & 0 & 0\\
	0 & \varepsilon(1-p_0^2)(N-\varepsilon p_0^2/2)-2\varepsilon^2p_0^2-(N-\varepsilon p_0^2/2)\varepsilon p_0\partial_{q_0}H & 0 \\
	0& 0& 1+z_2^2
\end{pmatrix}-z_2L\\
&&-\frac{1}{z_2}\Big(\begin{pmatrix}
	\frac{1}{2}z_2^2+\frac{1}{2}z_2+\frac{1}{2}&0 & 0\\
	0 & \frac{1}{2}z_2^2+\frac{1}{2} (N-\varepsilon p_0^2/2 )^2& 0 \\
	0& 0 &\frac{1}{2}z_2^2+ \frac{1}{2}z_2+\frac{1}{2}
\end{pmatrix}-\frac{1}{2}L \Big)^2\\
&:=&\mathsf{Diag}(C_1,C_2,C_3)-z_2\mathsf O \cdot \mathsf{Diag}(\kappa_1,\kappa_2,\kappa_3)_{3\times 3}\cdot\mathsf O^{\ts}\\
&&-\frac{1}{4z_2}\Big(\mathsf{Diag}(C_4,C_5,C_6)- \mathsf O \cdot \mathsf{Diag}(\kappa_1,\kappa_2,\kappa_3)_{3\times 3}\cdot\mathsf O^{\ts} \Big)^2,
\eeaa 
where 
\beaa 
C_1&=&C_3=1+z_2^2,\quad  C_2=\varepsilon(1-p_0^2)(N-\varepsilon p_0^2/2)-2\varepsilon^2p_0^2 -(N-\varepsilon p_0^2/2)\varepsilon p_0\partial_{q_0}H,\q \\
C_4&=&C_6=1+z_2^2+z_2,\q C_5=z_2^2+(N-\varepsilon p_0^2/2 )^2,
\eeaa 
and we denote the eigenvalue decomposition of matrix $L$ as
\beaa 
L=\mathsf O \cdot \mathsf{Diag}(\kappa_1,\kappa_2,\kappa_3)_{3\times 3}\cdot\mathsf O^{\ts}.
\eeaa  
We write $\mathsf O\in\hR^{3\times 3}$ as the orthogonal matrix with $\mathsf O^{-1}=\mathsf O^{\ts}$ and $\kappa_1\le  \kappa_2\le  \kappa_3$ as the eigenvalues of matrix $L$. Thus
\beaa 
&&\mathsf R_3-\mathsf R_2 \mathsf R_1^{-1}\mathsf R_2\\
&=&\mathsf O\cdot\mathsf{Diag}\Big(\Big(C_i-z_2\kappa_i-\frac{1}{4z_2}( C_{i+3}^2-2C_{i+3}\kappa_i+\kappa^2_i) \Big)_{1\le i\le 3} \Big)_{3\times 3}\cdot\mathsf O^{\ts}\\
&=&\mathsf O\cdot\mathsf{Diag}\Big(\Big(C_i-\frac{C_{i+3}^2}{4z_2}+(\frac{C_{i+3}}{2z_2}-z_2)\kappa_i-\frac{\kappa^2_i }{4z_2}\Big)_{1\le i\le 3} \Big)_{3\times 3}\cdot\mathsf O^{\ts}
\eeaa 
It is sufficient to find parameters $z_2$, $N$, and $\varepsilon$, such that the following inequalities hold:
\beaa 
\Big(C_i-\frac{C_{i+3}^2}{4z_2}+(\frac{C_{i+3}}{2z_2}-z_2)\kappa_i-\frac{\kappa^2_i }{4z_2}\Big)>0, \q 1\le i\le 3.
\eeaa 
Since $z_2>0$, it is equivalent to prove
\bea\label{condition oscilator}
\begin{cases}
z_2(1+z_2^2)-\frac{(1+z_2+z_2^2)^2}{4}+(\frac{1+z_2+z_2^2}{2}-z_2^2)\kappa_i-\frac{\kappa^2_i }{4}>0, \\
z_2\varepsilon(1-p_0^2)(N-\varepsilon p_0^2/2)-2\varepsilon^2p_0^2{z_2}-(N-\varepsilon p_0^2/2)\varepsilon p_0\partial_{q_0}Hz_2\\
\qq-\frac{(z_2^2+(N-\varepsilon p_0^2/2 )^2)^2}{4}
+(\frac{z_2^2+(N-\varepsilon p_0^2/2 )^2}{2}-z_2^2)\kappa_i-\frac{\kappa^2_i }{4}>0.
\end{cases}
\eea 
The first inequality in \eqref{condition oscilator} holds under the following conditions. Suppose $1+z_2-z_2^2>0$,  
\beaa 
2\underline\kappa -\overline\kappa^2\ge 1-\delta_1,\qq 2z_2+2z_2^3-z_2^4-3z_2^2+2(z_2-z_2^2)\underline\kappa>\delta_1.
\eeaa 
{For $p_0\in\mathbb{T}^1$}, and $\partial_{q_0}V_1(q_0)+\partial_{q_0}V_2(q_1-q_0)<\infty$,  we select $\varepsilon$ to be very small, such that 
\beaa 
-\frac{(z_2^2+N^2)^2}{4}+(N^2-z_2^2)\frac{\underline\kappa}{2}-\frac{\overline{\kappa}^2}{4}>0.
\eeaa 
The proof is finished.
\qed 
\end{proof}

\begin{remark}
    In the current paper, we restrict $p\in\mathbb T^3$, which differs from the setting in \cite{HairerMattingly}. We mainly use the torus property to construct  the matrix $z$, i.e. $z_{32}(p_0,p_2)$ in \eqref{para oscilator}, such that we can establish the desired bounds in Proposition \ref{prop chain}. We leave the general setting with $p\in\hR^3$ for future studies.
\end{remark}

\section{Proof}\label{sec3:1}
In this section, we present the main proofs of this paper. 

\subsection{Outline of main results}
Before showing all the detailed proofs \qi{in} Theorem \ref{thm1}, we outline the main ideas in the following three steps. We postpone all derivations \qi{to} subsection \ref{subsection: proof}. 
\begin{itemize}
\item[Step $1$:] We first compute the dissipation of the weighted Fisher information functional. 
\begin{equation*}
\frac{d}{dt}\mathcal{I}_{a,z}(p \|\pi)=-2\int \Gamma_{2,a,z,\gamma}(f,f) pdx,
\end{equation*}
where $f=\log\frac{p}{\pi}$ and 
\begin{equation*}
    \Gamma_{2,a,z,\gamma}(f,f)=\widetilde \Gamma_{2}(f,f)+\widetilde \Gamma_{2}^{z,\pi}(f,f)+\Gamma_{\mathcal{I}_{a,z}}(f,f).
\end{equation*}
The definition of the above operators will be introduced shortly in Definition \ref{defn:tilde gamma 2 znew}.
\item[Step $2$:] We next decompose the weak form of information Gamma calculus. This is to derive the information Bochner's formula (Theorem \ref{thm: information bochner}):
	\beaa
\int \Gamma_{2,a,z,\gamma}(f,f) pdx=\int \Big[  \|\mathfrak{Hess}_{\beta} f\|_{\mathrm{F}}^2+\mathfrak{R}(\nabla f, \nabla f) \Big]pdx.
\eeaa 
\item[Step $3$:] From the information Bochner's formula, we establish the convergence result. In other words, if $\mathfrak{R}\succeq\kappa (aa^{\ts}+zz^{\ts})$, then
\begin{equation*}
    \frac{d}{dt}\mathcal{I}_{a,z}(p \|\pi)\leq -2\kappa \mathcal{I}_{a,z}(p \|\pi). 
\end{equation*}
From Gronwall's inequality, we can prove that the weighted Fisher information decays in Theorem \ref{thm1}. \qi{This decay result also establishes other functionals decay in Corollary \ref{col3}.}  
\end{itemize}
We \qi{remark} that Step $1$ is based on a global in-space computation, which comes from the second-order calculus of entropy functional in the sub-Riemannian density manifold. We carefully handle the non-communicative operators based on $\Gamma_1$, $\Gamma_1^{z}$, and vector field $\gamma$. Step $2$ is a local in-space calculation, which can be viewed as a generalization of Bochner's formula. Step $3$ follows directly from the modified Lyapunov method.  

\subsection{Information Gamma calculus}\label{sec31}
We next introduce \qi{all} tensors for the main proof. These tensors are derived from the information Gamma calculus. Denote the following operators of SDE \eqref{a}. 
For any $f\in C^{\infty}(\mathbb{R}^{n+m})$, the generator of SDE \eqref{a} satisfies
\begin{equation*}
\begin{aligned}
Lf=\widetilde Lf -\la \gamma,\nabla f\ra,
\end{aligned}
\end{equation*}
where
\bea
\widetilde L f&=& \nabla\cdot(aa^{\ts}\nabla f)+\la aa^{\ts}\nabla \log\pi,\nabla f\ra,
\eea
is the reversible component of the Kolmogorov backward operator. For a given matrix function $a\in\mathbb R^{(n+m)\times n }$, we construct a matrix function $z\in\mathbb R^{(n+m)\times m }$ to handle the degenerate component of SDE \eqref{a}. We also denote a $z$-direction generator as
\begin{equation*}
\widetilde L_z f=\nabla\cdot(zz^{\ts}\nabla f)+\la zz^{\ts}\nabla \log\pi,\nabla f\ra.    
\end{equation*}
Define Gamma one bilinear forms $\Gamma_{1},\Gamma_{1}^z\colon C^{\infty}(\mathbb{R}^{n+m})\times C^{\infty}(\mathbb{R}^{n+m})\rightarrow C^{\infty}(\mathbb{R}^{n+m})$ as 
\bea
\Gamma_{1}(f,f)=\la a^{\ts}\nabla f, a^{\ts}\nabla f\ra_{\hR^n},\quad \Gamma_{1}^z(f,f)=\la z^{\ts}\nabla f, z^{\ts}\nabla f\ra.
\eea

We now define the {\em information Gamma operators} for SDE \eqref{a}. In fact, it is a derivative calculation based on the reversible Kolmogorov backward operator $\widetilde L$, the non-reversible vector field $\gamma$, and the invariant distribution $\pi$.
\begin{definition}[Information Gamma operators]\label{defn:tilde gamma 2 znew}
Define the following three bi-linear forms: 
\begin{equation*}
 \widetilde\Gamma_{2}, \widetilde\Gamma_{2}^{z,\pi}, \Gamma_{\mathcal{I}_{a,z}}\colon C^{\infty}(\mathbb{R}^{n+m})\times C^{\infty}(\mathbb{R}^{n+m})\rightarrow C^{\infty}(\mathbb{R}^{n+m}).
\end{equation*}
\begin{itemize}
\item[(i)] Gamma two operator: 
\begin{equation*}
\widetilde\Gamma_{2}(f,f)=\frac{1}{2}\widetilde L\Gamma_{1}(f,f)-\Gamma_{1}(\widetilde Lf, f).
\end{equation*}
\item[(ii)] Generalized Gamma $z$ operator:
\beaa
\widetilde\Gamma_2^{z,\pi}(f,f)&=&\quad\frac{1}{2}\widetilde L\Gamma_1^{z}(f,f)-\Gamma_{1}^z(\widetilde Lf,f)\\
&& \label{new term}+\div^{{\pi}}_z\Big(\Gamma_{1,\nabla(aa^{\ts})}(f,f )\Big)-\div^{{\pi} }_a\Big(\Gamma_{1,\nabla(zz^{\ts})}(f,f )\Big).
  \eeaa 
Here $\div^{{\pi} }_a$, $\div^{{\pi} }_z$ are divergence operators defined by
\begin{equation*}
\div^{{\pi}}_a(F)=\frac{1}{{\pi} }\nabla\cdot({\pi} aa^{\ts} F), \quad\div^{{\pi} }_z(F)=\frac{1}{{\pi} }\nabla\cdot({\pi} zz^{\ts}F),
\end{equation*}
for any smooth vector field $F\in \mathbb{R}^{n+m}$, and $\Gamma_{1, \nabla (aa^{\ts})}$, $\Gamma_{1, \nabla (zz^{\ts})}$ are vector Gamma one bilinear forms defined by 
\beaa
\Gamma_{1,\nabla(aa^{\ts)}}(f,f)&=&\la \nabla f,\nabla(aa^{\ts})\nabla f\ra=(\la \nabla f,\frac{\partial}{\partial x_{\hat k}}(aa^{\ts})\nabla f\ra)_{\hat k=1}^{n+m},\\
\Gamma_{1,\nabla(zz^{\ts)}}(f,f)&=&\la \nabla f,\nabla(aa^{\ts})\nabla f\ra=(\la \nabla f,\frac{\partial}{\partial x_{\hat k}}(zz^{\ts})\nabla f\ra)_{\hat k=1}^{n+m}.
\eeaa
\item[(iii)] Irreversible Gamma operator: 
\beaa
\Gamma_{\mathcal{I}_{a,z}}(f,f)&=& (\widetilde Lf+\widetilde L_zf) \la \nabla f,\gamma\ra -\frac{1}{2}\la \nabla \big(\Gamma_1(f,f)+\Gamma_1^z(f,f)\big),\gamma\ra .
\eeaa
\end{itemize}
\end{definition}

Under Assumption \ref{prop:main condition}, we establish the following {\em information Bochner's formula}. 
\begin{theorem}[Information Bochner's formula]\label{thm: information bochner}
If Assumption \ref{prop:main condition} is satisfied, then the following decomposition holds. For any $f=\log\frac{p}{\pi}\in C^\infty(\mathbb{R}^{n+m},\mathbb{R})$ and any $\beta\in\mathbb R$,
	\beaa
\int \Big[ \widetilde \Gamma_{2}(f,f)+\widetilde \Gamma_2^{z,\pi}(f,f)+\Gamma_{\mathcal I_{a,z}}(f,f)\Big] pdx=\int \Big[  \|\mathfrak{Hess}_{\beta}f\|_{\mathrm{F}}^2
+\mathfrak{R}(\nabla f,\nabla f)\Big]pdx.
\eeaa 
We denote 
\beaa
\|\mathfrak{Hess}_{\beta}f\|_{\mathrm{F}}^2=[\mathsf Q\mathsf X+\Lambda_1]^{\ts} [\mathsf Q\mathsf X+\Lambda_1]+[\mathsf P\mathsf X+\Lambda_2]^{\ts} [\mathsf P\mathsf X+\Lambda_2],
\eeaa
where $\mathfrak R$, $\Lambda_1$, $\Lambda_2$ are defined in Definition \ref{def: curvature sum}. And we define matrices $\mathsf Q$  and $\mathsf P$ by
\bea\label{matrix Q P}
\mathsf Q= a^{\ts}\otimes a^{\ts}
	\in \mathbb{R}^{n^2\times(n+m)^2},\quad \mathsf P=a^{\ts}\otimes z^{\ts} \in \mathbb{R}^{(nm)\times(n+m)^2},
\eea
with $\mathsf Q_{ik\hat i\hat k}=a^{\ts}_{i\hat i}a^{\ts}_{k\hat k}$ and $\mathsf P_{ik\hat i\hat k}=a^{\ts}_{i\hat i}z^{\ts}_{k\hat k}$. More precisely, for each row (resp. column) of $\mathsf Q$, the row (resp. column) indices of $\mathsf Q_{ik\hat i\hat k}$ follow the double summation $\sum_{i=1}^n\sum_{k=1}^n$  (resp. $\sum_{\hat i=1}^{n+1}\sum_{\hat k=1}^{n+m}$). 
For any smooth function $f:\hR^{n+m}\rightarrow \hR$, we define $\mathsf X\in \mathbb{R}^{ (n+m)^2\times 1}$ by the vectorization of the Hessian matrix for function $f$ with $$\mathsf X_{\hat i\hat k}=\frac{\pa^2 f}{\pa x_{\hat i} \pa x_{\hat k}},\quad \textrm{for $\hat i, \hat k=1,\cdots, n+m$.}$$ 
\end{theorem}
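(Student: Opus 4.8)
\textbf{Proof proposal for the Information Bochner's formula (Theorem \ref{thm: information bochner}).}

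The plan is to reduce everything to a single integration-by-parts identity in the probability density $p\,dx$, after which the statement becomes an algebraic rearrangement. First I would expand each of the three operators $\widetilde\Gamma_2$, $\widetilde\Gamma_2^{z,\pi}$, $\Gamma_{\mathcal I_{a,z}}$ using their definitions from Definition \ref{defn:tilde gamma 2 znew}. The key observation is that, under the invariant measure weight, the operators $\widetilde L$ and $\widetilde L_z$ are self-adjoint with respect to $\int(\cdot)\,p\,dx$ only up to the drift produced by $f=\log(p/\pi)$ itself; so I would work with the $\pi$-weighted divergences $\div^\pi_a$, $\div^\pi_z$ and repeatedly use the identity $\int \div^\pi_a(F)\,g\,\pi\,dx=-\int\langle F, aa^\ts\nabla g\rangle\,\pi\,dx$ (and its $z$-analogue) to move all second-order derivatives off the ``test'' slot. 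This is the step that creates the cross terms matching $\mathfrak R_\pi$ in Definition \ref{def: curvature sum} — the pairing of $\Gamma_{1,\nabla(aa^\ts)}$ against $zz^\ts\nabla f$ and vice versa — and it is also where Assumption \ref{prop:main condition} first enters, because without it the term $z^\ts_k\nabla a^\ts_i$ is not expressible in the span $\{a^\ts_1,\dots,a^\ts_n\}$ and the bookkeeping below fails to close.

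Next I would introduce the vectorization $\mathsf X$ of $\nabla^2 f$ and the matrices $\mathsf Q=a^\ts\otimes a^\ts$, $\mathsf P=a^\ts\otimes z^\ts$ of \eqref{matrix Q P}. Expanding $\Gamma_1(\widetilde L f,f)$ and $\frac12\widetilde L\Gamma_1(f,f)$ produces, besides genuinely second-order contributions in $\mathsf X$, a large collection of first-order-in-$\nabla f$ terms carrying one or two derivatives of $a$ (and of $z$, $\gamma$, $\log\pi$). The strategy is to complete the square: write the full second-order part of the integrand as $\|\mathsf Q\mathsf X+\Lambda_1\|^2+\|\mathsf P\mathsf X+\Lambda_2\|^2$ where $\Lambda_1$, $\Lambda_2$ are precisely the first-order vectors defined in Definition \ref{def: curvature sum} (built from the structure functions $\lambda^{i'k}_l$, $\omega^{i'k}_l$, $\alpha_l$ and the parameter $\beta$). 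Expanding that square gives $\mathsf X^\ts(\mathsf Q^\ts\mathsf Q+\mathsf P^\ts\mathsf P)\mathsf X$, which must reproduce the raw $\|\nabla^2_{a}f\|^2$-type terms, plus the cross term $2(\mathsf Q\mathsf X)^\ts\Lambda_1+2(\mathsf P\mathsf X)^\ts\Lambda_2$, which after one more integration by parts (transferring the surviving $\mathsf X$ onto $p$ and onto the coefficient functions) contributes the remaining first-order terms; the leftover purely first-order remainder is exactly $\mathfrak R(\nabla f,\nabla f)$. Here the role of Assumption \ref{prop:main condition} is to guarantee that every ``stray'' term of the form $z^\ts_k\nabla a^\ts_i\nabla f$ that appears can be absorbed into either $\mathsf P\mathsf X$ (via the Hessian of $f$) or into the span-decomposition defining $\Lambda_1,\Lambda_2$, so that no term is left outside the quadratic form.

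The irreversible piece $\Gamma_{\mathcal I_{a,z}}(f,f)$ I would treat separately and more directly: using $(\widetilde L+\widetilde L_z)f\,\langle\nabla f,\gamma\rangle-\frac12\langle\nabla(\Gamma_1+\Gamma_1^z)(f,f),\gamma\rangle$ and integrating by parts against $p$, together with the constraint $\nabla\cdot(\pi\gamma)=0$ from the Decomposition Proposition, I would peel off the part proportional to $\beta$ and the part proportional to $(1-\beta)$, producing $\beta\,\mathfrak R_{\mathcal I_a}+(1-\beta)\mathfrak R_{\gamma_a}+\mathfrak R_{\gamma_z}$ plus a first-order term that feeds the $\frac\beta2$-contributions inside $\Lambda_1,\Lambda_2$. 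Finally I would collect all pieces and verify term-by-term that the first-order remainder coincides with the $\mathfrak R$ of Definition \ref{def: curvature sum}.

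\textbf{Main obstacle.} The conceptual content is modest — it is one weighted integration-by-parts plus completing the square — but the genuine difficulty is the sheer combinatorial volume of the expansion: tracking every term with one or two derivatives of $a$, $z$, $\gamma$, $\log\pi$, correctly assigning each to $\Lambda_1$, $\Lambda_2$, $\mathfrak R_a$, $\mathfrak R_z$, $\mathfrak R_\pi$, $\mathfrak R_{\mathcal I_a}$, $\mathfrak R_{\gamma_a}$, or $\mathfrak R_{\gamma_z}$, and checking that Assumption \ref{prop:main condition} is invoked exactly where the span-closure is needed and nowhere else. I expect the hardest single sub-step to be showing that the cross term $2(\mathsf Q\mathsf X)^\ts\Lambda_1+2(\mathsf P\mathsf X)^\ts\Lambda_2$, after integrating the Hessian $\mathsf X$ by parts, regenerates precisely the first-order terms already present — i.e., that the completion of the square is consistent and leaves no orphan terms; this is the place where an error in the definition of $\Lambda_1,\Lambda_2$ would surface. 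The detailed bookkeeping I would relegate to the appendix, as the paper already indicates.
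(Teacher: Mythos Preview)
Your high-level plan --- expand each Gamma operator, then complete the square in the Hessian vector $\mathsf X$ --- matches the paper's. But you have the mechanics of the square--completion backwards at the key step, and you misplace where Assumption \ref{prop:main condition} enters.

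After expanding $\widetilde\Gamma_2$, $\widetilde\Gamma_2^{z,\pi}$, and the pointwise $\beta$-fraction of $\Gamma_{\mathcal I_a}$, three kinds of terms appear: (i) terms quadratic in $\mathsf X$, namely $(\mathsf Q\mathsf X+\mathsf D)^\ts(\mathsf Q\mathsf X+\mathsf D)+(\mathsf P\mathsf X+\mathsf E)^\ts(\mathsf P\mathsf X+\mathsf E)$; (ii) terms \emph{bilinear} in $\mathsf X$ and $\nabla f$, namely $2\mathsf C^\ts\mathsf X+2\mathsf F^\ts\mathsf X+2\mathsf G^\ts\mathsf X+2\beta(\mathsf V^a)^\ts\mathsf X$, where $\mathsf C,\mathsf F,\mathsf G,\mathsf V^a$ (Definition \ref{definition: F G V}) depend linearly on $\nabla f$; and (iii) purely first-order terms giving $\mathfrak R_a,\mathfrak R_z,\mathfrak R_\pi,\beta\mathfrak R_{\mathcal I_a}$. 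Your proposal seems to merge (ii) into (iii). The actual work is to show that each bilinear term of type (ii), contracted with $\mathsf X$, lies \emph{pointwise} in the span of $\{(\mathsf Q\mathsf X)_{il},(\mathsf P\mathsf X)_{il}\}$, i.e.\ can be rewritten as $\widetilde\Lambda_1^\ts\mathsf Q\mathsf X+\widetilde\Lambda_2^\ts\mathsf P\mathsf X$. This is purely algebraic --- no integration by parts: one inserts the structure decomposition $\nabla_{i'}a^\ts_{k\hat k}=\sum_l\lambda^{i'k}_l a^\ts_{l\hat k}+\sum_l\lambda^{i'k}_{l+n}z^\ts_{l\hat k}$ from Definition \ref{def: curvature sum} into $\mathsf C,\mathsf F,\mathsf G,\mathsf V^a$, and the indices regroup into $\mathsf Q\mathsf X$ and $\mathsf P\mathsf X$ patterns. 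Once that is done, setting $\Lambda_1=\widetilde\Lambda_1+\mathsf D$, $\Lambda_2=\widetilde\Lambda_2+\mathsf E$ makes the square close exactly, and the leftover $-\Lambda_1^\ts\Lambda_1-\Lambda_2^\ts\Lambda_2+\mathsf D^\ts\mathsf D+\mathsf E^\ts\mathsf E$ joins the first-order pile to give $\mathfrak R$.

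This is also precisely, and only, where Assumption \ref{prop:main condition} is used. It is \emph{not} used in the $\div^\pi_z-\div^\pi_a$ identity for $\mathfrak R_\pi$ (that is pure integration by parts, established already in Step 1). It enters because the first term of $\mathsf G$ contains $z^\ts_{k\hat k}\,(z^\ts_k\nabla a^\ts_{i\hat i})\,a^\ts_i\mathsf U\,\mathsf X_{\hat i\hat k}$; expanding $z^\ts_k\nabla a^\ts_{i\hat i}$ a priori produces both $a^\ts_{l\hat i}$-pieces (giving $\mathsf P\mathsf X$-type terms) and $z^\ts_{l\hat i}$-pieces, and the latter would yield $(z^\ts_k z^\ts_l\nabla^2 f)$ contributions that are neither $\mathsf Q\mathsf X$ nor $\mathsf P\mathsf X$ and therefore cannot be absorbed into any square. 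Assumption \ref{prop:main condition} says exactly that those $z$-coefficients vanish. So the ``hardest sub-step'' is not an integration by parts on the cross term; it is this pointwise span inclusion, and it would fail without the assumption for reasons your proposal does not identify.
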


\begin{remark}[Optimal Bochner's formula]
We remark that the proposed information Bochner's formula can be asymmetric, which could be formulated into a more symmetric way. In other words, denote a quadratic matrix function by 
\begin{equation*}
\mathcal{F}(Y)=    [\mathsf Q\mathsf Y+\Lambda_1]^{\ts} [\mathsf Q\mathsf Y+\Lambda_1]+[\mathsf P\mathsf Y+\Lambda_2]^{\ts} [\mathsf P\mathsf Y+\Lambda_2],
\end{equation*}
where $Y\in\mathbb{R}^{(n+m)^2\times 1}$ is a vectorization of a symmetric matrix. We next consider a symmetric matrix optimization problem by
\begin{equation*}
Y^*=\arg\min_Y\mathcal{F}(Y). 
\end{equation*}
Then a Bochner's formula forms 
\begin{equation*}
\begin{split}
&  \|\mathfrak{Hess}_{\beta}f\|^2_{\mathrm{F}}
+\mathfrak{R}(\nabla f,\nabla f)\\
=&\mathcal{F}(X)-\mathcal{F}(Y^*)+\mathcal{F}(Y^*)+\mathfrak{R}(\nabla f,\nabla f).
\end{split}
\end{equation*}
We may call the term $\mathcal{F}(X)-\mathcal{F}(Y^*)$ the ``squared Hessian formula'' while name the term $\mathcal{F}(Y^*)+\mathfrak{R}(\nabla f,\nabla f) +\mathfrak{R}_{\mathcal I}(\nabla f,\nabla f)$ the ``Ricci curvature type tensor''. This optimal choice of Bochner's formula is left for future works. {See Remark \ref{hessian relation} in the Appendix for an example of the optimal Bochner's formula.}
\end{remark}

\subsection{Proof of Step 1.}\label{subsection: proof}
\begin{proposition}\label{prop1}
The following equality holds. 
\begin{equation*}
\frac{d}{dt}\mathcal{I}_{a,z}(p \|\pi)=-2\int \Gamma_{2,a,z,\gamma}(\log\frac{p }{\pi},\log\frac{p }{\pi}) p dx,
\end{equation*}
where $p $ is the solution of Fokker-Planck equation \eqref{FPE}.
\end{proposition}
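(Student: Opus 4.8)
The plan is to differentiate $\mathcal{I}_{a,z}(p\|\pi)$ in time and, after repeated integration by parts, recognize the integrand as $-2\Gamma_{2,a,z,\gamma}(f,f)$ weighted by $p\,dx$. Set $f=f(t,x)=\log\frac{p(t,x)}{\pi(x)}$, so that $\mathcal{I}_{a,z}(p\|\pi)=\int\big[\Gamma_1(f,f)+\Gamma_1^z(f,f)\big]p\,dx$. Since $\pi$ is time-independent, $\pa_t f=\pa_t p/p$, and inserting the reformulated Fokker--Planck equation \eqref{FPE1} together with $\nabla\cdot(\pi\gamma)=0$ yields the pointwise identity
\begin{equation*}
\pa_t f=\widetilde L f+\Gamma_1(f,f)+\la\nabla f,\gamma\ra .
\end{equation*}
The elementary facts I would record first are: (a) $\frac{1}{p}\nabla\cdot(p\,aa^{\ts}\nabla f)=\widetilde L f+\Gamma_1(f,f)$ and its $z$-analogue $\frac{1}{p}\nabla\cdot(p\,zz^{\ts}\nabla f)=\widetilde L_z f+\Gamma_1^z(f,f)$; and (b) the ``integration by parts against $p\,dx$'' identities $\int(\widetilde L\phi)\,p\,dx=-\int\Gamma_1(\phi,f)\,p\,dx$, $\int(\widetilde L_z\phi)\,p\,dx=-\int\Gamma_1^z(\phi,f)\,p\,dx$, $\int\div^{\pi}_a(F)\,p\,dx=-\int\la\nabla f,aa^{\ts}F\ra p\,dx$ and $\int\div^{\pi}_z(F)\,p\,dx=-\int\la\nabla f,zz^{\ts}F\ra p\,dx$, all of which follow from $\widetilde L\phi=\frac{1}{\pi}\nabla\cdot(\pi\,aa^{\ts}\nabla\phi)$, the analogous identity for $\widetilde L_z$, and $\nabla(p/\pi)=(p/\pi)\nabla f$.

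Next I would compute $\frac{d}{dt}\mathcal{I}_{a,z}(p\|\pi)$ using $\frac{d}{dt}\Gamma_1(f,f)=2\Gamma_1(f,\pa_t f)$ (and likewise for $\Gamma_1^z$) together with the identities in (a)--(b), reducing it to an integral against $p\,dx$ of products of $\widetilde L f$, $\widetilde L_z f$, $\Gamma_1(f,f)$, $\Gamma_1^z(f,f)$, $\la\nabla f,\gamma\ra$. In parallel I would expand $-2\int\Gamma_{2,a,z,\gamma}(f,f)\,p\,dx$ by unwinding Definition \ref{defn:tilde gamma 2 znew}, moving every $\widetilde L$, $\widetilde L_z$ and $\div^{\pi}$ onto $p$ via (b). Subtracting the two expressions: all terms linear in $\gamma$ cancel, because writing $G=\Gamma_1(f,f)+\Gamma_1^z(f,f)$ one has $\int\la\gamma,\nabla G+G\nabla f\ra p\,dx=0$ (integrate $\la\gamma,\nabla(Gp)\ra$ by parts and use $\nabla\cdot\gamma=-\la\nabla\log\pi,\gamma\ra$); and, after one further integration by parts via $\int\Gamma_1(G,f)\,p\,dx=-\int G\big(\widetilde L f+\Gamma_1(f,f)\big)p\,dx$, the diagonal products $\widetilde L f\cdot\Gamma_1(f,f)$ cancel and everything collapses to the single cross identity
\begin{equation*}
\int (\widetilde L_z f)\,\Gamma_1(f,f)\,p\,dx-\int(\widetilde L f)\,\Gamma_1^z(f,f)\,p\,dx=\int\la\nabla f,\ aa^{\ts}\Gamma_{1,\nabla(zz^{\ts})}(f,f)-zz^{\ts}\Gamma_{1,\nabla(aa^{\ts})}(f,f)\ra\, p\,dx ,
\end{equation*}
whose validity would complete the proof; note its right-hand side is exactly the contribution of the divergence-correction terms $\div^{\pi}_z(\Gamma_{1,\nabla(aa^{\ts})}(f,f))-\div^{\pi}_a(\Gamma_{1,\nabla(zz^{\ts})}(f,f))$ appearing in $\widetilde\Gamma_2^{z,\pi}$.

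I expect this cross identity to be the only real obstacle. To prove it I would integrate the left-hand side by parts against $p\,dx$ using $\widetilde L_z f=\frac{1}{\pi}\nabla\cdot(\pi\,zz^{\ts}\nabla f)$ and $\widetilde L f=\frac{1}{\pi}\nabla\cdot(\pi\,aa^{\ts}\nabla f)$; the two terms proportional to $\Gamma_1(f,f)\,\Gamma_1^z(f,f)$ cancel, leaving $-\int\la\nabla\Gamma_1(f,f),zz^{\ts}\nabla f\ra p\,dx+\int\la\nabla\Gamma_1^z(f,f),aa^{\ts}\nabla f\ra p\,dx$. Using $\nabla\Gamma_1(f,f)=\Gamma_{1,\nabla(aa^{\ts})}(f,f)+2(\nabla^2 f)(aa^{\ts}\nabla f)$ and $\nabla\Gamma_1^z(f,f)=\Gamma_{1,\nabla(zz^{\ts})}(f,f)+2(\nabla^2 f)(zz^{\ts}\nabla f)$, the two Hessian contributions $(zz^{\ts}\nabla f)^{\ts}(\nabla^2 f)(aa^{\ts}\nabla f)$ and $(aa^{\ts}\nabla f)^{\ts}(\nabla^2 f)(zz^{\ts}\nabla f)$ are equal by symmetry of $\nabla^2 f$ and cancel, producing exactly the right-hand side. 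This Hessian-symmetry cancellation is the structural reason the generalized $\Gamma_z$ corrections are built into $\widetilde\Gamma_2^{z,\pi}$; in particular, the argument nowhere uses Assumption \ref{prop:main condition}, consistent with its absence from the statement of Proposition \ref{prop1} (that assumption enters only in the local decomposition of Theorem \ref{thm: information bochner}). Collecting the cancellations gives $\frac{d}{dt}\mathcal{I}_{a,z}(p\|\pi)=-2\int\Gamma_{2,a,z,\gamma}(f,f)\,p\,dx$.
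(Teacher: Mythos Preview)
Your argument is correct and, at the structural level, coincides with the paper's: both reduce the identity to the single ``switching'' statement that links $\widetilde L_z f\cdot\Gamma_1$ with $\widetilde L f\cdot\Gamma_1^z$ and the two $\div^{\pi}$ correction terms. One integration by parts shows your cross identity
\[
\int\big[(\widetilde L_z f)\,\Gamma_1(f,f)-(\widetilde L f)\,\Gamma_1^z(f,f)\big]p\,dx
=\int\big\langle\nabla f,\ aa^{\ts}\Gamma_{1,\nabla(zz^{\ts})}(f,f)-zz^{\ts}\Gamma_{1,\nabla(aa^{\ts})}(f,f)\big\rangle p\,dx
\]
is exactly the paper's identity $\int[\Gamma_1^z(\widetilde Lf,f)-\Gamma_1(\widetilde L_zf,f)]\,p\,dx=\int[\div^{\pi}_z(\Gamma_{1,\nabla(aa^{\ts})})-\div^{\pi}_a(\Gamma_{1,\nabla(zz^{\ts})})]\,p\,dx$ rewritten on the other side of the pairing.

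Where your write-up differs is in the \emph{organization} and in the \emph{proof of this identity}. The paper treats $\mathcal{I}_a$ and $\mathcal{I}_z$ separately and then devotes roughly a page to the switching identity, passing through the mixed expression $\frac{1}{p}\nabla\cdot(\pi aa^{\ts}\nabla\frac{p}{\pi})\,\nabla\cdot(\pi zz^{\ts}\nabla\frac{p}{\pi})$ and several intermediate cancellations. You instead compute $\frac{d}{dt}\mathcal{I}_{a,z}$ and $-2\int\Gamma_{2,a,z,\gamma}\,p$ in parallel, subtract, and isolate the residue as your cross identity; then you dispatch that identity in two lines by expanding $\nabla\Gamma_1=\Gamma_{1,\nabla(aa^{\ts})}+2(\nabla^2 f)(aa^{\ts}\nabla f)$ (and its $z$-analogue) and observing that the two Hessian contributions cancel by symmetry of $\nabla^2 f$. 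This Hessian-symmetry argument is genuinely shorter and more transparent than the paper's derivation, and it makes explicit \emph{why} the $\div^{\pi}$ corrections in $\widetilde\Gamma_2^{z,\pi}$ are precisely what is needed when $\Gamma_1$ and $\Gamma_1^z$ do not commute. Your remark that Assumption~\ref{prop:main condition} plays no role here is also correct; the paper uses it only later, in the proof of Theorem~\ref{thm: information bochner}.
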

\begin{proof}
The proof combines calculations in \cite{FL, FL1}. We first present the outline of the proof here. Denote 
\begin{equation*}
\mathcal{I}_{a,z}(p \|\pi)=\mathcal{I}_a(p \|\pi)+\mathcal{I}_z(p \|\pi),    
\end{equation*}
where 
\begin{equation*}
    \mathcal{I}_a(p \|\pi)=\int (\nabla \log\frac{p }{\pi}, aa^{\ts}\nabla \log\frac{p }{\pi}) p  dx,\quad 
        \mathcal{I}_z(p \|\pi)=\int (\nabla \log\frac{p }{\pi}, zz^{\ts}\nabla \log\frac{p }{\pi}) p  dx. 
\end{equation*}
We observe the following facts. 

\noindent{\textbf{Claim $0$:}}
\begin{equation*}
    \frac{d}{dt}\mathcal{I}_a(p \|\pi)=-2\int \Big(\widetilde \Gamma_{2}+\Gamma_{\mathcal{I}_a}\Big)(\log\frac{p }{\pi},\log\frac{p }{\pi}) p  dx, 
\end{equation*}
and 
\begin{equation*}
    \frac{d}{dt}\mathcal{I}_z(p \|\pi)=-2\int \Big(\widetilde \Gamma_{2}^{z,\pi}+\Gamma_{\mathcal{I}_z}\Big)(\log\frac{p }{\pi},\log\frac{p }{\pi}) p  dx. 
\end{equation*}
From the Claim $0$, we obtain 
\begin{equation*}
 \frac{d}{dt}\mathcal{I}_{a,z}(p \|\pi)=\frac{d}{dt}\mathcal{I}_a(p \|\pi)+\frac{d}{dt}\mathcal{I}_z(p \|\pi)=-2\int \Gamma_{2,a,z,\gamma}(\log\frac{p }{\pi},\log\frac{p }{\pi})p dx,
\end{equation*}
which finishes the proof. Next, we prove Claim $0$ in details. Denote that $p $ solves the Fokker-Planck equation \eqref{FPE} and let $\tilde L^*$ be the forward Kolmogorov operator. 

Firstly, the dissipation of $\mathcal{I}_a(p \|\pi)$ follows 
\begin{equation*}
\begin{split}
&\frac{d}{dt}\mathcal{I}_a(p \|\pi)\\=& \frac{d}{dt}\int \Gamma_1(\log\frac{p }{\pi}, \log\frac{p }{\pi})p  dx    \\ 
=&\int \Gamma_1(\log\frac{p }{\pi}, \log\frac{p }{\pi})\partial_t p +2\Gamma_1(\partial_t\log\frac{p }{\pi}, \log\frac{p }{\pi})p  dx \\
=&\int \Gamma_1(\log\frac{p }{\pi}, \log\frac{p }{\pi})\partial_t p +2\Gamma_1(\frac{\partial_t p }{p }, \log\frac{p }{\pi})p  dx \\ 
=&\int \Gamma_1(\log\frac{p }{\pi}, \log\frac{p }{\pi})\partial_t p -2\frac{\nabla\cdot(p aa^{\ts}\nabla\log\frac{p }{\pi})}{p }\partial_t p  dx \\
=&\int \Gamma_1(\log\frac{p }{\pi}, \log\frac{p }{\pi})\partial_t p -2\Big(\la\nabla\log p , aa^{\ts}\nabla\log\frac{p }{\pi}\ra+\nabla\cdot(aa^{\ts}\nabla\log\frac{p }{\pi})\Big)\partial_t p  dx \\
=&\int \Gamma_1(\log\frac{p }{\pi}, \log\frac{p }{\pi})\partial_t p -2\Big(\la\nabla\log\frac{p }{\pi}, aa^{\ts}\nabla\log\frac{p }{\pi}\ra+\widetilde L\log\frac{p }{\pi}\Big)\partial_t p  dx \\
=&-2\int \Big\{\frac{1}{2}\Gamma_1(\log\frac{p }{\pi}, \log\frac{p }{\pi})\partial_t p +\widetilde L\log\frac{p }{\pi}\partial_t p  \Big\}dx \\
=&-2\int \Big\{\quad\frac{1}{2}\Gamma_1(\log\frac{p }{\pi}, \log\frac{p }{\pi})\nabla\cdot(p \gamma)+\widetilde L\log\frac{p }{\pi}\nabla\cdot(p \gamma)\\
&\hspace{1.5cm}+\frac{1}{2}\Gamma_1(\log\frac{p }{\pi}, \log\frac{p }{\pi})\widetilde L^*p +\widetilde L\log\frac{p }{\pi}\widetilde L^*p \Big\} dx \\
=&-2\int \Big\{-\frac{1}{2}\la\nabla\Gamma_1(\log\frac{p }{\pi}, \log\frac{p }{\pi}), \gamma\ra+\widetilde L\log\frac{p }{\pi}\la\nabla\log\frac{p }{\pi}, \gamma\ra\\
&\hspace{1.5cm}+\frac{1}{2}\widetilde L\Gamma_1(\log\frac{p }{\pi}, \log\frac{p }{\pi})-\Gamma_1( \widetilde L\log\frac{p }{\pi}, \log\frac{p }{\pi})\Big\} p dx \\
=&-2\int \Big\{\Gamma_{\mathcal{I}_a}(\log\frac{p }{\pi}, \log\frac{p }{\pi})+\widetilde \Gamma_2(\log\frac{p }{\pi}, \log\frac{p }{\pi})\Big\} p dx.
\end{split}
\end{equation*}
In above derivations, we apply the following fact:
\begin{equation*}
\nabla\cdot(p \gamma)=p \Big(\la\nabla\log p , \gamma\ra+\nabla\cdot\gamma\Big)=p \la\nabla\log\frac{p }{\pi}, \gamma\ra,    
\end{equation*}
where we use the identity $\nabla\cdot(\pi\gamma)=\pi\Big(\la\nabla\log\pi, \gamma\ra+\nabla\cdot\gamma\Big)=0$. 

Secondly, we compute the dissipation of $\mathcal{I}_z(p \|\pi)$. Denote 
\begin{equation*}
\widetilde L_z =(\nabla\log\pi, zz^{\ts}\nabla)+\nabla\cdot(zz^{\ts}\nabla).     
\end{equation*}
Hence
\begin{equation*}
\begin{split}
&\frac{d}{dt}\mathcal{I}_z(p \|\pi)\\=& \frac{d}{dt}\int \Gamma_1^z(\log\frac{p }{\pi}, \log\frac{p }{\pi})p  dx    \\ 
=&\int \Gamma_1^z(\log\frac{p }{\pi}, \log\frac{p }{\pi})\partial_t p +2\Gamma_1^z(\partial_t\log\frac{p }{\pi}, \log\frac{p }{\pi})p  dx \\
=&\int \Gamma_1^z(\log\frac{p }{\pi}, \log\frac{p }{\pi})\partial_t p +2\Gamma_1^z(\frac{\partial_t p }{p }, \log\frac{p }{\pi})p  dx \\
=&\int \Gamma_1^z(\log\frac{p }{\pi}, \log\frac{p }{\pi})\partial_t p -2\frac{\nabla\cdot(p zz^{\ts}\nabla\log\frac{p }{\pi})}{p }\partial_t p  dx \\
=&\int \Gamma_1^z(\log\frac{p }{\pi}, \log\frac{p }{\pi})\partial_t p -2\Big(\la\nabla\log p , zz^{\ts}\nabla\log\frac{p }{\pi}\ra+\nabla\cdot(zz^{\ts}\nabla\log\frac{p }{\pi})\Big)\partial_t p  dx\\
=&\int \Gamma_1^z(\log\frac{p }{\pi}, \log\frac{p }{\pi})\partial_t p -2\Big(\la\nabla\log\frac{p }{\pi}, zz^{\ts}\nabla\log\frac{p }{\pi}\ra+\widetilde L_z\log\frac{p }{\pi}\Big)\partial_t p  dx 
 \end{split}
 \end{equation*}
 \begin{equation*}
    \begin{split}
=&-2\int \Big\{\frac{1}{2}\Gamma_1^z(\log\frac{p }{\pi}, \log\frac{p }{\pi})\partial_t p +\widetilde 
L_z\log\frac{p }{\pi}\partial_t p  \Big\}dx \\
=&-2\int \Big\{\quad\frac{1}{2}\Gamma_1^z(\log\frac{p }{\pi}, \log\frac{p }{\pi})\nabla\cdot(p \gamma)+\widetilde L_z\log\frac{p }{\pi}\nabla\cdot(p \gamma)\\
&\hspace{1.5cm}+\frac{1}{2}\Gamma_1^z(\log\frac{p }{\pi}, \log\frac{p }{\pi})\widetilde L^*p +\widetilde L_z\log\frac{p }{\pi}\widetilde L^*p \Big\} dx \\
=&-2\int \Big\{\Gamma_{\mathcal{I}_z}(\log\frac{p }{\pi}, \log\frac{p }{\pi})+\frac{1}{2}\widetilde L_z\Gamma_1(\log\frac{p }{\pi}, \log\frac{p }{\pi})-\Gamma_1( \widetilde L_z\log\frac{p }{\pi}, \log\frac{p }{\pi})\Big\} p dx. 
\end{split}
\end{equation*}
We note that the above formula can not be used to derive Bochner's formula. Since the related third derivative tensors can not be eliminated. To handle this issue, we present the following equality 
\begin{equation}\label{identity}
\int\Big\{\frac{1}{2}\widetilde L_z\Gamma_1(\log\frac{p }{\pi}, \log\frac{p }{\pi})-\Gamma_1( \widetilde L_z\log\frac{p }{\pi}, \log\frac{p }{\pi})\Big\} p dx=\int  \widetilde\Gamma_2^{z,\pi}(\log\frac{p }{\pi},\log\frac{p }{\pi}) p  dx.
\end{equation}
The identity \eqref{identity} has been derived in \cite[Proposition $5.11$]{FL}. For completeness of this paper, we also present its derivation here.  Notice that
\begin{equation*}
    \begin{split}
&\frac{1}{2}\widetilde L_z\Gamma_1(\log\frac{p }{\pi}, \log\frac{p }{\pi})-\Gamma_1( \widetilde L_z\log\frac{p }{\pi}, \log\frac{p }{\pi})\\
=&\frac{1}{2}\widetilde L_z\Gamma_1(\log\frac{p }{\pi}, \log\frac{p }{\pi})-\Gamma_1^z( \widetilde L\log\frac{p }{\pi}, \log\frac{p }{\pi})+\Gamma_1^z( \widetilde L\log\frac{p }{\pi}, \log\frac{p }{\pi})-\Gamma_1( \widetilde L_z\log\frac{p }{\pi}, \log\frac{p }{\pi}).
\end{split}    
\end{equation*}

We next claim that 
\begin{equation*}
\begin{split}
    &\int\Big\{ \Gamma_1^z( \widetilde L\log\frac{p }{\pi}, \log\frac{p }{\pi})-\Gamma_1( \widetilde L_z\log\frac{p }{\pi}, \log\frac{p }{\pi})\Big\} p  dx\\
    =&\int \Big\{\div^{{\pi}}_z\Big(\Gamma_{1,\nabla(aa^{\ts})}(\log\frac{p }{\pi},\log\frac{p }{\pi} )\Big)-\div^{{\pi} }_a\Big(\Gamma_{1,\nabla(zz^{\ts})}(\log\frac{p }{\pi},\log\frac{p }{\pi} )\Big)\Big\} p  dx.
\end{split}
\end{equation*}
We derive it as follows. Firstly, 
\begin{equation*}
\begin{split}
&    \int \Gamma_1^z( \widetilde L\log\frac{p }{\pi}, \log\frac{p }{\pi}) p  dx\\
=& \int\Gamma_1^z\Big(\la\nabla\log \pi, aa^{\ts}\nabla \log\frac{p }{\pi}\ra+\nabla\cdot(aa^{\ts}\nabla\log\frac{p }{\pi}), \log\frac{p }{\pi}\Big)p  dx\\
=&\int\Gamma_1^z\Big(\Gamma_1(\log\pi, \log\frac{p }{\pi}), \log\frac{p }{\pi}\Big)p +\Gamma_1^z\Big(\nabla\cdot(aa^{\ts}\nabla\log\frac{p }{\pi}), \log\frac{p }{\pi}\Big)p  dx\\
=&\int\Gamma_1^z\Big(\Gamma_1(\log\pi, \log\frac{p }{\pi}), \log\frac{p }{\pi}\Big)p dx-\int \nabla\cdot(aa^{\ts}\nabla\log\frac{p }{\pi})\nabla\cdot(p zz^{\ts}\nabla\log\frac{p }{\pi})dx.
\end{split}
\end{equation*}
We note that the second term in the above formula can be written below. 
\begin{equation*}
\begin{split}
&-\int\nabla\cdot(aa^{\ts}\nabla\log\frac{p }{\pi})\nabla\cdot(p zz^{\ts}\nabla\log\frac{p }{\pi}) dx\\
=&-\int\nabla\cdot(\frac{\pi}{p }aa^{\ts}\nabla\frac{p }{\pi})\nabla\cdot(p zz^{\ts}\nabla\log\frac{p }{\pi}) dx\\
=&-\int \Big(\pi\la\nabla\frac{1}{p }, aa^{\ts}\nabla\frac{p }{\pi}\ra+\frac{1}{p }\nabla\cdot(\pi aa^{\ts}\nabla\frac{p }{\pi})\Big)\nabla\cdot(p zz^{\ts}\nabla\log\frac{p }{\pi}) dx\\
=&-\int \pi\la\nabla\frac{1}{p }, aa^{\ts}\nabla\frac{p }{\pi}\ra\nabla\cdot(p zz^{\ts}\nabla\log\frac{p }{\pi}) dx-\int \frac{1}{p }\nabla\cdot(\pi aa^{\ts}\nabla\frac{p }{\pi})\nabla\cdot(\pi zz^{\ts}\nabla\frac{p }{\pi}) dx\\
=&\int \frac{\pi}{p ^2}\la\nabla p , aa^{\ts}\nabla\frac{p }{\pi}\ra\nabla\cdot(p zz^{\ts}\nabla\log\frac{p }{\pi}) -\frac{1}{p }\nabla\cdot(\pi aa^{\ts}\nabla\frac{p }{\pi})\nabla\cdot(\pi zz^{\ts}\nabla\frac{p }{\pi}) dx\\
=&\int \la\nabla \log p , aa^{\ts}\nabla\log\frac{p }{\pi}\ra\nabla\cdot(p zz^{\ts}\nabla\log\frac{p }{\pi}) -\frac{1}{p }\nabla\cdot(\pi aa^{\ts}\nabla\frac{p }{\pi})\nabla\cdot(\pi zz^{\ts}\nabla\frac{p }{\pi}) dx\\
=&\int \Big\{\la\nabla \log \frac{p }{\pi}, aa^{\ts}\nabla\log\frac{p }{\pi}\ra\nabla\cdot(p zz^{\ts}\nabla\log\frac{p }{\pi})+\la\nabla\log\pi, aa^{\ts}\nabla\log\frac{p }{\pi}\ra\nabla\cdot(p zz^{\ts}\nabla\log\frac{p }{\pi})\\
&\qquad -\frac{1}{p }\nabla\cdot(\pi aa^{\ts}\nabla\frac{p }{\pi})\nabla\cdot(\pi zz^{\ts}\nabla\frac{p }{\pi})\Big\} dx\\
=&\int \Big\{-\Gamma_1^{z}(\Gamma_1(\log \frac{p }{\pi},\log\frac{p }{\pi}), \log\frac{p }{\pi})-\Gamma_1^z(\Gamma_1(\log\pi,\log\frac{p }{\pi}), \log\frac{p }{\pi}) p \\ &\qquad-\frac{1}{p }\nabla\cdot(\pi aa^{\ts}\nabla\frac{p }{\pi})\nabla\cdot(\pi zz^{\ts}\nabla\frac{p }{\pi})\Big\} dx\\
=&\int \Big\{-\Gamma_1^{z}(\Gamma_1(\log \frac{p }{\pi},\log\frac{p }{\pi}), \log\frac{p }{\pi})p -\Gamma_1^z(\Gamma_1(\log\pi,\log\frac{p }{\pi}), \log\frac{p }{\pi}) p \\ &\qquad-\frac{1}{p }\nabla\cdot(\pi aa^{\ts}\nabla\frac{p }{\pi})\nabla\cdot(\pi zz^{\ts}\nabla\frac{p }{\pi})\Big\} dx. 
\end{split}
\end{equation*}
We observe that 
\begin{equation*}
\begin{split}
    &-\int\Gamma_1^{z}(\Gamma_1(\log \frac{p }{\pi},\log\frac{p }{\pi}), \log\frac{p }{\pi}) p  dx\\
=&-2\int \nabla^2\log\frac{p }{\pi} (aa^{\ts}\nabla\log\frac{p }{\pi}, zz^{\ts}\nabla\log\frac{p }{\pi}) p  dx\\
&-\int \Big(\Gamma_{1,\nabla(aa^\ts)}(\nabla\log\frac{p }{\pi}, \nabla\log\frac{p }{\pi}), zz^{\ts}\nabla\log\frac{p }{\pi}\Big)p  dx\\
=&-2\int \nabla^2\log\frac{p }{\pi} (aa^{\ts}\nabla\log\frac{p }{\pi}, zz^{\ts}\nabla\log\frac{p }{\pi}) p  dx\\
&-\int \Big(\Gamma_{1,\nabla(aa^\ts)}(\nabla\log\frac{p }{\pi}, \nabla\log\frac{p }{\pi}), zz^{\ts}\nabla\frac{p }{\pi}\Big)\pi dx\\
=&-2\int \nabla^2\log\frac{p }{\pi} (aa^{\ts}\nabla\log\frac{p }{\pi}, zz^{\ts}\nabla\log\frac{p }{\pi}) p  dx\\
&+\int \frac{1}{\pi}\nabla\cdot \Big(\pi zz^{\ts}\Gamma_{1,\nabla(aa^\ts)}(\nabla\log\frac{p }{\pi}, \nabla\log\frac{p }{\pi})\Big) dx.
\end{split}
\end{equation*}
Combining the above three formulas, we have 
\begin{equation*}
\begin{split}
    &\int \Gamma_1^z( \widetilde L\log\frac{p }{\pi}, \log\frac{p }{\pi}) p  dx\\=&\int \frac{1}{\pi}\nabla\cdot \Big(\pi zz^{\ts}\Gamma_{1,\nabla(aa^\ts)}(\nabla\log\frac{p }{\pi}, \nabla\log\frac{p }{\pi})\Big) dx\\
    &-2\int \nabla^2\log\frac{p }{\pi} (aa^{\ts}\nabla\log\frac{p }{\pi}, zz^{\ts}\nabla\log\frac{p }{\pi}) p  dx-\int \frac{1}{p }\nabla\cdot(\pi aa^{\ts}\nabla\frac{p }{\pi})\nabla\cdot(\pi zz^{\ts}\nabla\frac{p }{\pi}) dx. 
\end{split}
\end{equation*}
Secondly, by switching the order of $a$ and $z$, we have 
\begin{equation*}
\begin{split}
    &\int \Gamma_1( \widetilde L_z\log\frac{p }{\pi}, \log\frac{p }{\pi}) p  dx\\=&\int \frac{1}{\pi}\nabla\cdot \Big(\pi aa^{\ts}\Gamma_{1,\nabla(zz^\ts)}(\nabla\log\frac{p }{\pi}, \nabla\log\frac{p }{\pi})\Big) dx\\
    &-2\int \nabla^2\log\frac{p }{\pi} (aa^{\ts}\nabla\log\frac{p }{\pi}, zz^{\ts}\nabla\log\frac{p }{\pi}) p  dx-\int \frac{1}{p }\nabla\cdot(\pi aa^{\ts}\nabla\frac{p }{\pi})\nabla\cdot(\pi zz^{\ts}\nabla\frac{p }{\pi}) dx. 
\end{split}
\end{equation*}
By subtracting the above two items, we derive the information Gamma $z$ operator. \qed
\end{proof}

\subsection{Proof of Step 2.} We first introduce the following definition.
\begin{definition}\label{definition: F G V}
For any vector field $\mathsf{U}\in\mathcal C^{\infty}(\hR^{n+m})$, 
we define vectors $\mathsf C, \mathsf F,\mathsf G, \mathsf V^a \in \mathbb R^{(n+m)^2\times 1}$, $\mathsf D \in \mathbb R^{n^2\times 1}$ and $\mathsf E\in \mathbb R^{({nm})\times 1}$ as below. For $\hat i,\hat k=1,\cdots,n+m$, 
\bea\label{vecter CDFEGV}
\mathsf C_{\hat i\hat k} &=& \sum_{i,k=1}^n\Big( a^{\ts}_{i\hat i}a^{\ts}_{i}\nabla a^{\ts}_{k\hat k}  -a^{\ts}_{i\hat k}a^{\ts}_{k}\nabla a^{\ts}_{i\hat i} \Big) a^{\ts}_k \mathsf{U},\quad \mathsf D_{ik}= a^{\ts}_{i}\nabla a^{\ts}_k\mathsf U,  \nonumber\\ 
\mathsf F_{\hat i\hat k}&=& \sum_{i=1}^n\sum_{k=1}^m\Big(  a^{\ts}_{i\hat i}a^{\ts}_{i}\nabla z^{\ts}_{k\hat k}- z^{\ts}_{k}\nabla a^{\ts}_{i\hat i} a^{\ts}_{i\hat k}\Big)z^{\ts}_k\mathsf U, \quad E_{ik}=a^{\ts}_{i}\nabla z^{\ts}_k \mathsf U,\nonumber \\
\mathsf G_{\hat i\hat k}&=& \sum_{i=1}^n\sum_{k=1}^m \left[\left( z^{\ts}_{k \hat k} z^{\ts}_{k}\nabla a^{\ts}_{i\hat i}a^{\ts}_{i} \mathsf U+a^{\ts}_{i\hat i}z^{\ts}_{k \hat k} z^{\ts}_{k}\nabla a^{\ts}_i \mathsf U\right)-\left(a^{\ts}_{i\hat i}a^{\ts}_{i}\nabla z^{\ts}_{k\hat k} z^{\ts}_k\mathsf U +z^{\ts}_{k\hat k}a^{\ts}_{i\hat i}a^{\ts}_{i}\nabla z^{\ts}_{k} \mathsf U\right)\right],\nonumber\\ 
\mathsf V_{\hat i\hat k}^a
&=& -\frac{1}{2}\sum_{i=1}^n  \gamma_{\hat k} a^{\ts}_{ i\hat i}(a^{\ts}_{ i}\mathsf U)+\frac{1}{2}  \sum_{k=1}^n a^{\ts}_{k\hat k} a^{\ts}_{k\hat i}  \la \mathsf U,\gamma \ra.\nonumber
\eea 
\end{definition}
We further divide all needed calculations into the following steps. Let $\mathsf U=\nabla f$ in Definition \ref{definition: F G V}. We have the following facts.

\begin{proposition} \label{prop: gammma I} {Denote $f=\log\frac{p}{\pi}$. Then}
\beaa
\int \Gamma_{\mathcal{I}_{a}}(f,f)p dx= \int\mathfrak R_{\gamma_a}(\nabla f,\nabla f)pdx,\quad 
\int \Gamma_{\mathcal{I}_{z}}(f,f)p dx=\int\mathfrak R_{\gamma_z}(\nabla f,\nabla f) pdx.
\eeaa
\end{proposition}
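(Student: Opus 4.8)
\textbf{Proof proposal for Proposition \ref{prop: gammma I}.} The two identities follow from the same computation with the matrix $a$ replaced by $z$, so I would only carry out the one for $\int\Gamma_{\mathcal I_a}(f,f)p\,dx=\int\mathfrak R_{\gamma_a}(\nabla f,\nabla f)p\,dx$. Here $\Gamma_{\mathcal I_a}(f,f)=\widetilde Lf\,\la\nabla f,\gamma\ra-\frac12\la\nabla\Gamma_1(f,f),\gamma\ra$ is the $a$-part of $\Gamma_{\mathcal I_{a,z}}$ in Definition \ref{defn:tilde gamma 2 znew}, and since $f=\log\frac p\pi$ we have $p\,dx=e^f\pi\,dx$. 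The plan is to write the reversible generator in divergence form, $\widetilde Lf=\frac1\pi\nabla\cdot(\pi aa^{\ts}\nabla f)$, and integrate $\int\widetilde Lf\,\la\nabla f,\gamma\ra\,p\,dx$ by parts against the test function $\la\nabla f,\gamma\ra e^f$. This produces $-\int\la aa^{\ts}\nabla f,\nabla\la\nabla f,\gamma\ra\ra p\,dx-\int\la\nabla f,\gamma\ra\Gamma_1(f,f)\,p\,dx$; expanding $\nabla\la\nabla f,\gamma\ra=\nabla^2f\,\gamma+\nabla\gamma\,\nabla f$ splits the first integral into a second-derivative piece $-\int\la\nabla^2f\,\gamma,aa^{\ts}\nabla f\ra p\,dx$ and the term $-\int\la\nabla\gamma\,\nabla f,aa^{\ts}\nabla f\ra p\,dx$, which is precisely the second summand of $\mathfrak R_{\gamma_a}(\nabla f,\nabla f)$.

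Next I would treat the remaining piece $-\frac12\la\nabla\Gamma_1(f,f),\gamma\ra$, which contributes a second copy of $-\int\la\nabla^2f\,\gamma,aa^{\ts}\nabla f\ra p\,dx$ together with $-\frac12\int\sum_{\hat k}\gamma_{\hat k}\la\nabla f,\nabla_{\hat k}(aa^{\ts})\nabla f\ra p\,dx$. To eliminate the unwanted second-derivative terms I would use the pointwise product-rule identity $2\la\nabla^2f\,\gamma,aa^{\ts}\nabla f\ra=\la\gamma,\nabla\Gamma_1(f,f)\ra-\sum_{\hat k}\gamma_{\hat k}\la\nabla f,\nabla_{\hat k}(aa^{\ts})\nabla f\ra$, which comes from $\Gamma_1(f,f)=\la\nabla f,aa^{\ts}\nabla f\ra$ and the symmetry of the Hessian. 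The key point is then that $\int\la\gamma,\nabla\Gamma_1(f,f)\ra p\,dx$ integrates by parts to a first-order expression: since $\nabla\cdot(\pi\gamma)=0$ we have $\nabla\cdot(\gamma p)=p\,\la\gamma,\nabla f\ra$, hence this integral equals $-\int\Gamma_1(f,f)\,\la\gamma,\nabla f\ra\,p\,dx$. This is the only place where the $\pi$-divergence-free structure of $\gamma$ enters.

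Collecting the contributions, the two $\int\la\nabla f,\gamma\ra\Gamma_1(f,f)p\,dx$ integrals cancel, the $\sum_{\hat k}\gamma_{\hat k}\la\nabla f,\nabla_{\hat k}(aa^{\ts})\nabla f\ra$ pieces combine with overall coefficient $+\frac12$, and what survives is $\int\big[\frac12\sum_{\hat k}\gamma_{\hat k}\la\nabla f,\nabla_{\hat k}(aa^{\ts})\nabla f\ra-\la\nabla\gamma\,\nabla f,aa^{\ts}\nabla f\ra\big]p\,dx=\int\mathfrak R_{\gamma_a}(\nabla f,\nabla f)p\,dx$. Running the identical argument with $\widetilde L_zf=\frac1\pi\nabla\cdot(\pi zz^{\ts}\nabla f)$ and $\Gamma_1^z(f,f)=\la\nabla f,zz^{\ts}\nabla f\ra$ in place of $\widetilde Lf$ and $\Gamma_1(f,f)$ yields the second identity.

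I do not anticipate a conceptual obstacle: the entire proof is a sequence of integrations by parts and applications of the product rule. The part requiring care is the bookkeeping — tracking signs through the two integration-by-parts steps, confirming that the two second-derivative integrals genuinely coincide so as to cancel, and checking that all boundary terms vanish under the standing decay/integrability hypotheses on $p$, $\pi$, $\gamma$ (there are none in the torus-valued oscillator example). The single structural ingredient is $\nabla\cdot(\pi\gamma)=0$, which is exactly what reduces $\int\la\gamma,\nabla\Gamma_1(f,f)\ra p\,dx$ from a second-order term to a first-order one.
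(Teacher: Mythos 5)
Your proposal is correct and follows essentially the same route as the paper's proof: a sequence of integrations by parts combined with the product rule for $\nabla\Gamma_1(f,f)$, where the only structural input is $\nabla\cdot(p\gamma)=p\,\la\nabla f,\gamma\ra$, itself a consequence of $\nabla\cdot(\pi\gamma)=0$. The paper merely organizes the bookkeeping slightly differently (it treats the $a$- and $z$-parts jointly, integrates the bare divergence $\nabla\cdot(aa^{\ts}\nabla f)$ against $p$ while handling the $\la aa^{\ts}\nabla\log\pi,\nabla f\ra$ drift separately, and performs the $\gamma$-integration by parts after collecting terms), but the cancellations and the final identity are the same as yours.
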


 \begin{proof}
 We show the proof of the case $\int [\Gamma_{\mathcal I_a}(f,f)+\Gamma_{\mathcal I_z}(f,f)]pdx$
 below. For $f=\log\frac{p}{\pi}$, we have  
\beaa
&&\int \Gamma_{\mathcal I_{a,z}}(f,f)p dx\\
&=&\int \Big[(\widetilde Lf+\widetilde L_zf)\la \nabla f,\gamma\ra-\frac{1}{2}\la \nabla(\Gamma_1(f,f)+\Gamma_1^z(f,f)),\gamma\ra  \Big]pdx  \\
&=&\int \Big[\nabla\cdot ((aa^{\ts}+zz^{\ts})\nabla f)\la \nabla f,\gamma\ra +\la \nabla f,\gamma\ra  \la \nabla f, (aa^{\ts}+zz^{\ts})\nabla \log \pi \ra \Big] p dx\\
&&+ \int \frac{1}{2}\nabla\cdot( \gamma p) \la \nabla f,(aa^{\ts}+zz^{\ts})\nabla f\ra dx\\
&=&\int \Big[\nabla\cdot ((aa^{\ts}+zz^{\ts})\nabla f)\la \nabla f,\gamma\ra +\la \nabla f,\gamma\ra  \la \nabla f, (aa^{\ts}+zz^{\ts})\nabla \log \pi \ra \Big] p dx\\
&&+\int \frac{1}{2}\la  \nabla f, \gamma \ra  \la \nabla f,(aa^{\ts}+zz^{\ts})\nabla f\ra pdx,
\eeaa
where we use the fact $\nabla\cdot(p\gamma)=\frac{\nabla\cdot(p\gamma)}{p}p=(\la \nabla\log p,\gamma\ra +\nabla\cdot\gamma)p=\la \nabla f,\gamma\ra p$. Notice that
\beaa
&&\int \nabla\cdot ((aa^{\ts}+zz^{\ts})\nabla f)\la \nabla f,\gamma\ra  p dx\\
&=&- \int\Big[ \la  (aa^{\ts}+zz^{\ts})\nabla f,\nabla \log p \ra \la \nabla f,\gamma\ra  +\la (aa^{\ts}+zz^{\ts})\nabla f,\nabla^2f \gamma\ra \Big] p dx\\
&&-\int \la  (aa^{\ts}+zz^{\ts})\nabla f,\nabla \gamma  \nabla f\ra p dx.
\eeaa
Combining the above two terms, we have
\beaa
&&\int \Gamma_{\mathcal I_{a,z}}(f,f)p dx\\
&=& \int -\frac{1}{2}(\nabla f,\gamma) (\nabla f, (aa^{\ts}+zz^{\ts})\nabla f)p\\
&&- \int\Big[ \la  (aa^{\ts}+zz^{\ts})\nabla f,\nabla \gamma \nabla f\ra  +\la (aa^{\ts}+zz^{\ts})\nabla f,\nabla^2 f \gamma\ra \Big] p dx\\
&=& \int -\frac{1}{2}\la \nabla p,\gamma\ra  \la \nabla f, (aa^{\ts}+zz^{\ts})\nabla f\ra +\int\frac{1}{2}\la \nabla\log\pi, \gamma\ra   \la \nabla f, (aa^{\ts}+zz^{\ts})\nabla f\ra  pdx\\
&&- \int\Big[\la (aa^{\ts}+zz^{\ts})\nabla f, \nabla \gamma \nabla f\ra  +\la (aa^{\ts}+zz^{\ts})\nabla f,\nabla^2 f\gamma\ra \Big] p dx.
\eeaa
In other words, 
\beaa 
&&\int \Gamma_{\mathcal I_{a,z}}(f,f)p dx\\
&=& \int \frac{1}{2} \nabla \cdot (\gamma \la \nabla f, (aa^{\ts}+zz^{\ts})\nabla f)\ra p +\int\frac{1}{2}\la \nabla\log\pi, \gamma\ra   \la \nabla f, (aa^{\ts}+zz^{\ts})\nabla f\ra  pdx\\
&&- \int\Big[\la (aa^{\ts}+zz^{\ts})\nabla f, \nabla \gamma \nabla f\ra  +\la (aa^{\ts}+zz^{\ts})\nabla f,\nabla^2 f\gamma\ra \Big] p dx\\
&=&\frac{1}{2}\int \la \gamma, \la\nabla f,\nabla(aa^{\ts}+zz^{\ts})\nabla f \ra \ra p dx-\int  \la \nabla \gamma\nabla f,(aa^{\ts}+zz^{\ts})\nabla f\ra pdx,
\eeaa
where we use the fact $\frac{\nabla\cdot (\pi \gamma)}{\pi}=\la \nabla\log\pi,\gamma\ra+\nabla\cdot\gamma=0$. The proof is completed.
\qed 
 \end{proof}

\begin{proposition}\label{prop: info gamma tensor} 
For any $f\in C^{\infty}(\mathbb{R}^{n+m})$, we have
\beaa
\Gamma_{\mathcal I_a}(f,f)&=& \mathfrak R_{\mathcal I_a}(\nabla f,\nabla f)+2\mathsf V_a^{\ts}\mathsf X.
\eeaa
\end{proposition}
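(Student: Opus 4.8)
The plan is to prove this pointwise identity by a direct Leibniz-rule expansion of the two summands of the $a$-part of the irreversible Gamma operator,
\[
\Gamma_{\mathcal I_a}(f,f)=\widetilde L f\,\la\nabla f,\gamma\ra-\tfrac12\la\nabla\Gamma_1(f,f),\gamma\ra,
\]
(with $\widetilde L$ and $\Gamma_1$ as in Definition~\ref{defn:tilde gamma 2 znew} and the derivation inside Proposition~\ref{prop1}), and then to sort every resulting monomial according to whether it contains a second derivative of $f$: the second-derivative monomials should pack up into $2\mathsf V_a^\ts\mathsf X$ and the remaining first-order monomials into $\mathfrak R_{\mathcal I_a}(\nabla f,\nabla f)$, with $\mathsf V^a$ and $\mathfrak R_{\mathcal I_a}$ as in Definitions~\ref{definition: F G V} and \ref{def: curvature sum}.

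First I would expand $\widetilde L f=\nabla\cdot(aa^\ts\nabla f)+\la aa^\ts\nabla\log\pi,\nabla f\ra$ by writing $(aa^\ts\nabla f)_{\hat i}=\sum_{i=1}^n a^\ts_{i\hat i}(a^\ts_i\nabla f)$ and differentiating; this yields
\[
\widetilde L f=(\nabla a)\circ(a^\ts\nabla f)+\sum_{i=1}^n a^\ts_i\nabla a^\ts_i\nabla f+\la aa^\ts\nabla\log\pi,\nabla f\ra+\sum_{\hat i,\hat k=1}^{n+m}(aa^\ts)_{\hat i\hat k}\,\partial^2_{x_{\hat i}x_{\hat k}}f,
\]
so that $\widetilde L f\,\la\nabla f,\gamma\ra$ contributes exactly the bracket $\la\nabla f,\gamma\ra\big[(\nabla a)\circ(a^\ts\nabla f)+\sum_i a^\ts_i\nabla a^\ts_i\nabla f+\la aa^\ts\nabla\log\pi,\nabla f\ra\big]$ of $\mathfrak R_{\mathcal I_a}$ together with the Hessian piece $\la\nabla f,\gamma\ra\sum_{\hat i,\hat k}(aa^\ts)_{\hat i\hat k}\,\partial^2_{x_{\hat i}x_{\hat k}}f$. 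Next, using $\Gamma_1(f,f)=\sum_{i=1}^n(a^\ts_i\nabla f)^2$ I have $-\tfrac12\la\nabla\Gamma_1(f,f),\gamma\ra=-\sum_{i=1}^n(a^\ts_i\nabla f)\sum_{k=1}^{n+m}\gamma_k\,\partial_{x_k}(a^\ts_i\nabla f)$, and one more Leibniz step on $\partial_{x_k}(a^\ts_i\nabla f)$ separates this into the first-order term $-\sum_{\hat i=1}^n\sum_{k=1}^{n+m}\gamma_k\,(\nabla_{x_k}a^\ts_{\hat i}\nabla f)(a^\ts_{\hat i}\nabla f)$ — the remaining term of $\mathfrak R_{\mathcal I_a}(\nabla f,\nabla f)$ — and the Hessian term $-\sum_{\hat i,\hat k=1}^{n+m}\gamma_{\hat k}\big(\sum_{i=1}^n a^\ts_{i\hat i}(a^\ts_i\nabla f)\big)\partial^2_{x_{\hat i}x_{\hat k}}f$, after relabelling indices and using $\partial^2_{x_{\hat k}x_{\hat i}}f=\partial^2_{x_{\hat i}x_{\hat k}}f$.

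Finally I would add the two Hessian contributions, which combine to $\sum_{\hat i,\hat k}\big[\sum_{k=1}^n a^\ts_{k\hat k}a^\ts_{k\hat i}\la\nabla f,\gamma\ra-\sum_{i=1}^n\gamma_{\hat k}a^\ts_{i\hat i}(a^\ts_i\nabla f)\big]\partial^2_{x_{\hat i}x_{\hat k}}f$, and match the bracket against $\mathsf V^a_{\hat i\hat k}$ from Definition~\ref{definition: F G V} to see it equals $2\sum_{\hat i,\hat k}\mathsf V^a_{\hat i\hat k}\mathsf X_{\hat i\hat k}=2\mathsf V_a^\ts\mathsf X$; here the one point needing care is that $\mathsf V^a$ is not symmetric in $(\hat i,\hat k)$ while $\mathsf X$ is, so the contraction automatically retains only the symmetric part, which is exactly what the expansion produces. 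Adding the first-order and second-order parts then gives $\Gamma_{\mathcal I_a}(f,f)=\mathfrak R_{\mathcal I_a}(\nabla f,\nabla f)+2\mathsf V_a^\ts\mathsf X$; as a consistency check, integrating $2\mathsf V_a^\ts\mathsf X$ against $p$ and integrating by parts reproduces the integrated identity $\int\Gamma_{\mathcal I_a}(f,f)p\,dx=\int\mathfrak R_{\gamma_a}(\nabla f,\nabla f)p\,dx$ of Proposition~\ref{prop: gammma I}. There is no real conceptual obstacle — the whole difficulty is bookkeeping, namely keeping the compact notations $a^\ts_i\nabla a^\ts_i\mathsf U$, $(\nabla a)\circ(a^\ts\mathsf U)$, the ranges of the repeated indices, and the factor $\tfrac12$ from differentiating a square straight, so that each expanded term lands on precisely the matching term of $\mathfrak R_{\mathcal I_a}$ or $\mathsf V^a$.
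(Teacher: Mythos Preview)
Your proposal is correct and follows essentially the same approach as the paper: both proofs expand $\widetilde L f$ via $\Delta_a f=(\nabla a)\circ(a^\ts\nabla f)+\sum_i a^\ts_i\nabla a^\ts_i\nabla f+\sum_i a^\ts_i a^\ts_i\nabla^2 f$, expand $\tfrac12\gamma\cdot\nabla\Gamma_1(f,f)$ by the Leibniz rule, and then separate first-order from second-order terms in $f$, matching them against $\mathfrak R_{\mathcal I_a}$ and $2\mathsf V_a^\ts\mathsf X$ respectively. Your remark that the symmetry of $\mathsf X$ absorbs the asymmetry of $\mathsf V^a$ is a helpful clarification that the paper leaves implicit.
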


\begin{proof}
From our definition, 
\beaa
\Gamma_{\mathcal I_a}(f,f)= \la \nabla f,\gamma(x)\ra \widetilde L f-\frac{1}{2}\la \nabla \Gamma_1(f,f),\gamma(x)\ra.
\eeaa
We look at the first term by 
\beaa
\la \nabla f,\gamma \ra \widetilde Lf&=&\la \nabla f,\gamma \ra\Delta_af+\la \nabla f,\gamma \ra\la aa^{\ts}\nabla\log\pi,\nabla f\ra\\
&=&\la \nabla f,\gamma \ra\Big[ \la (\nabla a), (a^{\ts}\nabla f)\ra +\sum_{i=1}^na^{\ts}_i\nabla a^{\ts}_i\nabla f+\sum_{i=1}^na^{\ts}_ia^{\ts}_i\nabla^2f\Big]\\
&&+\la \nabla f,\gamma \ra\la aa^{\ts}\nabla\log\pi,\nabla f\ra,
\eeaa
where we use the fact that
\beaa
\Delta_af&=&\nabla \cdot (aa^{\ts} \nabla f)\\
 &=& \la (\nabla a), (a^{\ts}\nabla f)\ra +\la a^{\ts}\nabla , (a^{\ts}\nabla f)\ra \\
 &=& \la (\nabla a),(a^{\ts}\nabla f)\ra +\sum_{i=1}^na^{\ts}_i\nabla a^{\ts}_i\nabla f+\sum_{i=1}^na^{\ts}_ia^{\ts}_i\nabla^2f.
\eeaa
We further reformulate the cross terms as below:
\beaa
\la \nabla f,\gamma \ra\sum_{i=1}^na^{\ts}_ia^{\ts}_i\nabla^2f&=&\la \nabla f,\gamma \ra\sum_{i=1}^na^{\ts}_{i\hat i}a^{\ts}_{i\hat k}\frac{\partial^2 f}{\partial x_{\hat i}\partial x_{\hat k}}.
\eeaa
As for the second term, we have 
\beaa
\frac{1}{2}\gamma \nabla \Gamma_{1}(f,f)&=&\frac{1}{2}\sum_{k=1}^{n+m}\gamma_{k}\frac{\pa}{\pa x_{k}}(\la a^T\nabla f,a^T\nabla f\ra_{\hR^n})\\
&=&\sum_{k, \hat i=1}^{n+m}\sum_{i=1}^n (\gamma_{k}\frac{\pa a^T_{i\hat i}}{\pa x_{k}}\frac{\pa f}{\pa x_{\hat i}}+\gamma_{k}a^T_{ i\hat i}\frac{\pa^2f}{\pa x_{k}\pa x_{\hat i}})(a^T_i\nabla f),\\
&=&\sum_{i=1}^n\sum_{k=1}^{n+m} \gamma_{k}\nabla_{x_{k}} a^{\ts}_{ i}\nabla f( a^{\ts}_{ i}\nabla f)+\sum_{i=1}^n\sum_{k=1}^{n+m}  \gamma_{k} a^{\ts}_{ i}\nabla^2f(a^{\ts}_{ i}\nabla f).
\eeaa
Combining the above terms and applying Definition \eqref{definition: F G V}, we derive $\Gamma_{\mathcal I_a}(f,f)$.
\qed
\end{proof}

\begin{proposition}\label{prop: gamma 2}
For any $f\in C^{\infty}(\mathbb{R}^{n+m})$, we have
\beaa  \widetilde  \Gamma_2(f,f) &=&(\mathsf Q \mathsf X+\mathsf D)^{\ts}\mathsf (\mathsf Q\mathsf X+\mathsf D)+ 2\mathsf C^{\ts}\mathsf X+ \mathfrak R_{a}(\nabla f,\nabla f),
\eeaa
\beaa
  \widetilde  \Gamma_2^z(f,f) &=& (\mathsf P\mathsf X+\mathsf E)^{\ts}(\mathsf P\mathsf X+\mathsf E)+2\mathsf F^{\ts}\mathsf X+\mathfrak R_{z}(\nabla f,\nabla f),
	\eeaa 
	\beaa
	\div^{\pi}_z(\Gamma_{\nabla(aa^{\ts})}f,f )-\div^{\pi}_a(\Gamma_{\nabla(zz^{\ts})}f,f )&=&\mathfrak{R}_{\pi}(\nabla f,\nabla f)+2\mathsf G^{\ts}\mathsf X.
	\eeaa
\end{proposition}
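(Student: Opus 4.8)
The plan is to verify the three identities by direct local (pointwise) computation: none of them uses integration by parts, only the product rule, the chain rule, and a single commutator identity. Throughout, write $X_i=a^{\ts}_i\nabla$ and $Z_k=z^{\ts}_k\nabla$ for the first-order operators attached to the columns of $a$ and $z$, so that $\Gamma_1(f,f)=\sum_{i=1}^n(X_if)^2$, $\Gamma_1^z(f,f)=\sum_{k=1}^m(Z_kf)^2$, and $\widetilde Lf=\Delta_af+\la aa^{\ts}\nabla\log\pi,\nabla f\ra$ with $\Delta_af=(\nabla a)\circ(a^{\ts}\nabla f)+\sum_i X_i(X_if)$, exactly the decomposition used in the proof of Proposition~\ref{prop: info gamma tensor}. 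The one fact that makes all three \emph{pointwise} identities possible is the elementary commutator computation
\[
X_i^2(Yf)-Y(X_i^2f)=[X_i,Y]X_if+X_i\big([X_i,Y]f\big),
\]
valid for every smooth vector field $Y$; since $[X_i,Y]$ is again a vector field, the right-hand side is \emph{second} order in $f$. Taking $Y=X_k$ shows that in $\widetilde\Gamma_2(f,f)=\tfrac12\widetilde L\Gamma_1(f,f)-\Gamma_1(\widetilde Lf,f)$ all third-order-in-$f$ contributions cancel; taking $Y=Z_k$ does the same for $\widetilde\Gamma_2^z(f,f)=\tfrac12\widetilde L\Gamma_1^z(f,f)-\Gamma_1^z(\widetilde Lf,f)$.

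For the first identity I would expand $\tfrac12\widetilde L\Gamma_1(f,f)$ by the product rule and isolate the contribution in which both outer derivatives fall on the $\nabla f$ factors of $\sum_i(X_if)^2$: this is exactly $\sum_{i,k=1}^n(X_iX_kf)^2$, and since $X_iX_kf=a^{\ts}_i a^{\ts}_k\nabla^2 f+a^{\ts}_i\nabla a^{\ts}_k\nabla f=(\mathsf Q\mathsf X)_{ik}+\mathsf D_{ik}$, this equals $(\mathsf Q\mathsf X+\mathsf D)^{\ts}(\mathsf Q\mathsf X+\mathsf D)$. Every other contribution of $\tfrac12\widetilde L\Gamma_1$ — those in which a derivative strikes a coefficient of $a$, strikes $\log\pi$, or belongs to the $(\nabla a)\circ$ piece — together with the full expansion of $-\Gamma_1(\widetilde Lf,f)$, reduces, after the automatic cancellation of third-order terms, to expressions that are either linear in the Hessian $\mathsf X$ or involve $\nabla f$ alone. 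Collecting the Hessian-linear part into a single inner product with $\mathsf X$ yields $2\mathsf C^{\ts}\mathsf X$ with $\mathsf C$ precisely as in Definition~\ref{definition: F G V}, while matching the $\nabla f$-only remainder against the five lines of $\mathfrak R_a$ in Definition~\ref{def: curvature sum} closes the identity: the first two lines are the surviving second-order combination $X_iX_iX_kf\,(X_kf)-X_kX_iX_if\,(X_kf)$ after the Hessian has been extracted, the third line collects the $aa^{\ts}\nabla\log\pi$ terms, the fourth is the $(\nabla a)\circ$ piece, and the last line $-\la a^{\ts}\nabla^2a\circ(a^{\ts}\mathsf U),a^{\ts}\mathsf U\ra$ comes from the case in which both outer derivatives of $\Delta_a$ hit the coefficients $a$ sitting inside $\Gamma_1$.

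The second identity is treated identically, with one of the two $X$-operators in $\Gamma_1$ replaced by $Z_k$ while $\widetilde L$ still carries $aa^{\ts}$: the commutator identity with $Y=Z_k$ removes the third-order terms, the diagonal contribution is $\sum_{i,k}(X_iZ_kf)^2=(\mathsf P\mathsf X+\mathsf E)^{\ts}(\mathsf P\mathsf X+\mathsf E)$ via $X_iZ_kf=(\mathsf P\mathsf X)_{ik}+\mathsf E_{ik}$, the Hessian-linear remainder assembles into $2\mathsf F^{\ts}\mathsf X$, and the $\nabla f$-only part is $\mathfrak R_z(\nabla f,\nabla f)$ — note the asymmetry that $\nabla^2 a^{\ts}$ still occurs but $\nabla^2 z^{\ts}$ does not, reflecting that only $\Gamma_1^z$, not $\widetilde L$, involves $z$. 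For the third identity I would write $\div^{\pi}_z\big(\Gamma_{1,\nabla(aa^{\ts})}(f,f)\big)=\tfrac1\pi\nabla\cdot\big(\pi\,zz^{\ts}\,\Gamma_{1,\nabla(aa^{\ts})}(f,f)\big)$ with $\Gamma_{1,\nabla(aa^{\ts})}(f,f)_{\hat k}=2\sum_i(a^{\ts}_i\nabla f)(\nabla_{\hat k}a^{\ts}_i\cdot\nabla f)$, expand the divergence so that the derivative strikes in turn the factor $\pi$ (producing the $(z^{\ts}\nabla\log\pi)$ terms of $\mathfrak R_\pi$), the factor $zz^{\ts}$ (the $\nabla z^{\ts}_k$ terms), the inner $\nabla f$ (the Hessian-linear piece together with the $(z^{\ts}_k\nabla a^{\ts}_i\nabla f)^2$-type quadratic piece), and the matrix $\nabla(aa^{\ts})$ (the $\nabla^2 a^{\ts}_i$ terms), and then subtract the same expression with $a$ and $z$ interchanged; the Hessian-linear terms collect into $2\mathsf G^{\ts}\mathsf X$ and the remainder is exactly $\mathfrak R_\pi(\nabla f,\nabla f)$.

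The only genuine difficulty is bookkeeping. Each identity involves on the order of a dozen index-bearing terms — with $\hat i,\hat k$ ranging over $\{1,\dots,n+m\}$ but $i,k$ over the smaller index sets, and with first and second derivatives of both $a$ and $z$ — and one must check that, after the third-order cancellation and after pulling the Hessian out into the vectors $\mathsf C$, $\mathsf F$, $\mathsf G$, what is left reassembles precisely into the bilinear forms written in Definitions~\ref{def: curvature sum} and~\ref{definition: F G V}. No step is conceptually subtle; the work is entirely in carrying out the index gymnastics term by term, which is done below.
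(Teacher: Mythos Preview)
Your approach is essentially the paper's own: the paper proves the first identity via Lemmas~\ref{lemma step 0}--\ref{lemma step 2}, which split off the $aa^{\ts}\nabla\log\pi$ drift, reduce the second-order part to $\frac{1}{2}(a^{\ts}\nabla)\circ(a^{\ts}\nabla|a^{\ts}\nabla f|^2)-\la a^{\ts}\nabla((a^{\ts}\nabla)\circ(a^{\ts}\nabla f)),a^{\ts}\nabla f\ra$, and then expand this as $T_1+T_2-T_3$ where $T_1=\sum_{i,k}|X_iX_kf|^2=(\mathsf Q\mathsf X+\mathsf D)^{\ts}(\mathsf Q\mathsf X+\mathsf D)$ and $T_2-T_3$ exhibits exactly the third-order cancellation you package in your commutator identity; the $z$-version is handled by parallel Lemmas~\ref{lemma z step 0}--\ref{lemma z step 2}, and the third identity is referred to \cite{FL}. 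One small correction to your narrative: $\mathfrak R_z$ \emph{does} contain a $\nabla^2 z^{\ts}_k$ term (first line of its definition), coming from the two $a^{\ts}\nabla$ operators of $\widetilde L$ both hitting the $z$-coefficient inside $\Gamma_1^z$, so when you do the bookkeeping be sure not to drop it.
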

The detailed derivations of the above propositions are given in the following lemmas. 


\qi{For the convenience of notation}, we recall the operator $\widetilde L$ as below:
\bea
\widetilde L f&=& \Delta_a f+\la aa^{\ts}\nabla \log\pi,\nabla f\ra,
\eea
with 
\bea
\Delta_af=\nabla\cdot(aa^{\ts}\nabla f),
\eea
and 
\bea
\Gamma_{2,a}=\frac{1}{2}\Delta_a\Gamma_1(f,f)-\Gamma_1(\Delta_af,f).
\eea
\subsubsection{Derivation of $\widetilde \Gamma_{2}(f,f)$}

\begin{lemma}\label{lemma step 0}
\beaa  \widetilde  \Gamma_2(f,f) &=& \Gamma_{2,a}(f,f)+
\sum_{i=1}^n\sum_{\hat k=1}^{n+m}\Big[ (aa^{\ts} \nabla\log \pi)_{\hat k} \nabla_{\hat k} a^{\ts}_i\nabla f-a_i^{\ts}\nabla (aa^{\ts}\nabla\log \pi)_{\hat k} \nabla_{\hat k}f\Big]a^{\ts}_i\nabla f.
\eeaa
\end{lemma}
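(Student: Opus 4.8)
The plan is to peel the drift off the reversible generator and reduce everything to the ``Laplacian'' Bochner bracket $\Gamma_{2,a}$. I would write $\widetilde L=\Delta_a+V$, where $\Delta_a f=\nabla\cdot(aa^{\ts}\nabla f)$ is the symmetric second-order operator whose carr\'e du champ is $\Gamma_1$, and $Vf:=\la aa^{\ts}\nabla\log\pi,\nabla f\ra=\sum_{\hat k=1}^{n+m}(aa^{\ts}\nabla\log\pi)_{\hat k}\,\pa_{\hat k}f$ is a first-order operator, hence a derivation. Since $\widetilde\Gamma_2(f,f)=\frac{1}{2}\widetilde L\,\Gamma_1(f,f)-\Gamma_1(\widetilde Lf,f)$ is linear in the generator (using bilinearity of $\Gamma_1$ in the second term), it splits as
\[
\widetilde\Gamma_2(f,f)=\Big[\frac{1}{2}\Delta_a\Gamma_1(f,f)-\Gamma_1(\Delta_af,f)\Big]+\Big[\frac{1}{2} V\Gamma_1(f,f)-\Gamma_1(Vf,f)\Big]=\Gamma_{2,a}(f,f)+\Big[\frac{1}{2} V\Gamma_1(f,f)-\Gamma_1(Vf,f)\Big].
\]
The whole job is then to evaluate the bracketed drift correction and recognize it as the stated $\pi$-term.

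For that evaluation I would expand $\Gamma_1(f,f)=\sum_{i=1}^n(a^{\ts}_i\nabla f)^2$ and use that $V$ is a derivation, so $\frac{1}{2} V\Gamma_1(f,f)=\sum_i(a^{\ts}_i\nabla f)\,V(a^{\ts}_i\nabla f)$ with $V(a^{\ts}_i\nabla f)=\sum_{\hat k}(aa^{\ts}\nabla\log\pi)_{\hat k}\big[\nabla_{\hat k}a^{\ts}_i\nabla f+a^{\ts}_i\nabla(\pa_{\hat k}f)\big]$, where (per the paper's convention) $\nabla_{\hat k}a^{\ts}_i\nabla f$ differentiates only the matrix entries $a^{\ts}_{i\hat i}$. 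On the other side $\Gamma_1(Vf,f)=\sum_i\big(a^{\ts}_i\nabla(Vf)\big)(a^{\ts}_i\nabla f)$ and $a^{\ts}_i\nabla(Vf)=\sum_{\hat k}\big[(a^{\ts}_i\nabla(aa^{\ts}\nabla\log\pi)_{\hat k})\pa_{\hat k}f+(aa^{\ts}\nabla\log\pi)_{\hat k}\,a^{\ts}_i\nabla(\pa_{\hat k}f)\big]$. Subtracting, the genuine second-derivative (Hessian) contributions $\sum_{i,\hat k}(aa^{\ts}\nabla\log\pi)_{\hat k}(a^{\ts}_i\nabla f)(a^{\ts}_i\nabla\pa_{\hat k}f)$ cancel identically, leaving precisely $\sum_{i=1}^n\sum_{\hat k=1}^{n+m}\big[(aa^{\ts}\nabla\log\pi)_{\hat k}\nabla_{\hat k}a^{\ts}_i\nabla f-a^{\ts}_i\nabla(aa^{\ts}\nabla\log\pi)_{\hat k}\,\pa_{\hat k}f\big](a^{\ts}_i\nabla f)$, which is the asserted identity (with $\mathsf U=\nabla f$, so $\mathsf U_{\hat k}=\pa_{\hat k}f$).

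I do not expect a real obstacle here: it is essentially a one-line Leibniz-rule computation, and the only thing that demands care is the index bookkeeping together with the notational convention that $\nabla_{\hat k}a^{\ts}_i\nabla f$ means the derivative of the coefficient matrix only (as opposed to $\pa_{\hat k}(a^{\ts}_i\nabla f)$, whose Hessian part is exactly what cancels). The point of this lemma is organizational: it isolates the $\nabla\log\pi$-dependent first-order correction so that the subsequent lemmas only need to establish the ``Laplacian'' Bochner identity for $\Gamma_{2,a}$.
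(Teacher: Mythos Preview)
Your proposal is correct and follows essentially the same approach as the paper: split $\widetilde L=\Delta_a+V$ with $V=\la aa^{\ts}\nabla\log\pi,\nabla\cdot\ra$, use linearity of the Gamma-two bracket in the generator to peel off $\Gamma_{2,a}$, and then compute $\frac{1}{2}V\Gamma_1(f,f)-\Gamma_1(Vf,f)$ in coordinates to see the Hessian contributions cancel. The paper writes the drift as $b=aa^{\ts}\nabla\log\pi$ rather than $V$, but the computation and the cancellation you identify are identical.
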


\begin{proof}[Proof of Lemma \ref{lemma step 0}] From our definition, one can get  
	\beaa  \widetilde  \Gamma_2(f,f) &=& \Gamma_{2,a}(f,f)+\frac{1}{2}\la aa^{\ts}\nabla\log \pi, \nabla \Gamma_1(f,f)\ra-\Gamma_1(\la aa^{\ts}\nabla\log\pi, \nabla f\ra,f),
	\eeaa
where we take $b=aa^{\ts}\nabla\log \pi$. We \qi{obtain}
\beaa
\frac{1}{2}b\nabla \Gamma_{1}(f,f)&=&\frac{1}{2}\sum_{\hat k=1}^{n+m}b_{\hat k}\frac{\pa}{\pa x_{\hat k}}(\la a^T\nabla f,a^T\nabla f\ra_{\hR^n})\\
&=&\sum_{\hat k,\hat i=1}^{n+m}\sum_{i=1}^n(b_{\hat k}\frac{\pa a^T_{i\hat i}}{\pa x_{\hat k}}\frac{\pa f}{\pa x_{\hat i}}+b_{\hat k}a^T_{i\hat i}\frac{\pa^2f}{\pa x_{\hat k}\pa x_{\hat i}})(a^T\nabla f)_i,
\eeaa
and
\beaa
-\Gamma_{1}(b\nabla f,f)&=&-\la a^T\nabla(b\nabla f),a^T\nabla f\ra _{\hR^n}\\
&=&-\sum_{i=1}^n\sum_{\hat k,\hat i=1}^{n+m}(a^{\ts}_{i\hat i}\frac{\pa b_{\hat k}}{\pa x_{\hat i}}\frac{\pa f}{\pa x_{\hat k}} +a^{\ts}_{i\hat i}b_{\hat k}\frac{\pa^2f}{\pa x_{\hat i}\pa x_{\hat k}}) (a^T\nabla f)_i.
\eeaa
Summing over all the terms , we have 
	\beaa
\widetilde	\Gamma_2(f,f)&=&\Gamma_{2,a}(f,f)+\sum_{\hat k,\hat i=1}^{n+m}\sum_{i=1}^n(b_{\hat k}\frac{\pa a^T_{i\hat i}}{\pa x_{\hat k}}\frac{\pa f}{\pa x_{\hat i}}-a^{\ts}_{i\hat i}\frac{\pa b_{\hat k}}{\pa x_{\hat i}}\frac{\pa f}{\pa x_{\hat k}} ) (a^T\nabla f)_i\\
&=& \Gamma_{2,a}(f,f)+\sum_{i=1}^n\sum_{\hat k=1}^{n+m}\Big[ (aa^{\ts} \nabla\log \pi)_{\hat k} \nabla_{\hat k} a^{\ts}_i\nabla f-a_i^{\ts}\nabla (aa^{\ts}\nabla\log \pi)_{\hat k} \nabla_{\hat k}f\Big]a^{\ts}_i\nabla f.
	\eeaa
\qed
\end{proof}{
\begin{lemma}\label{lemma step 1}
	\bea\label{gamma 2 relation}
	&&\frac{1}{2}\Delta_a \Gamma_{1}(f,f)-\Gamma_{1}(\Delta_a f,f)\\
	&=&\frac{1}{2}\la a^{\ts}\nabla, (a^{\ts}\nabla |a^{\ts}\nabla f|^2 )\ra -\la a^{\ts} \nabla \la (a^{\ts}\nabla) ,(a^{\ts}\nabla f)\ra ,a^{\ts}\nabla f\ra_{\hR^n}\nonumber \\
	&&+\la \nabla a,\sum_{k=1}^n\Big[ a^{\ts}\nabla a^{\ts}_k\nabla f-a^{\ts}_k \nabla a^{\ts}\nabla f)\Big] a^{\ts}_k\nabla f\ra -\la  a^{\ts} \nabla^2 a\circ  (a^{\ts}\nabla f),a^{\ts}\nabla f\ra.\nonumber
	\eea
	\end{lemma}}
\begin{proof}[Proof of Lemma \ref{lemma step 1} ]
According to our definition, we have 
 \beaa\label{gamma 2 --1}
 \Delta_a\Gamma_{1} (f,f)&=&\nabla\cdot (aa^{\ts}|a^{\ts}\nabla f|^2)\\
 &=&\la \nabla a , a^{\ts}\nabla |a^{\ts}\nabla f|^2\ra +\la a^{\ts}\nabla, a^{\ts}\nabla |a^{\ts}\nabla f|^2 \ra .
 \eeaa
Similarly, we have 
\beaa
\nabla\cdot (aa^{\ts} \nabla f)&=&\la \nabla a,  a^{\ts}\nabla f\ra +\la a^{\ts}\nabla, a^{\ts}\nabla f\ra ,
\eeaa
which gives us 
\beaa
\Gamma_{1}(\Delta_af,f)&=&\la a^{\ts} \nabla (\nabla\cdot (aa^{\ts}\nabla f) ),a^{\ts}\nabla f\ra, \\
&=& \la a^{\ts} \nabla \left(\la \nabla a, a^{\ts}\nabla f\ra \right),a^{\ts}\nabla f\ra +\la a^{\ts} \nabla (\la a^{\ts}\nabla, a^{\ts}\nabla f\ra ),a^{\ts}\nabla f\ra\nonumber\\
&=&\la  a^{\ts} \nabla^2 a\circ  (a^{\ts}\nabla f),a^{\ts}\nabla f\ra+\la \nabla a\circ (a^{\ts} \nabla (a^{\ts}\nabla f)),a^{\ts}\nabla f\ra\nonumber\\
&&+\la a^{\ts} \nabla (\la a^{\ts}\nabla, a^{\ts}\nabla f\ra ),a^{\ts}\nabla f\ra.
\eeaa
Combining the above terms, we get  
\beaa
&&\frac{1}{2}\Delta_a \Gamma_{1} (f,f)-\Gamma_1 (\Delta_a f,f)\\
&=&\frac{1}{2}\la a^{\ts}\nabla,  (a^{\ts}\nabla |a^{\ts}\nabla f|^2 )\ra -\la a^{\ts} \nabla (\la a^{\ts}\nabla, a^{\ts}\nabla f\ra ),a^{\ts}\nabla f\ra+\underbrace{\frac{1}{2} \la \nabla a , a^{\ts}\nabla |a^{\ts}\nabla f|^2\ra }_{ \textbf{I}}\\
&& -\la  \left(a^{\ts} \nabla^2 a\circ  (a^{\ts}\nabla f)\right),a^{\ts}\nabla f\ra-\la \nabla a\circ (a^{\ts} \nabla (a^{\ts}\nabla f)),a^{\ts}\nabla f\ra\nonumber \cdots \textbf{II} 
\eeaa
Recalling the fact $a^{\ts}_{i\hat i}=a_{\hat i i}$, we have 
\beaa
\textbf{I}&=&\frac{1}{2} \la \nabla a, a^{\ts}\nabla |a^{\ts}\nabla f|^2\ra  \\
&=& \la \nabla a, (\sum_{k=1}^n a^{\ts}\nabla(a^{\ts}_k\nabla f)a^{\ts}_k\nabla f)\ra \\
&=& \la \nabla a, (\sum_{k=1}^n a^{\ts}\nabla a^{\ts}_k\nabla fa^{\ts}_k\nabla f)\ra+ \la \nabla a, \sum_{k=1}^n a^{\ts}a^{\ts}_k\nabla^2 fa^{\ts}_k\nabla f\ra,
\eeaa
and 
\beaa
\textbf{II}&=&-\la  a^{\ts} \nabla^2 a\circ  (a^{\ts}\nabla f),a^{\ts}\nabla f\ra- \sum_{k=1}^n\la \nabla a, (a^{\ts}_k \nabla (a^{\ts}\nabla f))\ra a^{\ts}_k\nabla f \\
&=& -\la  a^{\ts} \nabla^2 a\circ  (a^{\ts}\nabla f),a^{\ts}\nabla f\ra- \sum_{k=1}^n\la \nabla a, (a^{\ts}_k  a^{\ts}\nabla^2 f)\ra a^{\ts}_k\nabla f\\
&&-\sum_{k=1}^n\la \nabla a, (a^{\ts}_k \nabla a^{\ts}\nabla f) a^{\ts}_k\nabla f\ra .
\eeaa
Subtracting the above two terms, we \qi{obtain}
\beaa
\textbf{I}-\textbf{
II}&=& \la \nabla a, \sum_{k=1}^n\Big[ a^{\ts}\nabla a^{\ts}_k\nabla f-a^{\ts}_k \nabla a^{\ts}\nabla f)\Big] a^{\ts}_k\nabla f\ra -\la  a^{\ts} \nabla^2 a\circ  (a^{\ts}\nabla f),a^{\ts}\nabla f\ra.
\eeaa
We now derive the following step
\beaa
&&\frac{1}{2}\Delta_a \Gamma_{1}(f,f)-\Gamma_{1}(\Delta_a f,f)\\
&=&\frac{1}{2}\la a^{\ts}\nabla, (a^{\ts}\nabla |a^{\ts}\nabla f|^2 )\ra -\la a^{\ts} \nabla (\la a^{\ts}\nabla,a^{\ts}\nabla f\ra ),a^{\ts}\nabla f\ra \\
&&+\la \nabla a, \sum_{k=1}^n\Big[ a^{\ts}\nabla a^{\ts}_k\nabla f-a^{\ts}_k \nabla a^{\ts}\nabla f)\Big] a^{\ts}_k\nabla f\ra -\la  a^{\ts} \nabla^2 a\circ  (a^{\ts}\nabla f)\,a^{\ts}\nabla f\ra.
\eeaa
The proof is completed. \qed 
 \end{proof}
 
\begin{lemma}\label{lemma step 2}
	\beaa
	&& \frac{1}{2}\la a^{\ts}\nabla, a^{\ts}\nabla |a^{\ts}\nabla f|^2 \ra -\la a^{\ts} \nabla (\la a^{\ts}\nabla,a^{\ts}\nabla f\ra ),a^{\ts}\nabla f\ra \\
	&=&[\mathsf Q\mathsf X+\mathsf D]^{\ts}[\mathsf Q\mathsf X+\mathsf D]+2\mathsf C^{\ts}\mathsf X\\ 
	&&+ \sum_{i,k=1}^n a^{\ts}_i\nabla a^{\ts}_i\nabla  a^{\ts}_k \nabla f (a^{\ts}_k\nabla f)+\sum_{i,k=1}^n a^{\ts}_i a^{\ts}_i\nabla^2a^{\ts}_k \nabla f (a^{\ts}_k\nabla f)  \\
	&& -\sum_{i,k=1}^n a^{\ts}_k\nabla a^{\ts}_i\nabla a^{\ts}_i\nabla f(a_k^{\ts}\nabla f)-\sum_{i,k=1}^n a^{\ts}_k a^{\ts}_i \nabla^2a^{\ts}_i\nabla f(a_k^{\ts}\nabla f),
\eeaa 
where the vectors $\mathsf C$, $\mathsf D$ and the matrix $\mathsf Q$ are defined in Definition \ref{definition: F G V} and formula \eqref{matrix Q P}.
\end{lemma}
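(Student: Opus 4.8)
The plan is to expand both operators on the left-hand side by the Leibniz rule, written out in Cartesian indices $a^{\ts}_k\nabla f=\sum_{\hat k}a^{\ts}_{k\hat k}\partial_{\hat k}f$, and then to sort the resulting monomials by the total order of derivatives falling on $f$. First I would treat the term $\frac12\big(a^{\ts}\nabla\circ(a^{\ts}\nabla|a^{\ts}\nabla f|^2)\big)$. Using $|a^{\ts}\nabla f|^2=\sum_{k=1}^n(a^{\ts}_k\nabla f)^2$ and differentiating twice yields
\beaa
\frac12\big(a^{\ts}\nabla\circ(a^{\ts}\nabla|a^{\ts}\nabla f|^2)\big)
&=&\sum_{i,k=1}^n\big[a^{\ts}_i\nabla\big(a^{\ts}_i\nabla(a^{\ts}_k\nabla f)\big)\big](a^{\ts}_k\nabla f)\\
&&+\sum_{i,k=1}^n\big(a^{\ts}_i\nabla(a^{\ts}_k\nabla f)\big)^2 .
\eeaa
The key elementary observation is $a^{\ts}_i\nabla(a^{\ts}_k\nabla f)=\mathsf D_{ik}+(\mathsf Q\mathsf X)_{ik}$, which is immediate from Definition~\ref{definition: F G V} and \eqref{matrix Q P}; hence the second sum above is exactly $[\mathsf Q\mathsf X+\mathsf D]^{\ts}[\mathsf Q\mathsf X+\mathsf D]$, the ``squared Hessian'' block of the claimed formula.

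It then remains to identify the difference $\sum_{i,k}\big[a^{\ts}_i\nabla\big(a^{\ts}_i\nabla(a^{\ts}_k\nabla f)\big)\big](a^{\ts}_k\nabla f)-\sum_{i,k}\big[a^{\ts}_k\nabla\big(a^{\ts}_i\nabla(a^{\ts}_i\nabla f)\big)\big](a^{\ts}_k\nabla f)$, the second sum being the componentwise form of the left-hand term $-\la a^{\ts}\nabla((a^{\ts}\nabla)\circ(a^{\ts}\nabla f)),a^{\ts}\nabla f\ra_{\hR^n}$. Expanding each triple-nested operator by Leibniz produces, in each of the two sums, exactly three families of monomials: one carrying $\partial^3 f$, one carrying $\partial^2 f$ times a single first derivative of $a^{\ts}$, and one carrying $\partial f$ times either two first derivatives of $a^{\ts}$ or one second derivative of $a^{\ts}$. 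The cubic contributions are $\sum_{i,k}a^{\ts}_{i\hat j}a^{\ts}_{i\hat i}a^{\ts}_{k\hat k}\partial^3_{\hat j\hat i\hat k}f\,(a^{\ts}_k\nabla f)$ from the first sum and $\sum_{i,k}a^{\ts}_{k\hat l}a^{\ts}_{i\hat j}a^{\ts}_{i\hat i}\partial^3_{\hat l\hat j\hat i}f\,(a^{\ts}_k\nabla f)$ from the second; relabelling the contracted indices and using the full symmetry of $\partial^3 f$ shows these coincide, so they cancel in the difference. This cancellation of all third-order terms is precisely the mechanism that makes a Bochner-type reduction possible.

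For the remaining terms I would match them against the right-hand side group by group. The $\partial f$-monomials with two first derivatives of $a^{\ts}$ assemble into $\sum_{i,k}a^{\ts}_i\nabla a^{\ts}_i\nabla a^{\ts}_k\nabla f\,(a^{\ts}_k\nabla f)$ from the first sum and $-\sum_{i,k}a^{\ts}_k\nabla a^{\ts}_i\nabla a^{\ts}_i\nabla f\,(a^{\ts}_k\nabla f)$ from the second, while those with a second derivative of $a^{\ts}$ give $\sum_{i,k}a^{\ts}_ia^{\ts}_i\nabla^2 a^{\ts}_k\nabla f\,(a^{\ts}_k\nabla f)$ and $-\sum_{i,k}a^{\ts}_ka^{\ts}_i\nabla^2 a^{\ts}_i\nabla f\,(a^{\ts}_k\nabla f)$; these are exactly the four sums in the statement. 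Finally the $\partial^2 f$-monomials, each paired with $(a^{\ts}_k\nabla f)$, are handled by symmetrizing in the two Hessian indices and relabelling: the contributions in which the outermost $a^{\ts}$ factor is differentiated cancel between the two sums, and what is left is $2\sum_{\hat i,\hat k}\big(\sum_{i,k}\big(a^{\ts}_{i\hat i}a^{\ts}_i\nabla a^{\ts}_{k\hat k}-a^{\ts}_{i\hat k}a^{\ts}_k\nabla a^{\ts}_{i\hat i}\big)a^{\ts}_k\nabla f\big)\partial^2_{\hat i\hat k}f=2\mathsf C^{\ts}\mathsf X$ by the definition of $\mathsf C$ in \eqref{vecter CDFEGV}. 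Collecting all pieces gives the lemma.

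The main obstacle is purely the index bookkeeping in the two triple-nested expansions: one must check that \emph{every} $\partial^3 f$ monomial cancels after the symmetrization, and then confirm that the surviving $\partial^2 f\cdot\partial f$ remainder reproduces $\mathsf C$ with exactly the asymmetric index pattern of \eqref{vecter CDFEGV}, namely $a^{\ts}_{i\hat i}a^{\ts}_i\nabla a^{\ts}_{k\hat k}$ paired against $a^{\ts}_{i\hat k}a^{\ts}_k\nabla a^{\ts}_{i\hat i}$. No tool beyond the product rule and the symmetry of higher partial derivatives is needed; the statement is a pointwise algebraic identity.
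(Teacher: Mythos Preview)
Your proposal is correct and follows essentially the same route as the paper: write the left-hand side as $\mathbf T_1+\mathbf T_2-\mathbf T_3$ with $\mathbf T_1=\sum_{i,k}|(a^{\ts}_i\nabla)(a^{\ts}_k\nabla f)|^2=[\mathsf Q\mathsf X+\mathsf D]^{\ts}[\mathsf Q\mathsf X+\mathsf D]$, expand $\mathbf T_2,\mathbf T_3$ by Leibniz, cancel the $\partial^3 f$ blocks by symmetry, and sort the remainder into the four $\partial f$-sums and $2\mathsf C^{\ts}\mathsf X$. One small correction to your description of the $\partial^2 f$ cancellation: the \emph{outermost} $a^{\ts}$ factor is the differentiating operator and is never itself differentiated; the term that actually cancels is the one in $\mathbf T_2$ where the outer $a^{\ts}_i$ differentiates the \emph{middle} $a^{\ts}_i$, which equals (after swapping Hessian indices) the term in $\mathbf T_3$ where the middle $a^{\ts}_i$ differentiates the inner $a^{\ts}_i$ --- leaving exactly $2(c)-2(d')=2\mathsf C^{\ts}\mathsf X$ as you conclude.
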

\begin{proof}[Proof of Lemma \ref{lemma step 2}]
	\beaa
	&&\frac{1}{2}\la a^{\ts}\nabla,  a^{\ts}\nabla |a^{\ts}\nabla f|^2 \ra -\la a^{\ts} \nabla (\la a^{\ts}\nabla, a^{\ts}\nabla f\ra ),a^{\ts}\nabla f\ra\\
	&=&\frac{1}{2}\sum_{i,k=1}^n(a^{\ts}_i\nabla)(a_i^{\ts}\nabla)|a^{\ts}_k\nabla f|^2-\sum_{i,k=1}^n (a^{\ts}_k\nabla)[(a^{\ts}_i\nabla)(a^{\ts}_i\nabla f)](a_k^{\ts}\nabla f)\\
	&=&\sum_{i,k=1}^na^{\ts}_i\nabla\big[ (a^{\ts}_i\nabla) (a^{\ts}_k\nabla f)(a^{\ts}_k\nabla f)]-\sum_{i,k=1}^n (a^{\ts}_k\nabla)[(a^{\ts}_i\nabla)(a^{\ts}_i\nabla f)](a_k^{\ts}\nabla f)\\
	&=&\sum_{i,k=1}^n |(a^{\ts}_i\nabla) (a^{\ts}_k\nabla f)|^2+\sum_{i,k=1}^n \la (a^{\ts}_i\nabla) (a^{\ts}_i\nabla ) (a^{\ts}_k \nabla f) (a^{\ts}_k\nabla f) -\sum_{i,k=1}^n (a^{\ts}_k\nabla)[(a^{\ts}_i\nabla)(a^{\ts}_i\nabla f)](a_k^{\ts}\nabla f)\\
	&=& \textbf{T}_1+\textbf{T}_2-\textbf{T}_3.
	\eeaa
We then expand $\textbf{T}_1$, $\textbf{T}_2$ and $\textbf{T}_3$.
	\beaa
	\textbf{T}_1&=&\sum_{i,k=1}^n |(a^{\ts}_i\nabla) (a^{\ts}_k\nabla f)|^2
	=\sum_{i,k=1}^n|a^{\ts}_ia^{\ts}_k\nabla^2f+a^{\ts}_i\nabla a^{\ts}_k\nabla f |^2 \\
	&=& \quad\sum_{i,k=1}^n \Big[|a^{\ts}_ia^{\ts}_k\nabla^2f|^2+2(a^{\ts}_ia^{\ts}_k\nabla^2f)(a^{\ts}_i\nabla a^{\ts}_k\nabla f )+|a^{\ts}_i\nabla a^{\ts}_k\nabla f |^2 \Big] \\
	&=& [\mathsf Q\mathsf X+\mathsf D]^{\ts}[\mathsf Q\mathsf X+\mathsf D],\\ 
	\textbf{T}_2&=&\sum_{i,k=1}^n \la (a^{\ts}_i\nabla) (a^{\ts}_i\nabla ) (a^{\ts}_k \nabla f) (a^{\ts}_k\nabla f)\\
	&=&\quad\sum_{i,k=1}^n a^{\ts}_ia^{\ts}_ia^{\ts}_k \nabla^3 f (a^{\ts}_k\nabla f)+ \sum_{i,k=1}^n a^{\ts}_i\nabla a^{\ts}_i\nabla  a^{\ts}_k \nabla f (a^{\ts}_k\nabla f)+\sum_{i,k=1}^n a^{\ts}_i a^{\ts}_i\nabla^2a^{\ts}_k \nabla f (a^{\ts}_k\nabla f)  \\
	&&+2\sum_{i,k=1}^n  a^{\ts}_i a^{\ts}_i\nabla a^{\ts}_k \nabla^2 f (a^{\ts}_k\nabla f)+\sum_{i,k=1}^n  a^{\ts}_i\nabla a^{\ts}_i a^{\ts}_k \nabla^2 f (a^{\ts}_k\nabla f),
	\eeaa
and 
	\beaa
\textbf{T}_3&=& \sum_{i,k=1}^n (a^{\ts}_k\nabla)[(a^{\ts}_i\nabla)(a^{\ts}_i\nabla f)](a_k^{\ts}\nabla f)\\
&=&\quad\sum_{i,k=1}^n a^{\ts}_ka^{\ts}_ia^{\ts}_i \nabla^3 f (a^{\ts}_k\nabla f)+ \sum_{i,k=1}^n a^{\ts}_k\nabla a^{\ts}_i\nabla a^{\ts}_i\nabla f(a_k^{\ts}\nabla f)+\sum_{i,k=1}^n a^{\ts}_k a^{\ts}_i \nabla^2a^{\ts}_i\nabla f(a_k^{\ts}\nabla f)\\ 
&&+\sum_{i,k=1}^n a^{\ts}_ka^{\ts}_i\nabla a^{\ts}_i \nabla^2 f(a_k^{\ts}\nabla f)+2\sum_{i,k=1}^n a^{\ts}_k\nabla a^{\ts}_i a^{\ts}_i\nabla^2 f(a_k^{\ts}\nabla f).
	\eeaa
Combing the above three terms, we have the following equality:
\beaa
&&\textbf{T}_1+\textbf{T}_2-\textbf{T}_3\\
&=&
\quad\sum_{i,k=1}^n a^{\ts}_i\nabla a^{\ts}_i\nabla  a^{\ts}_k \nabla f (a^{\ts}_k\nabla f)+\sum_{i,k=1}^n a^{\ts}_i a^{\ts}_i\nabla^2a^{\ts}_k \nabla f (a^{\ts}_k\nabla f) \\
	&&-\sum_{i,k=1}^n a^{\ts}_k\nabla a^{\ts}_i\nabla a^{\ts}_i\nabla f(a_k^{\ts}\nabla f)-\sum_{i,k=1}^n a^{\ts}_k a^{\ts}_i \nabla^2a^{\ts}_i\nabla f(a_k^{\ts}\nabla f)\\
	&&+[\mathsf Q\mathsf X+\mathsf D]^{\ts}[\mathsf Q\mathsf X+\mathsf D]+2\sum_{i,k=1}^n  a^{\ts}_i a^{\ts}_i\nabla a^{\ts}_k \nabla^2 f (a^{\ts}_k\nabla f)-2\sum_{i,k=1}^n a^{\ts}_k\nabla a^{\ts}_i a^{\ts}_i\nabla^2 f(a_k^{\ts}\nabla f).
	\eeaa
Furthermore, we investigate the last two terms in the above formula:
\beaa
&&2\sum_{i,k=1}^n  a^{\ts}_i a^{\ts}_i\nabla a^{\ts}_k \nabla^2 f (a^{\ts}_k\nabla f)-2\sum_{i,k=1}^n a^{\ts}_k\nabla a^{\ts}_i a^{\ts}_i\nabla^2 f(a_k^{\ts}\nabla f)\\
&=&2\sum_{\hat i,\hat k=1}^{n+m}\left[\sum_{i,k=1}^n\sum_{i'=1}^{n+m}\left(\la a^{\ts}_{i\hat i}a^{\ts}_{i i'} ( \frac{\partial a^{\ts}_{k\hat k}}{\partial x_{i'}}) ,(a^{\ts}\nabla)_kf\ra - \la a^{\ts}_{k i'}a^{\ts}_{i\hat k}\frac{\partial a^{\ts}_{i \hat i}}{\partial x_{i'}}   ,(a^{\ts}\nabla)_kf\ra\right)\right](\frac{\partial }{\partial x_{\hat i}}\frac{\partial f}{\partial x_{\hat k}})\\
&=&\sum_{
\hat i,\hat k=1}^n2\mathsf C_{\hat i\hat k}\frac{\partial^2 f}{\partial x_{\hat i}\partial x_{\hat k}}=2\mathsf C^{\ts}\mathsf X,
\eeaa
which completes the proof.
\qed 
\end{proof}

\subsubsection{Derivation of $\widetilde \Gamma_{2,z}(f,f)$ calculus}
As in the proof of $\widetilde \Gamma_{2}(f,f)$, we give the proofs for the key lemmas below. 
\begin{lemma}\label{lemma z step 0}
\beaa  \widetilde  \Gamma_2^z(f,f) &=& \frac{1}{2}\Delta_a\Gamma_{1}^z(f,f)-\Gamma_{1}^z(\Delta_a f,f)\\
&&+\sum_{i=1}^n\sum_{\hat k=1}^{n+m}\Big[ (aa^{\ts} \nabla\log \pi)_{\hat k} \nabla_{\hat k} z^{\ts}_i\nabla f-z_i^{\ts}\nabla (aa^{\ts}\nabla\log \pi)_{\hat k} \nabla_{\hat k}f\Big]z^{\ts}_i\nabla f.
\eeaa
\end{lemma}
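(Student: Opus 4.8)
The plan is to mirror the computation already carried out in Lemma \ref{lemma step 0}, simply replacing the outer copy of $a^{\ts}\nabla$ in $\Gamma_1$ by $z^{\ts}\nabla$ while keeping the generator $\widetilde L$ (hence the $aa^{\ts}$) unchanged. Recall $\widetilde \Gamma_2^z(f,f) = \frac12 \widetilde L \Gamma_1^z(f,f) - \Gamma_1^z(\widetilde L f, f)$ by Definition \ref{defn:tilde gamma 2 znew}(ii)--restricted to the first two terms--and that $\widetilde L f = \Delta_a f + \la aa^{\ts}\nabla\log\pi, \nabla f\ra$. Substituting this decomposition of $\widetilde L$ in both places and using bilinearity of $\Gamma_1^z$ gives
\begin{equation*}
\widetilde\Gamma_2^z(f,f) = \Big[\tfrac12 \Delta_a \Gamma_1^z(f,f) - \Gamma_1^z(\Delta_a f, f)\Big] + \tfrac12 \la aa^{\ts}\nabla\log\pi, \nabla\Gamma_1^z(f,f)\ra - \Gamma_1^z\big(\la aa^{\ts}\nabla\log\pi, \nabla f\ra, f\big).
\end{equation*}
So it remains to show that the last two terms combine into the stated first-order expression in $\nabla f$.

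Write $b = aa^{\ts}\nabla\log\pi$. For the term $\tfrac12 \la b, \nabla \Gamma_1^z(f,f)\ra$, expand $\Gamma_1^z(f,f) = \la z^{\ts}\nabla f, z^{\ts}\nabla f\ra_{\hR^m} = \sum_{i=1}^m (z^{\ts}_i\nabla f)^2$ and differentiate: each $\partial_{x_{\hat k}}$ hits either the coefficients $z^{\ts}_{i\hat i}$ or the second derivative $\partial^2 f$, producing $\sum_{i,\hat k} b_{\hat k}\,(\nabla_{\hat k} z^{\ts}_i\nabla f)\,(z^{\ts}_i\nabla f)$ plus a term $\sum_{i,\hat k} b_{\hat k}\, z^{\ts}_i\nabla^2 f\,(z^{\ts}_i\nabla f)$ carrying a Hessian of $f$. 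For the term $-\Gamma_1^z(\la b,\nabla f\ra, f) = -\la z^{\ts}\nabla(\la b,\nabla f\ra), z^{\ts}\nabla f\ra_{\hR^m}$, apply the product rule inside $z^{\ts}\nabla$: this yields $-\sum_{i,\hat k} (z^{\ts}_i\nabla b_{\hat k})(\nabla_{\hat k} f)(z^{\ts}_i\nabla f)$ together with a term $-\sum_{i,\hat k} z^{\ts}_{i\hat k} b_{\hat k} (z^{\ts}_i\nabla^2 f)(z^{\ts}_i\nabla f)$--wait, more precisely $-\sum_i z^{\ts}_i\nabla(b\cdot\nabla f)_{\text{Hess part}}(z^{\ts}_i\nabla f)$ which is $-\sum_{i} (z^{\ts}_i\nabla f)\sum_{\hat k} b_{\hat k} z^{\ts}_i\nabla\nabla_{\hat k} f$; the two Hessian-of-$f$ contributions cancel exactly because both equal $\sum_{i,\hat k} b_{\hat k}\,(z^{\ts}_i\nabla)(\nabla_{\hat k}f)\,(z^{\ts}_i\nabla f)$. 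What survives is precisely $\sum_{i=1}^m\sum_{\hat k=1}^{n+m}\big[(aa^{\ts}\nabla\log\pi)_{\hat k}\,\nabla_{\hat k} z^{\ts}_i\nabla f - z^{\ts}_i\nabla(aa^{\ts}\nabla\log\pi)_{\hat k}\,\nabla_{\hat k} f\big]\,z^{\ts}_i\nabla f$, which is the claimed identity after renaming the index $i$.

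The only subtlety--and the step I would double-check most carefully--is the cancellation of the two terms involving the Hessian $\nabla^2 f$: one must verify that $\tfrac12\,b_{\hat k}\partial_{x_{\hat k}}\big[(z^{\ts}_i\nabla f)^2\big]$ contributes exactly $b_{\hat k}(z^{\ts}_i\nabla\nabla_{\hat k}f)(z^{\ts}_i\nabla f)$ after summing the symmetric pair, and that $-\Gamma_1^z(\la b,\nabla f\ra,f)$ contributes the same quantity with the opposite sign, so that no third-order tensor of $f$ and no stray second-order tensor remains. This is the analogue of the corresponding cancellation in Lemma \ref{lemma step 0} and poses no real difficulty, but because the outer vector field ($z$) and the generator-induced vector field ($b$, built from $a$) differ here, the bookkeeping of which factor the derivative lands on must be tracked index by index rather than quoted from the symmetric case. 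Everything else is a routine application of the product rule, and the manipulations are entirely parallel to those in the proof of Lemma \ref{lemma step 0}.
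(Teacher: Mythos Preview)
Your proposal is correct and follows essentially the same approach as the paper: decompose $\widetilde L=\Delta_a+\la aa^{\ts}\nabla\log\pi,\nabla\cdot\ra$, expand $\tfrac12\la b,\nabla\Gamma_1^z(f,f)\ra$ and $-\Gamma_1^z(\la b,\nabla f\ra,f)$ in coordinates, and observe that the $b_{\hat k}\,z^{\ts}_i\nabla\nabla_{\hat k}f\,(z^{\ts}_i\nabla f)$ contributions from the two pieces cancel, leaving only the first-order commutator-type expression. Your remark that the outer sum should run over $i=1,\dots,m$ (rather than the $n$ printed in the lemma statement) is also correct and matches what the paper actually writes in its proof.
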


\begin{proof}[Proof of Lemma \ref{lemma z step 0}] From our definition, one can get  
	\beaa  \widetilde  \Gamma_{2,z}(f,f) &=& \frac{1}{2}\Delta_a\Gamma_{1}^z-\Gamma_{1}^z(\Delta_a f,f)+\frac{1}{2}\la aa^{\ts}\nabla\log \pi, \nabla \Gamma_{1}^z(f,f)\ra-\Gamma_{1}^z(\la aa^{\ts}\nabla\log\pi, \nabla f\ra,f),
	\eeaa
where we take $b=aa^{\ts}\nabla\log \pi$. We obtain
\beaa
\frac{1}{2}b\nabla \Gamma_{1}^z(f,f)&=&\frac{1}{2}\sum_{\hat k=1}^{n+m}b_{\hat k}\frac{\pa}{\pa x_{\hat k}}(\la z^T\nabla f,z^T\nabla f\ra)\\
&=&\sum_{\hat k,\hat i=1}^{n+m}\sum_{i=1}^m(b_{\hat k}\frac{\pa z^T_{i\hat i}}{\pa x_{\hat k}}\frac{\pa f}{\pa x_{\hat i}}+b_{\hat k}z^T_{i\hat i}\frac{\pa^2f}{\pa x_{\hat k}\pa x_{\hat i}})(z^T\nabla f)_i,
\eeaa
and
\beaa
-\Gamma_{1}^z(b\nabla f,f)&=&-\la z^T\nabla(b\nabla f),z^T\nabla f\ra \\
&=&-\sum_{i=1}^m\sum_{\hat k,\hat i=1}^{n+m}(z^{\ts}_{i\hat i}\frac{\pa b_{\hat k}}{\pa x_{\hat i}}\frac{\pa f}{\pa x_{\hat k}} +z^{\ts}_{i\hat i}b_{\hat k}\frac{\pa^2f}{\pa x_{\hat i}\pa x_{\hat k}}) (a^T\nabla f)_i.
\eeaa
Summing over all the terms, we have 
	\beaa
\widetilde	\Gamma_2(f,f)
&=& \frac{1}{2}\Delta_a\Gamma_{1}^z-\Gamma_{1}^z(\Delta_a f,f)\\
&&+\sum_{i=1}^m\sum_{\hat k=1}^{n+m}\Big[ (aa^{\ts} \nabla\log \pi)_{\hat k} \nabla_{\hat k} z^{\ts}_i\nabla f-z_i^{\ts}\nabla (aa^{\ts}\nabla\log \pi)_{\hat k} \nabla_{\hat k}f\Big]z^{\ts}_i\nabla f.
	\eeaa
	\qed
\end{proof}
{
\begin{lemma}\label{lemma z step 1}
	\bea\label{gamma 2 relation}
	&&\frac{1}{2}\Delta_a\Gamma_{1}^z(f,f)-\Gamma_{1}^z(\Delta_a f,f)\\
	&=&\frac{1}{2}\la a^{\ts}\nabla, (a^{\ts}\nabla |z^{\ts}\nabla f|^2 \ra -\la z^{\ts} \nabla (\la a^{\ts}\nabla, a^{\ts}\nabla f\ra ),z^{\ts}\nabla f\ra\nonumber \\
	&&+\la \nabla a, \sum_{k=1}^m\Big[ a^{\ts}\nabla z^{\ts}_k\nabla f-z^{\ts}_k \nabla a^{\ts}\nabla f)\Big] z^{\ts}_k\nabla f\ra -\la  z^{\ts} \nabla^2 a\circ  (a^{\ts}\nabla f),z^{\ts}\nabla f\ra.\nonumber
	\eea
	\end{lemma}}

\begin{proof}[Proof of Lemma \ref{lemma z step 1} ] 
According to our definition, we have 
 \beaa\label{gamma 2 --1}
 \Delta_a\Gamma_{1}^z (f,f)&=&\nabla\cdot (aa^{\ts}|z^{\ts}\nabla f|^2)\\
 &=& \la \nabla a,a^{\ts}\nabla |z^{\ts}\nabla f|^2\ra +\la a^{\ts}\nabla,  a^{\ts}\nabla |z^{\ts}\nabla f|^2 \ra .
 \eeaa
Recall that, we have 
\beaa
\nabla\cdot (aa^{\ts} \nabla f)&=&\la \nabla a,  a^{\ts}\nabla f\ra +\la a^{\ts}\nabla,  a^{\ts}\nabla f\ra .
\eeaa
According to the definition of $\Gamma_{1}^z$, we have  
\beaa
\Gamma_{1}^z(\Delta_af,f)&=&\la z^{\ts} \nabla (\nabla\cdot (aa^{\ts}\nabla f) ),z^{\ts}\nabla f\ra, \\
&=& \la z^{\ts} \nabla \left( \la \nabla a, (a^{\ts}\nabla f)\ra \right),z^{\ts}\nabla f\ra +\la z^{\ts} \nabla (\la a^{\ts}\nabla ,a^{\ts}\nabla f\ra ),z^{\ts}\nabla f\ra\nonumber\\
&=& \la  z^{\ts} \nabla^2 a\circ  (a^{\ts}\nabla f),z^{\ts}\nabla f\ra+ \la \nabla a\circ (z^{\ts} \nabla (a^{\ts}\nabla f)),z^{\ts}\nabla f\ra\nonumber\\
&&+\la z^{\ts} \nabla (\la a^{\ts}\nabla,a^{\ts}\nabla f\ra ),z^{\ts}\nabla f\ra.
\eeaa
Combining the above terms, we get  
\bea\label{I-II z 1}
&&\frac{1}{2}\Delta_a \Gamma_{1}^z (f,f)-\Gamma_{1}^z (\Delta_a f,f)\nonumber \\
&=&\frac{1}{2}\la a^{\ts}\nabla,a^{\ts}\nabla |z^{\ts}\nabla f|^2 \ra -\la z^{\ts} \nabla (\la a^{\ts}\nabla, a^{\ts}\nabla f\ra ),z^{\ts}\nabla f\ra+\underbrace{\frac{1}{2} \la \nabla a , a^{\ts}\nabla |z^{\ts}\nabla f|^2\ra }_{ \textbf{I}_z}\nonumber \\
&&-\la  z^{\ts} \nabla^2 a\circ  (a^{\ts}\nabla f),z^{\ts}\nabla f\ra-\la \nabla a\circ (z^{\ts} \nabla (a^{\ts}\nabla f)),z^{\ts}\nabla f\ra \cdots \textbf{II}_z.
\eea
Recalling the fact $a^{\ts}_{i\hat i}=a_{\hat i i}$, we have 
\beaa
\textbf{I}_z&=&\frac{1}{2} \la \nabla a , a^{\ts}\nabla |z^{\ts}\nabla f|^2 \ra \\
&=& \la \nabla a, \sum_{k=1}^m a^{\ts}\nabla(z^{\ts}_k\nabla f)z^{\ts}_k\nabla f\ra \\
&=&  \la \nabla a, \sum_{k=1}^m a^{\ts}\nabla z^{\ts}_k\nabla fz^{\ts}_k\nabla f\ra +  \la \nabla a, \sum_{k=1}^m a^{\ts}z^{\ts}_k\nabla^2 fz^{\ts}_k\nabla f\ra ,
\eeaa
and 
\beaa
\textbf{II}_z&=&-\la  \left(z^{\ts} \nabla^2 a\circ  (a^{\ts}\nabla f)\right),z^{\ts}\nabla f\ra- \sum_{k=1}^m\la \nabla a,z^{\ts}_k \nabla (a^{\ts}\nabla f)\ra z^{\ts}_k\nabla f \\
&=& -\la  \la z^{\ts} \nabla^2 a\circ  (a^{\ts}\nabla f)\ra ,z^{\ts}\nabla f\ra- \sum_{k=1}^m\la \nabla a, (z^{\ts}_k  a^{\ts}\nabla^2 f)\ra z^{\ts}_k\nabla f\\
&&-\sum_{k=1}^m\la \nabla a, (z^{\ts}_k \nabla a^{\ts}\nabla f)z^{\ts}_k\nabla f\ra.
\eeaa
Subtracting the above two terms, we \qi{obtain}
\bea\label{I-II z 2}
\textbf{I}_z-\textbf{
II}_z&=& \la \nabla a,\sum_{k=1}^m\Big[ a^{\ts}\nabla z^{\ts}_k\nabla f-z^{\ts}_k \nabla a^{\ts}\nabla f)\Big] z^{\ts}_k\nabla f\ra -\la  z^{\ts} \nabla^2 a\circ  (a^{\ts}\nabla f),z^{\ts}\nabla f\ra.\nonumber\\
\eea
Plugging \eqref{I-II z 2} into \eqref{I-II z 1}, we get
\beaa
&&\frac{1}{2}\Delta_a \Gamma_{1}^z(f,f)-\Gamma_{1}^z(\Delta_a f,f)\\
&=&\frac{1}{2}\la a^{\ts}\nabla, a^{\ts}\nabla |z^{\ts}\nabla f|^2 \ra -\la z^{\ts} \nabla ( \la a^{\ts}\nabla ,a^{\ts}\nabla f\ra ),z^{\ts}\nabla f\ra \\
&&+\la \nabla a,\sum_{k=1}^m\Big[ a^{\ts}\nabla z^{\ts}_k\nabla f-z^{\ts}_k \nabla a^{\ts}\nabla f)\Big] a^{\ts}_k\nabla f\ra -\la  \left(z^{\ts} \nabla^2 a\circ  (a^{\ts}\nabla f)\right),z^{\ts}\nabla f\ra.
\eeaa
The proof is completed. \qed 
 \end{proof}

We further investigate the extra term explicitly in Lemma \ref{lemma z step 1}. 
\begin{lemma}\label{lemma z step 2}
	\beaa
	&& \frac{1}{2}\la a^{\ts}\nabla, a^{\ts}\nabla |z^{\ts}\nabla f|^2 \ra -\la z^{\ts} \nabla (\la a^{\ts}\nabla,a^{\ts}\nabla f\ra ),z^{\ts}\nabla f\ra \\
	&=&[\mathsf P\mathsf X+\mathsf E]^{\ts}[\mathsf P\mathsf X+\mathsf E]+2\mathsf F^{\ts}\mathsf X\\
	&&+ \sum_{i=1}^n\sum_{k=1}^m a^{\ts}_i\nabla a^{\ts}_i\nabla  z^{\ts}_k \nabla f (z^{\ts}_k\nabla f)+\sum_{i=1}^n\sum_{k=1}^m a^{\ts}_i a^{\ts}_i\nabla^2z^{\ts}_k \nabla f (z^{\ts}_k\nabla f)  \\
	&& -\sum_{i=1}^n\sum_{k=1}^m z^{\ts}_k\nabla a^{\ts}_i\nabla a^{\ts}_i\nabla f(z_k^{\ts}\nabla f)-\sum_{i=1}^n\sum_{k=1}^m z^{\ts}_k a^{\ts}_i \nabla^2a^{\ts}_i\nabla f(z_k^{\ts}\nabla f),
\eeaa 
where vectors $\mathsf F$, $\mathsf E$, and the matrix $\mathsf P$ are defined in Definition \ref{definition: F G V} and formula \eqref{matrix Q P}.
\end{lemma}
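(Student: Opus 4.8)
The strategy mirrors the proof of Lemma \ref{lemma step 2}, with the outer pair of $a^{\ts}$-derivatives replaced in one slot by $z^{\ts}$. First I would expand the left-hand side by writing $|z^{\ts}\nabla f|^2=\sum_{k=1}^m(z^{\ts}_k\nabla f)^2$ and distributing the two first-order operators $a^{\ts}_i\nabla$ (for $i=1,\dots,n$) across the product. The Leibniz rule gives, for the first term,
\beaa
\frac{1}{2}(a^{\ts}\nabla)\circ(a^{\ts}\nabla|z^{\ts}\nabla f|^2)
=\sum_{i=1}^n\sum_{k=1}^m(a^{\ts}_i\nabla)\big[(a^{\ts}_i\nabla)(z^{\ts}_k\nabla f)\,(z^{\ts}_k\nabla f)\big],
\eeaa
which upon one more differentiation splits into $\sum_{i,k}|(a^{\ts}_i\nabla)(z^{\ts}_k\nabla f)|^2+\sum_{i,k}(a^{\ts}_i\nabla)(a^{\ts}_i\nabla)(z^{\ts}_k\nabla f)\,(z^{\ts}_k\nabla f)$. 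Call these $\mathbf{T}_1^z$ and $\mathbf{T}_2^z$; the second summand of the LHS is then $\mathbf{T}_3^z:=\sum_{i,k}(z^{\ts}_k\nabla)[(a^{\ts}_i\nabla)(a^{\ts}_i\nabla f)]\,(z^{\ts}_k\nabla f)$, so the LHS equals $\mathbf{T}_1^z+\mathbf{T}_2^z-\mathbf{T}_3^z$.

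**Key steps.** The next step is to expand each $\mathbf{T}_j^z$ in terms of the Hessian vectorization $\mathsf X$, the first-order data $a^{\ts}_i\nabla f$, $z^{\ts}_k\nabla f$, and the coefficient derivatives $\nabla a^{\ts}$, $\nabla z^{\ts}$, $\nabla^2 a^{\ts}$, $\nabla^2 z^{\ts}$. For $\mathbf{T}_1^z$ one writes $(a^{\ts}_i\nabla)(z^{\ts}_k\nabla f)=a^{\ts}_i z^{\ts}_k\nabla^2 f+a^{\ts}_i\nabla z^{\ts}_k\nabla f=(\mathsf P\mathsf X)_{ik}+\mathsf E_{ik}$, so $\mathbf{T}_1^z=[\mathsf P\mathsf X+\mathsf E]^{\ts}[\mathsf P\mathsf X+\mathsf E]$ exactly as claimed, using the definitions in \eqref{matrix Q P} and \eqref{vector D E}. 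For $\mathbf{T}_2^z$ and $\mathbf{T}_3^z$ one carries out the full Leibniz expansion: each produces a third-derivative term $\sum_{i,k}a^{\ts}_i a^{\ts}_i z^{\ts}_k\nabla^3 f\,(z^{\ts}_k\nabla f)$ (identical in $\mathbf{T}_2^z$ and $\mathbf{T}_3^z$, so it cancels in the difference, exactly the point of the construction), a pair of "genuine curvature" terms involving $\nabla a^{\ts}_i\nabla z^{\ts}_k$, $\nabla^2 z^{\ts}_k$ (from $\mathbf{T}_2^z$) resp. $\nabla a^{\ts}_i\nabla a^{\ts}_i$, $\nabla^2 a^{\ts}_i$ (from $\mathbf{T}_3^z$), and several mixed second-derivative-of-$f$ terms. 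The four surviving curvature-type terms match the last two lines of the statement. The remaining Hessian-linear terms $2\sum_{i,k}a^{\ts}_i a^{\ts}_i\nabla z^{\ts}_k\nabla^2 f\,(z^{\ts}_k\nabla f)-2\sum_{i,k}z^{\ts}_k\nabla a^{\ts}_i\,a^{\ts}_i\nabla^2 f\,(z^{\ts}_k\nabla f)$ must then be reindexed over $\hat i,\hat k\in\{1,\dots,n+m\}$ and recognized as $2\mathsf F^{\ts}\mathsf X$, using the definition of $\mathsf F$ in Definition \ref{definition: F G V}; this bookkeeping step is where Assumption \ref{prop:main condition} is implicitly respected, since $z^{\ts}_k\nabla a^{\ts}_i$ lands in $\mathrm{Span}\{a^{\ts}_1,\dots,a^{\ts}_n\}$.

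**Main obstacle.** The conceptual content is light — there is no cancellation of third-order terms that is not already guaranteed by symmetry of mixed partials — so the difficulty is purely combinatorial: keeping the two free indices $\hat i,\hat k$ straight while the summation indices $i,k,i',k'$ proliferate, and matching signs across $\mathbf{T}_2^z-\mathbf{T}_3^z$ so that the asymmetric terms collect correctly into $\mathsf F$ rather than leaking into the curvature tensor $\mathfrak R_z$. I would handle this by expanding $\mathbf{T}_2^z$ and $\mathbf{T}_3^z$ term-by-term in full index notation, grouping by the order of derivative hitting $f$ (third, second, first), cancelling the third-order term, identifying the first-order terms with the curvature expression $\mathfrak R_z(\nabla f,\nabla f)$ of \eqref{tenser z}, and reindexing the leftover second-order-in-$f$ terms into $2\mathsf F^{\ts}\mathsf X$. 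Since the parallel computation for the $a$-only case is already done in Lemma \ref{lemma step 2} and the reference \cite{FL}[Lemma 3.17], I would present the $z$-case by pointing to that structure and exhibiting only the terms where the substitution $a^{\ts}_k\mapsto z^{\ts}_k$ in the outer slot changes the bookkeeping, citing \cite{FL} for the routine parts. \qed
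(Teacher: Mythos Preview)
Your plan is correct and follows exactly the approach the paper intends: the paper itself does not give a standalone proof of Lemma \ref{lemma z step 2} but simply says it ``follows similar proofs in \cite{FL}'' after presenting the parallel computation for Lemma \ref{lemma step 2}, so your $\mathbf{T}_1^z+\mathbf{T}_2^z-\mathbf{T}_3^z$ decomposition with the substitution $a^{\ts}_k\mapsto z^{\ts}_k$ in the outer slot is precisely what is meant.

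One clarification: your remark that the reindexing into $2\mathsf F^{\ts}\mathsf X$ ``is where Assumption \ref{prop:main condition} is implicitly respected'' is not right. Lemma \ref{lemma z step 2} is a purely algebraic identity requiring no structural hypothesis; the definition of $\mathsf F_{\hat i\hat k}$ in Definition \ref{definition: F G V} already matches the leftover Hessian-linear terms directly, with no need to place $z^{\ts}_k\nabla a^{\ts}_i$ in $\mathrm{Span}\{a^{\ts}_1,\dots,a^{\ts}_n\}$. Assumption \ref{prop:main condition} enters only later, in Step B of the proof of Theorem \ref{thm: information bochner}, where it is used to kill the $z^{\ts}_kz^{\ts}_l\nabla^2 f$ piece arising from the first term of $\mathsf G$ (not $\mathsf F$). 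You should drop that sentence from your write-up.
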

\begin{proof}[Proof of Lemma \ref{lemma z step 2}]  
	\beaa
	&&\frac{1}{2}\la a^{\ts}\nabla,  a^{\ts}\nabla |z^{\ts}\nabla f|^2 \ra -\la z^{\ts} \nabla (\la a^{\ts}\nabla,a^{\ts}\nabla f)\ra ),z^{\ts}\nabla f\ra\\
	&=&\frac{1}{2}\sum_{i=1}^n\sum_{k=1}^m (a^{\ts}_i\nabla)(a_i^{\ts}\nabla)|z^{\ts}_k\nabla f|^2-\sum_{i=1}^n\sum_{k=1}^m (z^{\ts}_k\nabla)[(a^{\ts}_i\nabla)(a^{\ts}_i\nabla f)](z_k^{\ts}\nabla f)\\
	&=&\sum_{i=1}^n\sum_{k=1}^m a^{\ts}_i\nabla\big[\Big( (a^{\ts}_i\nabla) (z^{\ts}_k\nabla f)\Big)(z^{\ts}_k\nabla f)]-\sum_{i=1}^n\sum_{k=1}^m (z^{\ts}_k\nabla)[(a^{\ts}_i\nabla)(a^{\ts}_i\nabla f)](z_k^{\ts}\nabla f)\\
	&=&\sum_{i=1}^n \sum_{k=1}^m |(a^{\ts}_i\nabla) (z^{\ts}_k\nabla f)|^2+\sum_{i=1}^n\sum_{k=1}^m \la (a^{\ts}_i\nabla) (a^{\ts}_i\nabla ) (z^{\ts}_k \nabla f) (z^{\ts}_k\nabla f)\\
	&& -\sum_{i=1}^n\sum_{k=1}^m (z^{\ts}_k\nabla)[(a^{\ts}_i\nabla)(a^{\ts}_i\nabla f)](z_k^{\ts}\nabla f)\\
	&=& \textbf{T}_1^z+\textbf{T}_2^z-\textbf{T}_3^z.
	\eeaa
We then expand $\textbf{T}_1^z$, $\textbf{T}_2^z$ and $\textbf{T}_3^z$.
	\beaa
	\textbf{T}_1^z&=&\sum_{i=1}^n\sum_{k=1}^m |(a^{\ts}_i\nabla) (z^{\ts}_k\nabla f)|^2
	=\sum_{i=1}^n \sum_{k=1}^m |a^{\ts}_iz^{\ts}_k\nabla^2f+a^{\ts}_i\nabla z^{\ts}_k\nabla f |^2 \\
	&=& \quad\sum_{i=1}^n\sum_{k=1}^m \Big[|a^{\ts}_iz^{\ts}_k\nabla^2f|^2+2(a^{\ts}_iz^{\ts}_k\nabla^2f)(a^{\ts}_i\nabla z^{\ts}_k\nabla f )+|a^{\ts}_i\nabla z^{\ts}_k\nabla f |^2 \Big] \\
	&=& [\mathsf P\mathsf X+\mathsf E]^{\ts}[\mathsf P\mathsf X+\mathsf E],\\ 
	\textbf{T}_2^z&=&\sum_{i=1}^n\sum_{k=1}^m \la (a^{\ts}_i\nabla) (a^{\ts}_i\nabla ) (z^{\ts}_k \nabla f) (z^{\ts}_k\nabla f)\\
	&=&\quad\sum_{i=1}^n\sum_{k=1}^m a^{\ts}_ia^{\ts}_iz^{\ts}_k \nabla^3 f (z^{\ts}_k\nabla f)+ \sum_{i=1}^n\sum_{k=1}^m a^{\ts}_i\nabla a^{\ts}_i\nabla  z^{\ts}_k \nabla f (z^{\ts}_k\nabla f)\\
	&&+\sum_{i=1}^n\sum_{k=1}^m a^{\ts}_i a^{\ts}_i\nabla^2z^{\ts}_k \nabla f (z^{\ts}_k\nabla f)  
+2\sum_{i=1}^n\sum_{k=1}^m  a^{\ts}_i a^{\ts}_i\nabla z^{\ts}_k \nabla^2 f (z^{\ts}_k\nabla f)
+\sum_{i=1}^n\sum_{k=1}^m  a^{\ts}_i\nabla a^{\ts}_i z^{\ts}_k \nabla^2 f (z^{\ts}_k\nabla f),
	\eeaa
and 
	\beaa
\textbf{T}_3^z&=& \sum_{i=1}^n\sum_{k=1}^m (z^{\ts}_k\nabla)[(a^{\ts}_i\nabla)(a^{\ts}_i\nabla f)](z_k^{\ts}\nabla f)\\
&=&\quad\sum_{i=1}^n\sum_{k=1}^m z^{\ts}_ka^{\ts}_ia^{\ts}_i \nabla^3 f (z^{\ts}_k\nabla f)+ \sum_{i=1}^n \sum_{k=1}^m z^{\ts}_k\nabla a^{\ts}_i\nabla a^{\ts}_i\nabla f(z_k^{\ts}\nabla f)\\
\eeaa 
\beaa 
&&+\sum_{i=1}^n\sum_{k=1}^m z^{\ts}_k a^{\ts}_i \nabla^2a^{\ts}_i\nabla f(z_k^{\ts}\nabla f)
+\sum_{i=1}^n\sum_{k=1}^m z^{\ts}_ka^{\ts}_i\nabla a^{\ts}_i \nabla^2 f(z_k^{\ts}\nabla f)\\&&+2\sum_{i=1}^n\sum_{k=1}^m z^{\ts}_k\nabla a^{\ts}_i a^{\ts}_i\nabla^2 f(z_k^{\ts}\nabla f).
	\eeaa
Combing the above three terms, we have the following equality:
\beaa
&&\textbf{T}_1^z+\textbf{T}_2^z-\textbf{T}_3^z\\
&=&[\mathsf P\mathsf X+\mathsf E]^{\ts}[\mathsf P\mathsf X+\mathsf E]\\
&&+\sum_{i=1}^n\sum_{k=1}^m a^{\ts}_i\nabla a^{\ts}_i\nabla  z^{\ts}_k \nabla f (z^{\ts}_k\nabla f)+\sum_{i=1}^n\sum_{k=1}^m a^{\ts}_i a^{\ts}_i\nabla^2z^{\ts}_k \nabla f (z^{\ts}_k\nabla f) \\
	&&-\sum_{i=1}^n\sum_{k=1}^m z^{\ts}_k\nabla a^{\ts}_i\nabla a^{\ts}_i\nabla f(z_k^{\ts}\nabla f)-\sum_{i=1}^n\sum_{k=1}^m z^{\ts}_k a^{\ts}_i \nabla^2a^{\ts}_i\nabla f(z_k^{\ts}\nabla f)\\
	&&+2\sum_{i=1}^n\sum_{k=1}^m  a^{\ts}_i a^{\ts}_i\nabla z^{\ts}_k \nabla^2 f (z^{\ts}_k\nabla f)-2\sum_{i=1}^n\sum_{k=1}^m z^{\ts}_k\nabla a^{\ts}_i a^{\ts}_i\nabla^2 f(z_k^{\ts}\nabla f).
	\eeaa
Furthermore, we investigate the last two terms in the above formula:
\beaa
&&2\sum_{i=1}^n\sum_{k=1}^m  a^{\ts}_i a^{\ts}_i\nabla z^{\ts}_k \nabla^2 f (z^{\ts}_k\nabla f)-2\sum_{i=1}^n\sum_{k=1}^m z^{\ts}_k\nabla a^{\ts}_i a^{\ts}_i\nabla^2 f(z_k^{\ts}\nabla f)\\
&=&2\sum_{\hat i,\hat k=1}^{n+m}\left[\sum_{i=1}^n\sum_{k=1}^m\sum_{i'=1}^{n+m}\left( a^{\ts}_{i\hat i}a^{\ts}_{i i'} ( \frac{\partial z^{\ts}_{k\hat k}}{\partial x_{i'}})(z^{\ts}\nabla)_kf -  z^{\ts}_{k i'}a^{\ts}_{i\hat k}\frac{\partial a^{\ts}_{i \hat i}}{\partial x_{i'}} (z^{\ts}\nabla)_kf\right)\right](\frac{\partial }{\partial x_{\hat i}}\frac{\partial f}{\partial x_{\hat k}})\\
&=&\sum_{
\hat i,\hat k=1}^n2\mathsf F_{\hat i\hat k}\frac{\partial^2 f}{\partial x_{\hat i}\partial x_{\hat k}}=2\mathsf F^{\ts}\mathsf X,
\eeaa
which completes the proof.
\qed 
\end{proof}

\begin{remark}
The proof of the identity 
	\beaa
	\div^{\pi}_z(\Gamma_{\nabla(aa^{\ts})}f,f )-\div^{\pi}_a(\Gamma_{\nabla(zz^{\ts})}f,f )&=&\mathfrak{R}_{\pi}(\nabla f,\nabla f)+2\mathsf G^{\ts}\mathsf X,
	\eeaa
is the same as \qi{the one} in \cite{FL}[Lemma 10]. We skip the proof here. 
\end{remark}

\subsection{Proof of Theorem \ref{thm: information bochner}}
Combining all the above propositions, we are now ready to prove an {\em information Bochner's formula}. 

\begin{proof}[Proof of Theorem \ref{thm: information bochner}]
We show the proof in the following three steps. 

\noindent \textbf{Step A:} According to the above three propositions, we have 
	\beaa
&&\int \Big[ \widetilde \Gamma_{2}(f,f)+\widetilde \Gamma_2^{z,\pi}(f,f)+\Gamma_{\mathcal I_{a,z}}(f,f)\Big] pdx\\
&=& \int \Big[ \widetilde \Gamma_{2}(f,f)+\widetilde \Gamma_2^{z,\pi}(f,f)+(\beta+(1-\beta))\Gamma_{\mathcal I_{a}}(f,f)+\Gamma_{\mathcal I_{z}}(f,f)\Big] pdx\\
&=&\int \Big[(\mathsf Q\mathsf X+\mathsf D)^{\ts}(\mathsf Q\mathsf X+\mathsf D)+ 2\mathsf C^{\ts}\mathsf X+ \mathfrak R_{a}(\nabla f,\nabla f)\Big]pdx\\
&&+\int \Big[(\mathsf P\mathsf X+\mathsf E)^{\ts}(\mathsf P\mathsf X+\mathsf E)+2\mathsf F^{\ts}\mathsf X+\mathfrak R_{z}(\nabla f,\nabla f)+\mathfrak{R}_{\pi}(\nabla f,\nabla f)+2\mathsf G^{\ts}\mathsf X\Big]pdx \\ 
&&+\int \Big[\beta( \mathfrak R_{\mathcal I_a}(\nabla f,\nabla f)+2\mathsf V_a^{\ts}\mathsf X) +(1-\beta)\mathfrak R_{\gamma_a}(\nabla f,\nabla f)+\mathfrak R_{\gamma_z}(\nabla f,\nabla f)\Big]pdx.
\eeaa 

\noindent\textbf{Step B:} Under Assumption \ref{prop:main condition}, we show that there exists vectors $\Lambda_1, \Lambda_2\in \mathbb{R}^{(n+m)^2\times 1}$ such that, for any $\beta \in\mathbb R^1$, 
	\bea\label{technical condition}
	(\mathsf Q^{\ts}\Lambda_1+\mathsf P^{\ts} \Lambda_2)^{\ts}\mathsf X&=&(\mathsf F+\mathsf C+\mathsf G+\beta \mathsf V^a+\mathsf Q^{\ts}\mathsf D+\mathsf P^{\ts}\mathsf E)^{\ts}\mathsf X.
	\eea
The main idea is to complete squares for all the second-order terms listed below:
\beaa 
&&(\mathsf Q\mathsf X+\mathsf D)^{\ts}(\mathsf Q\mathsf X+\mathsf D)+(\mathsf P\mathsf X+\mathsf E)^{\ts}(\mathsf P\mathsf X+\mathsf E)+ 2\mathsf C^{\ts}\mathsf X+2\mathsf F^{\ts}\mathsf X+2\mathsf G^{\ts}\mathsf X+2\mathsf V_a^{\ts}\mathsf X,
\eeaa
where the leading terms are given by 
\beaa 
\mathsf X^{\ts} \mathsf Q^{\ts}\mathsf Q\mathsf X=\sum_{i,k=1}^n |a^{\ts}_ia^{\ts}_k\nabla^2f|^2,\quad \mathsf X^{\ts} \mathsf P^{\ts}\mathsf P\mathsf X= \sum_{i=1}^n\sum_{j=1}^m |a^{\ts}_iz^{\ts}_j\nabla^2f|^2.
\eeaa 
It is enough to show that the following claim holds. 

\noindent\textbf{Claim 1:}
\bea\label{condition C}
 \mathsf C^{\ts}\mathsf X=\sum_{i,k=1}^n (a^{\ts}_ia^{\ts}_k\nabla^2f)\widetilde{\mathsf C}_1(\mathsf U)_{ik}+\sum_{i=1}^n\sum_{k=1}^m (a^{\ts}_iz^{\ts}_k\nabla^2f)\widetilde{\mathsf C}_2(\mathsf U)_{ik}.
\eea 
We show the details for $\mathsf C^{\ts}\mathsf X$ as below, and skip the details for the similar terms in $ \mathsf F^{\ts}\mathsf X, \mathsf G^{\ts}\mathsf X, \mathsf V^a\mathsf X $. We have 
\beaa 
\mathsf C^{\ts}\mathsf X=\sum_{\hat i,\hat k=1}^{n+m}\mathsf C_{\hat i\hat k}\frac{\pa^2 f}{\pa x_{\hat i}\pa x_{\hat k}} = \sum_{\hat i,\hat k=1}^{n+m} \sum_{i,k=1}^n\Big( a^{\ts}_{i\hat i}a^{\ts}_{i}\nabla a^{\ts}_{k\hat k}  -a^{\ts}_{i\hat k}a^{\ts}_{k}\nabla a^{\ts}_{i\hat i} \Big) a^{\ts}_k \mathsf{U}\frac{\pa^2 f}{\pa x_{\hat i}\pa x_{\hat k}}.
\eeaa 
We look at the first term, $a^{\ts}_i\nabla a^{\ts}_{k\hat k}=\sum_{i'=1}^{n+m}a^{\ts}_{ii'}\nabla_{i'}a^{\ts}_{k\hat k}$, and apply Definition \ref{def: curvature sum}. Hence we derive the following expression by 
\beaa 
&&\sum_{\hat i,\hat k=1}^{n+m} \sum_{i,k=1}^n\Big( a^{\ts}_{i\hat i}a^{\ts}_{i}\nabla a^{\ts}_{k\hat k}  \Big) a^{\ts}_k \mathsf{U}\frac{\pa^2 f}{\pa x_{\hat i}\pa x_{\hat k}}\\
&=& \sum_{\hat i,i',\hat k=1}^{n+m} \sum_{i,k,l=1}^n \Big( a^{\ts}_{i\hat i}a^{\ts}_{ii'} \lambda^{i'k}_l a^{\ts}_{l\hat k}  \Big) a^{\ts}_k \mathsf{U}\frac{\pa^2 f}{\pa x_{\hat i}\pa x_{\hat k}}+\sum_{\hat i,i',\hat k=1}^{n+m} \sum_{i,k=1}^n \sum_{l=1}^{m}\Big( a^{\ts}_{i\hat i}a^{\ts}_{ii'} \lambda^{i'k}_{l+n} z^{\ts}_{l\hat k}  \Big) a^{\ts}_k \mathsf{U}\frac{\pa^2 f}{\pa x_{\hat i}\pa x_{\hat k}} \\
&=& \sum_{i,l=1}^n (a^{\ts}_{i}a^{\ts}_{l} \nabla^2 f)\Big(\sum_{k=1}^n\sum_{i'=1}^{n+m} a^{\ts}_{ii'}\lambda^{i'k}_la^{\ts}_k\mathsf U \Big)+\sum_{i=1}^n\sum_{l=1}^m (a^{\ts}_iz^{\ts}_l\nabla^2 f)\Big(\sum_{k=1}^n\sum_{i'=1}^{n+m} a^{\ts}_{ii'} \lambda ^{i'k}_{l+n}a^{\ts}_k\mathsf U
\Big).
\eeaa 
We have a similar expansion for the second term of $\mathsf C^{\ts}\mathsf X$. Thus   
\beaa 
\widetilde{\mathsf C}_1(\mathsf U)_{il}&=&\sum_{k=1}^n[\sum_{i'=1}^{n+m} a^{\ts}_{ii'}\lambda^{i'k}_l-\sum_{k'=1}^{n+m}a^{\ts}_{kk'} \lambda^{k'i}_l ]a^{\ts}_k\mathsf U,\\
\widetilde{\mathsf C}_2(\mathsf U)_{il}&=&\sum_{k=1}^n[ \sum_{i'=1}^{n+m} a^{\ts}_{ii'} \lambda ^{i'k}_{l+n}-\sum_{k'=1}^{n+m}a^{\ts}_{kk'}\lambda^{k'i}_{l+n}]a^{\ts}_k\mathsf U,
\eeaa 
which proves {Claim 1} in \eqref{condition C}. From \qi{the} observation, we can show the same property for vectors $\mathsf F$, $\mathsf V^a$, and the last three terms of vector $\mathsf G$. We now need to further look at the first term of vector $\mathsf G$ as defined in Definition \ref{def: curvature sum}:
\beaa
&&\text{First Term of}~\mathsf G^{\ts}\mathsf X\\
&=& \sum_{i=1}^n\sum_{k=1}^m\left( z^{\ts}_{k \hat k} z^{\ts}_{k}\nabla a^{\ts}_{i\hat i}a^{\ts}_{i} \mathsf U\right)\frac{\pa^2 f}{\pa x_{\hat i}\pa x_{\hat k}}\\
&=&  \sum_{i,l=1}^n\sum_{k=1}^m\sum_{k'=1}^{n+m} \left( z^{\ts}_{k \hat k} z^{\ts}_{kk'}\lambda^{k'i}_l a^{\ts}_{l\hat i}a^{\ts}_{i} \mathsf U\right)\frac{\pa^2 f}{\pa x_{\hat i}\pa x_{\hat k}}+ \sum_{i=1}^n\sum_{k,l=1}^m\sum_{k'=1}^{n+m} \left( z^{\ts}_{k \hat k} z^{\ts}_{kk'}\lambda^{k'i}_{l+n} z^{\ts}_{l\hat i}a^{\ts}_{i} \mathsf U\right)\frac{\pa^2 f}{\pa x_{\hat i}\pa x_{\hat k}}\\
&=& \sum_{k=1}^m\sum_{l=1}^n(z^{\ts}_k a^{\ts}_l\nabla^2f)\Big(\sum_{i=1}^n\sum_{k'=1}^{n+m}z^{\ts}_{kk'}\lambda^{k'i}_la^{\ts}_i\mathsf U \Big) +\sum_{k,l=1}^m(z^{\ts}_kz^{\ts}_l\nabla^2 f)\Big(\sum_{i=1}^n\sum_{k'=1}^{n+m} z^{\ts}_{kk'}\lambda^{k'i}_{l+n} a^{\ts}_i\mathsf U\Big).
\eeaa 
If Assumption \ref{prop:main condition} holds, we get
$z^{\ts}_{k}\nabla a^{\ts}_{i\hat i}=\sum_{k'=1}^{n+m}\sum_{l=1}^nz^{\ts}_{kk'} \lambda^{k'i}_{l}a^{\ts}_{l\hat i}$, which means $\lambda^{k'i}_{l+n}=0$, for $l=1,\cdots,m$.  Thus we conclude the result by the following claim.

\noindent\textbf{Claim 2:}
\beaa
[\mathsf C +\mathsf F+ \mathsf G^{\ts}+\beta \mathsf V^a]\mathsf X&=&\sum_{i,k=1}^n (a^{\ts}_ia^{\ts}_k\nabla^2f)\widetilde{\mathsf \Lambda}_1(\mathsf U)_{ik}+\sum_{i=1}^n\sum_{k=1}^m (a^{\ts}_iz^{\ts}_k\nabla^2f)\widetilde{\mathsf \Lambda}_2(\mathsf U)_{ik}\\
&=&\widetilde \Lambda_1(\mathsf U)^{\ts}\mathsf Q\mathsf X+\widetilde \Lambda_2(\mathsf U)^{\ts}\mathsf P\mathsf X,
\eeaa 
where
\beaa 
\widetilde{\mathsf \Lambda}_1(\mathsf U)&=&\widetilde{\mathsf C}_1(\mathsf U)+\widetilde{\mathsf F}_1(\mathsf U)+\widetilde{\mathsf G}_1(\mathsf U)+\beta \widetilde{\mathsf V}^a_1(\mathsf U) \in \mathbb R^{n^2\times 1},\\ 
\widetilde{\mathsf \Lambda}_2(\mathsf U)&=& \widetilde{\mathsf C}_2(\mathsf U)+\widetilde{\mathsf F}_2(\mathsf U)+\widetilde{\mathsf G}_2(\mathsf U)+\beta \widetilde{\mathsf V}^a_2(\mathsf U)\in \mathbb R^{({nm})\times 1},
\eeaa 
and 
\beaa 
\widetilde{\mathsf F}_1(\mathsf U)_{il}&=& \sum_{k=1}^m\Big(\sum_{i'=1}^{n+m}  a^{\ts}_{ii'}\omega^{i'k}_l -\sum_{k'=1}^{n+m} z^{\ts}_{kk'}\lambda^{k'i}_l\Big)z^{\ts}_k\mathsf U, \\
\widetilde{\mathsf F}_2(\mathsf U)_{il}&=&\sum_{k=1}^m\Big(\sum_{i'=1}^{n+m}  a^{\ts}_{ii'}\omega^{i'k}_{l+n} -\sum_{k'=1}^{n+m} z^{\ts}_{kk'} \lambda_{l+n}^{k'i}\Big)z^{\ts}_k\mathsf U,\quad 
\widetilde{\mathsf G}_1(U)_{il}=- \sum_{k=1}^m \sum_{i'=1}^{n+m} a^{\ts}_{ii'} \omega^{i'k}_l  z^{\ts}_k\mathsf U,\\ 
\widetilde{\mathsf G}_2(U)_{lk}&=& \sum_{i=1}^n\sum_{k'=1}^{n+m}  z^{\ts}_{kk'} \lambda^{k'i}_l a^{\ts}_{i} \mathsf U+ z^{\ts}_{k}\nabla a^{\ts}_l \mathsf U-\sum_{i=1}^m\sum_{i'=1}^{n+m}a^{\ts}_{li'} \omega^{i'i}_{k+n}  z^{\ts}_i\mathsf U -a^{\ts}_{l}\nabla z^{\ts}_{k} \mathsf U,\\ 
\widetilde{\mathsf V}^a_1(\mathsf U)_{il}&=& -\frac{1}{2} \alpha_l (a^{\ts}_{ i}\mathsf U)+\frac{1}{2}   \la \mathsf U,\gamma \ra\mathbf{1}_{\{i=l\}},\quad 
\widetilde{\mathsf V}^a_2(\mathsf U)_{il}= -\frac{1}{2} \alpha_{l+n}(a^{\ts}_{ i}\mathsf U).
\eeaa 
We thus have 
\beaa 
&&(\mathsf Q\mathsf X+\mathsf D)^{\ts}(\mathsf Q\mathsf X+\mathsf D)+(\mathsf P\mathsf X+\mathsf E)^{\ts}(\mathsf P\mathsf X+\mathsf E)+ 2\mathsf C^{\ts}\mathsf X+2\mathsf F^{\ts}\mathsf X+2\mathsf G^{\ts}\mathsf X+2\mathsf V_a^{\ts}\mathsf X\\
&=& [\mathsf Q\mathsf X+\widetilde \Lambda_1(\mathsf U)+\mathsf D]^{\ts} [\mathsf Q\mathsf X+\widetilde \Lambda_1(\mathsf U)+\mathsf D]+[\mathsf P\mathsf X+\widetilde \Lambda_2(\mathsf U)+\mathsf E]^{\ts}[\mathsf P\mathsf X+\widetilde \Lambda_2(\mathsf U)+\mathsf E]\\
&&- [\widetilde \Lambda_1(\mathsf U)+\mathsf D]^{\ts}[\widetilde \Lambda_1(\mathsf U)+\mathsf D]-[\widetilde \Lambda_2(\mathsf U)+\mathsf E]^{\ts}[\widetilde \Lambda_2(\mathsf U)+\mathsf E]+\mathsf D^{\ts}\mathsf D+\mathsf E^{\ts}\mathsf E\\
&=& [\mathsf Q\mathsf X+\Lambda_1]^{\ts} [\mathsf Q\mathsf X+\Lambda_1]+[\mathsf P\mathsf X+\Lambda_2]^{\ts} [\mathsf P\mathsf X+\Lambda_2]-\Lambda_1^{\ts}\Lambda_1-\Lambda_2^{\ts}\Lambda_2+\mathsf D^{\ts}\mathsf D+\mathsf E^{\ts}\mathsf E,
\eeaa
where 
\beaa 
\Lambda_1= \widetilde \Lambda_1+\mathsf D \in \mathbb R^{n^2\times 1},\quad \Lambda_2= \widetilde \Lambda_2+\mathsf E\in \mathbb R^{({nm})\times 1}.
\eeaa

\noindent\textbf{Step C:} Given the condition \eqref{technical condition} in \textbf{Step B}, we complete the proof by
\beaa
&&\int \Big[ \widetilde \Gamma_{2}(f,f)+\widetilde \Gamma_2^{z,\pi}(f,f)+\Gamma_{\mathcal I_{a,z}}(f,f)\Big] pdx\\
&=&\int \Big[[\mathsf Q\mathsf X+\Lambda_1]^{\ts} [\mathsf Q\mathsf X+\Lambda_1]+[\mathsf P\mathsf X+\Lambda_2]^{\ts} [\mathsf P\mathsf X+\Lambda_2]+\mathfrak{R}(\nabla f,\nabla f)\Big]pdx.
\eeaa
\qed 
\end{proof}

\subsection{Proof of Step 3: entropy dissipation}
We last prove Step $3$. It is a modified Lyaponuv method in probability density space, which is a standard approach; see also \cite{AE, FL1}. We first derive the following dissipation result. 
\begin{proposition}\label{prop4}
Along with Fokker-Planck equation \eqref{FPE}, the following equality holds
\begin{equation*}
\frac{d}{dt}\mathrm{D}_{\mathrm{KL}}(p \|\pi)=-\mathcal{I}_{a}(p \|\pi).    
\end{equation*}
\end{proposition}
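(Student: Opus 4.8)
The plan is to compute $\frac{d}{dt}\mathrm{D}_{\mathrm{KL}}(p\|\pi)$ directly by differentiating under the integral sign and using the reformulated Fokker--Planck equation \eqref{FPE1}. First I would write
\[
\frac{d}{dt}\mathrm{D}_{\mathrm{KL}}(p\|\pi)=\frac{d}{dt}\int p\,\log\frac{p}{\pi}\,dx=\int \partial_t p\,\Big(\log\frac{p}{\pi}+1\Big)\,dx=\int \partial_t p\,\log\frac{p}{\pi}\,dx,
\]
where the ``$+1$'' term drops out because $\int \partial_t p\,dx=\frac{d}{dt}\int p\,dx=0$ (mass conservation). Then I would substitute $\partial_t p=\nabla\cdot\big(p\,aa^{\ts}\nabla\log\frac{p}{\pi}\big)+\nabla\cdot(p\gamma)$ from \eqref{FPE1} and integrate by parts on each term, moving the divergence onto $\log\frac{p}{\pi}$.

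The first term yields, after integration by parts,
\[
\int \nabla\cdot\Big(p\,aa^{\ts}\nabla\log\frac{p}{\pi}\Big)\log\frac{p}{\pi}\,dx=-\int \Big(\nabla\log\frac{p}{\pi},\,aa^{\ts}\nabla\log\frac{p}{\pi}\Big)p\,dx=-\mathcal{I}_a(p\|\pi),
\]
which is exactly the claimed right-hand side. It then remains to show that the irreversible term contributes nothing: $\int \nabla\cdot(p\gamma)\,\log\frac{p}{\pi}\,dx=0$. Integrating by parts, this equals $-\int \big(\nabla\log\frac{p}{\pi},\gamma\big)p\,dx=-\int \big(\nabla\log p-\nabla\log\pi,\gamma\big)p\,dx$. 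Using $\nabla\log p\cdot\gamma\,p=\nabla p\cdot\gamma$ and $\nabla\log\pi\cdot\gamma\,p=\frac{p}{\pi}\nabla\pi\cdot\gamma$, one combines terms and invokes the identity $\nabla\cdot(\pi\gamma)=0$ from the Decomposition Proposition; equivalently, note $\nabla\cdot(p\gamma)=p\,\langle\nabla\log\frac{p}{\pi},\gamma\rangle$ (already established in the proof of Proposition \ref{prop1}), so $\int\nabla\cdot(p\gamma)\log\frac{p}{\pi}\,dx=\int p\,\langle\nabla\log\frac{p}{\pi},\gamma\rangle\log\frac{p}{\pi}\,dx$; alternatively rewrite $\langle\nabla\log\frac{p}{\pi},\gamma\rangle\log\frac{p}{\pi}=\langle\nabla\big(\tfrac12(\log\frac{p}{\pi})^2\big),\gamma\rangle$ and integrate by parts once more to get $-\int \tfrac12(\log\frac{p}{\pi})^2\,\nabla\cdot(p\gamma)\,dx$. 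Since $\nabla\cdot(p\gamma)=p\langle\nabla\log\frac{p}{\pi},\gamma\rangle$ this is not immediately zero pointwise, so the cleanest route is the first one: directly show $\int\nabla\cdot(p\gamma)\log\frac{p}{\pi}\,dx$ vanishes using that $\gamma$ is divergence-free with respect to $\pi$.

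The main obstacle, such as it is, is handling the $\gamma$-term carefully: one must be sure the manipulation genuinely uses $\nabla\cdot(\pi\gamma)=0$ rather than silently assuming $\nabla\cdot(p\gamma)=0$ (which is false in general). The safe argument is: $-\int\langle\nabla\log\frac{p}{\pi},\gamma\rangle p\,dx=-\int\langle\nabla p,\gamma\rangle\,dx+\int\langle\nabla\log\pi,\gamma\rangle p\,dx=\int p\,\nabla\cdot\gamma\,dx+\int p\,\langle\nabla\log\pi,\gamma\rangle\,dx=\int p\,\frac{\nabla\cdot(\pi\gamma)}{\pi}\,dx=0$, using integration by parts on $\int\langle\nabla p,\gamma\rangle\,dx=-\int p\,\nabla\cdot\gamma\,dx$. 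Aside from this, the only technical points are the vanishing of boundary terms at infinity (justified by the decay assumptions on $p$ and $\pi$ stated in the Settings subsection) and the differentiation under the integral sign (justified by the smoothness and integrability hypotheses). Combining the two contributions gives $\frac{d}{dt}\mathrm{D}_{\mathrm{KL}}(p\|\pi)=-\mathcal{I}_a(p\|\pi)$, as claimed.
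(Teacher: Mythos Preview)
Your proposal is correct and follows essentially the same route as the paper: differentiate under the integral, drop the mass-conservation term, substitute the reformulated Fokker--Planck equation \eqref{FPE1}, integrate by parts to obtain $-\mathcal{I}_a(p\|\pi)$ from the reversible part, and show the $\gamma$-term vanishes via $\int p\,\frac{\nabla\cdot(\pi\gamma)}{\pi}\,dx=0$. Your ``safe argument'' for the irreversible term is line-for-line the paper's computation; the only cosmetic difference is that you handle the extra ``$+1$'' via mass conservation whereas the paper writes $p\,\partial_t\log\frac{p}{\pi}=\partial_t p$ and integrates that separately.
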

\begin{proof}
The proof is based on a direct calculation. Notice that 
\begin{equation*}
\begin{split}
\frac{d}{dt}\mathrm{D}_{\mathrm{KL}}(p \|\pi)= & \int \partial_t p  \log\frac{p }{\pi}+p \partial_t\log\frac{p }{\pi} dx\\   
=&\int \nabla\cdot(p \gamma)\log\frac{p }{\pi} +\nabla\cdot(p aa^{\ts}\nabla\log\frac{p }{\pi})dx+\int p  \frac{\partial_t p }{p } dx \\
=& -\int \la\nabla\log\frac{p }{\pi}, \gamma\ra p  dx -\int \la\nabla \log\frac{p }{\pi}, aa^{\ts}\nabla\log\frac{p }{\pi}\ra p  dx. 
\end{split}
\end{equation*}
And
\begin{equation*}
\begin{split}
-\int \la\nabla\log\frac{p }{\pi}, \gamma\ra p  dx=&\int -\la\nabla p , \gamma\ra +\la\nabla\log \pi, \gamma\ra dx\\
=&\int (\nabla\cdot\gamma+\la\nabla\log\pi, \gamma\ra)p  dx\\
=&\int \frac{1}{\pi}\nabla\cdot(\pi \gamma)p  dx\\
=&0. 
\end{split}
\end{equation*}
Combining the above facts, we finish the proof. 
\end{proof}
We next show the following facts to conclude all results.

\begin{proof}[Proof of Theorem \ref{thm1} and Corollary \ref{col3}]
From the information Gamma calculus and condition \eqref{C}, we know that 
\begin{equation*}
    \frac{d}{dt}\mathcal{I}_{a,z}(p \|\pi)\leq -2\kappa \mathcal{I}_{a,z}(p \|\pi). 
\end{equation*}
From Grownall's inequality, we have 
\begin{equation*}
   \mathcal{I}_{a,z}(p \|\pi)\leq e^{-2\kappa t}\mathcal{I}_{a,z}(p_0\|\pi). 
\end{equation*}
This finishes the proof of Corollary \ref{thm1}. We next prove Corollary \ref{col3}. Notice that 
\begin{equation*}
\begin{split}
-\mathrm{D}_{\mathrm{KL}}(p \|\pi)=&\int_t^{\infty}\frac{d}{ds}\mathrm{D}_{\mathrm{KL}}(p_s\|\pi)ds\\
=&-\int^\infty_t\mathcal{I}_a(p_s\|\pi)ds\\
\geq &-\int^{\infty}_t\mathcal{I}_{a,z}(p_s\|\pi) ds\\
\geq &\frac{1}{2\kappa}\int^\infty_t \frac{d}{ds}\mathcal{I}_{a,z}(p_s\|\pi)ds\\
=&-\frac{1}{2\kappa}\mathcal{I}_{a,z}(p \|\pi),
\end{split}
\end{equation*}
where we use the fact that $p_\infty=\pi$. Hence we prove the log-Sobolev inequality for $aa^{\ts}+zz^{\ts}$ with a bound $\kappa$. Using this log-Sobolev inequality for $aa^{\ts}+zz^{\ts}$, one can prove (i), (ii) directly. Notice that 
\begin{equation*}
\mathrm{D}_{\mathrm{KL}}(p \|\pi)\leq \frac{1}{2\kappa}\mathcal{I}_{a,z}(p \|\pi)\leq \frac{1}{2\kappa}e^{-2\kappa t}\mathcal{I}_{a,z}(p_0\|\pi).     
\end{equation*}
Following an inequality between KL divergence and $L_1$ distance, i.e., 
\begin{equation*}
    \int |p(t,x)-\pi(x)| dx\leq \sqrt{2\mathrm{D}_{\mathrm{KL}}(p \|\pi)},
\end{equation*}
we can show the exponential decay of the solution in terms of $L_1$ distances. This follows from the exponential decay of KL divergence. We finish the proof. 
\end{proof}

\section*{Appendix: Proof of examples.}\label{sec3}
In the appendix, we provide detailed proofs for all examples.

\begin{proof}[Proof of Proposition \ref{prop: non deg non rev a}]
Following Definition \ref{def: curvature sum}, with non-degenerate square matrix $a(x)\in\hR^{n\times n}$ defined in \eqref{non degenerate a}, we have the following tensor:
\[
\mathfrak{R}=\mathfrak R_{a}-\Lambda_1^{\ts}\Lambda_1+\mathsf D^{\ts}\mathsf D+\beta\mathfrak R_{\mathcal I_a}+(1-\beta) \mathfrak R_{\gamma_a},
\]
where $z=0$. We only need to find the vector $\Lambda_1$ ($\Lambda_2=0$ since $z=0$), which satisfies the following condition, 
\[
	(\mathsf Q^{\ts}\Lambda_1)^{\ts}\mathsf X=(\mathsf C+\mathsf Q^{\ts}\mathsf D)^{\ts}\mathsf X,
\]
where all vectors are defined in Definition \ref{def: curvature sum} and Definition \ref{definition: F G V}.
Due to the special assumption of matrix $a$, it is easy to check that $a^{\ts}_i\nabla a^{\ts}_{k\hat k}=0$, if $i\neq k$, and $a^{\ts}_{i\hat i}=0$, if $i\neq \hat i$. We thus have 
\[
\mathsf C_{\hat i\hat k}=0,\quad \text{for}\quad \hat i,\hat k=1,\cdots, n,
\]
which directly implies that $-\Lambda_1^{\ts}\Lambda_1+\mathsf D^{\ts}\mathsf D=0$. We next compute $\mathfrak R_a$ as in Definition \ref{def: curvature sum}. Following the special form of matrix $a$, we observe that,
\begin{align*}
& \sum_{i,k=1}^n a^{\ts}_i\nabla a^{\ts}_i\nabla  a^{\ts}_k \mathsf{U} (a^{\ts}_k\mathsf{U})+\sum_{i,k=1}^n a^{\ts}_i a^{\ts}_i\nabla^2a^{\ts}_k \mathsf{U} (a^{\ts}_k\mathsf{U})\nonumber  \\
	& -\sum_{i,k=1}^n a^{\ts}_k\nabla a^{\ts}_i\nabla a^{\ts}_i\mathsf{U}(a_k^{\ts}\mathsf{U})-\sum_{i,k=1}^n a^{\ts}_k a^{\ts}_i \nabla^2a^{\ts}_i\mathsf{U}(a_k^{\ts}\mathsf{U})\nonumber \\
	=&\sum_{i=k=1}^n a^{\ts}_i\nabla a^{\ts}_i\nabla  a^{\ts}_i \mathsf{U} (a^{\ts}_i\mathsf{U})+\sum_{i=k=1}^n a^{\ts}_i a^{\ts}_i\nabla^2a^{\ts}_i \mathsf{U} (a^{\ts}_i\mathsf{U})\nonumber  \\
	& -\sum_{i=k=1}^n a^{\ts}_i\nabla a^{\ts}_i\nabla a^{\ts}_i\mathsf{U}(a_i^{\ts}\mathsf{U})-\sum_{i=k=1}^n a^{\ts}_i a^{\ts}_i \nabla^2a^{\ts}_i\mathsf{U}(a_i^{\ts}\mathsf{U})=0, \\
\end{align*}
and 
\begin{align*}
    	&\la \nabla a, \sum_{k=1}^n\Big[ a^{\ts}\nabla a^{\ts}_k\mathsf{U}-a^{\ts}_k \nabla a^{\ts}\mathsf{U})\Big] a^{\ts}_k\mathsf{U}\ra \\
    	=&\sum_{i=1}^n \partial_{x_i} a^{\ts}_{ii} \Big(\sum_{k=1}^n\Big[ a^{\ts}_{ii}\nabla a^{\ts}_k\mathsf{U}-a^{\ts}_k \nabla a^{\ts}_{ii}\mathsf{U})\Big] a^{\ts}_k\mathsf{U}\Big)
    	=\sum_{i=1}^n \partial_{x_i} a^{\ts}_{ii} \Big(\Big[ a^{\ts}_{ii}\nabla a^{\ts}_i\mathsf{U}-a^{\ts}_i \nabla a^{\ts}_{ii}\mathsf{U})\Big] a^{\ts}_k\mathsf{U}\Big)=0.
\end{align*}
We thus have
\begin{align*}
\mathfrak R_{a}(\mathsf{U},\mathsf{U})=
&\sum_{i=1}^n\sum_{\hat k=1}^{n}\Big[ (aa^{\ts} \nabla\log \pi)_{\hat k} \nabla_{\hat k} a^{\ts}_i\mathsf{U}-a_i^{\ts}\nabla (aa^{\ts}\nabla\log \pi)_{\hat k} \mathsf U_{\hat k}\Big]a^{\ts}_i\mathsf{U}-\la  \left(a^{\ts} \nabla^2 a\circ  (a^{\ts}\mathsf{U})\right),a^{\ts}\mathsf{U}\ra_{\hR^n}.\nonumber
\end{align*}
Based on the definition of $\pi=\frac{1}{Z}e^{-V}$, we obtain $aa^{\ts}\nabla \log \pi=-aa^{\ts}\nabla V$. Observe that $a=a^{\ts}$. We thus get the following representation: 
\begin{align*}
\mathfrak R_{a}(\mathsf{U},\mathsf{U})=
&\sum_{i=1}^n\sum_{\hat k=1}^{n}\Big[ (aa^{\ts} \nabla\log \pi)_{\hat k} \nabla_{\hat k} a^{\ts}_i\mathsf{U}-a_i^{\ts}\nabla (aa^{\ts}\nabla\log \pi)_{\hat k} \mathsf U_{\hat k}\Big]a^{\ts}_i\mathsf{U}-\la  \left(a^{\ts} \nabla^2 a\circ  (a^{\ts}\mathsf{U})\right),a^{\ts}\mathsf{U}\ra_{\hR^n}\\
=&-\sum_{i=1}^n a_{ii}^2\partial_{x_i}V \partial_{x_i}a_{ii}\mathsf U_i a_{ii}\mathsf U_i+\sum_{i=1}^n\sum_{k=1}^na_{ii}\partial_{x_i}(a_{kk}^2\partial_{x_k}V)\mathsf U_k a_{ii}\mathsf{U}_i-\sum_{i=1}^na_{ii}\partial^2_{x_ix_i} a_{ii}a_{ii}\mathsf{U}_i a_{ii}\mathsf U_i\\
=&\sum_{i=1}^n[ a_{ii}^3\partial_{x_i}a_{ii}\partial_{x_i}V +a_{ii}^4\partial^2_{x_ix_i}V-a_{ii}^3\partial_{x_ix_i}^2a_{ii}  ](\mathsf{U}_i)^2+\sum_{i,k=1,i\neq k}^n a_{ii}^2a_{kk}^2\partial^2_{x_ix_k}V \mathsf{U}_i\mathsf{U}_k\\
=&\mathsf{U}^{\ts}\mathfrak R_a \mathsf U,
\end{align*}
where $\mathfrak R_a$ is defined in \ref{ricci curvature}. Similarly, we compute $\mathfrak R_{\mathcal I_a}$ and $\mathfrak R_{\gamma_a}$ as below.
\beaa 
\mathfrak R_{\mathcal I_a}(\mathsf{U},\mathsf{U})
&=& \la \mathsf{U},\gamma \ra\Big[ (\nabla a)\circ (a^{\ts}\mathsf{U})+\sum_{i=1}^na^{\ts}_i\nabla a^{\ts}_i\mathsf{U}+\la aa^{\ts}\nabla\log\pi,\mathsf{U}\ra\Big]\nonumber-\sum_{\hat i=1}^n\sum_{k=1}^{n+m} \gamma_{k}\nabla_{x_{k}} a^{\ts}_{\hat i}\mathsf{U}( a^{\ts}_{\hat i}\mathsf{U}),\nonumber\\ 
&=&\sum_{i=1}^n\mathsf U_i\gamma_i\sum_{k=1}^n[ 2a^{\ts}_{kk}\pa_{x_k}a^{\ts}_{kk}-(a^{\ts}_{kk})^2\pa_{x_k}V]\mathsf U_k-\sum_{k=1}^n\gamma_k \pa_{x_k}a^{\ts}_{kk}a^{\ts}_{kk} (\mathsf U_k)^2\\
&=&\mathsf U^{\ts} \mathfrak R_{\mathcal I_a}\mathsf U\\
\mathfrak R_{\gamma_a}(\mathsf{U},\mathsf{U})&=& \frac{1}{2}\sum_{\hat k=1}^{n}\gamma_{\hat k} \la \mathsf U,\nabla_{\hat k}(aa^{\ts})\mathsf U\ra -  \la \nabla \gamma \mathsf{U},aa^{\ts}\mathsf{U}\ra_{\mathbb R^{n} }\\
&=&(\mathsf U)^{\ts}\Big(\textbf{Diag}\Big\{\gamma_ia^{\ts}_{ii}(x_i)\pa_{x_i}a^{\ts}_{ii}(x_i)\Big\}_{i=1}^n-\frac{1}{2}[(\nabla \gamma)^{\ts}aa^{\ts}+aa^{\ts}\nabla \gamma ]
\Big)\mathsf U\\
&=& \mathsf U^{\ts}\mathfrak R_{\gamma_a}\mathsf U.
\eeaa \qed
\end{proof}

\noindent
\textbf{Proof of Example I: underdamped Langevin dynamics.}
For underdamped Langevin dynamics with variable coefficients defined in \eqref{variable temp underdamped}, we first observe that 
\beaa
aa^{\ts}&=&\begin{pmatrix}0&0\\
0& r(x)
\end{pmatrix},\quad \Big(\sum_{j=1}^2\frac{\partial}{\partial x_j}(aa^{\ts})_{ij} \Big)_{1\le i\le 2}=\begin{pmatrix}0\\
0
\end{pmatrix},\quad 
 aa^{\ts}\nabla \log \pi = 
\begin{pmatrix}0\\
(a^{\ts}_{12})^2 \frac{\partial  \log\pi }{\partial v}
\end{pmatrix}.
\eeaa
By routine computations, we have the following proposition.
\begin{proposition}\label{cor: constant z}
For any constant $\beta\in\mathbb R$, and any smooth function $f\in \mathcal C^{\infty}(\mathbb R^2)$, we have
	\beaa
&&\int \Big[ \widetilde \Gamma_{2}(f,f)+\widetilde \Gamma_2^{z,\pi}(f,f)+\Gamma_{\mathcal I_{a,z}}(f,f)\Big] pdx\\
&=&\int \Big[  \|\mathfrak{Hess}_{\beta}f\|^2_{\mathrm{F}}
+\mathfrak{R}(\nabla f,\nabla f) +\mathfrak{R}_{\mathcal I}(\nabla f,\nabla f)\Big]pdx,
\eeaa 
where
\beaa
\|\mathfrak{Hess}_{\beta}f\|_{\mathrm{F}}^2&=&[\mathsf X+\Lambda]^{\ts}(\mathsf Q^{\ts}\mathsf Q+\mathsf P^{\ts}\mathsf P )[\mathsf X+\Lambda],
\eeaa 
with
\beaa\label{C pi}
 \Lambda&=&(0,0,\Lambda_3,\Lambda_4), \quad  \mathsf Q=\begin{pmatrix}
0&0&0& (a^{\ts}_{12})^2
\end{pmatrix},\quad \mathsf P= \begin{pmatrix}
0&0&a^{\ts}_{12}z_1&a^{\ts}_{12}z_2
\end{pmatrix},\\
\begin{pmatrix}
\Lambda_3\\ \Lambda_4
\end{pmatrix}&=&\frac{1}{(a^{\ts}_{12})^2}(aa^{\ts}+zz^{\ts})^{-1}\mathsf K\nabla f.
\eeaa 
And the curvature tensor $\mathfrak{R}$ is presented in Theorem \ref{thm: constant z}.
\end{proposition}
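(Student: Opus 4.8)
The plan is to read Proposition~\ref{cor: constant z} off the general information Bochner formula of Theorem~\ref{thm: information bochner} after substituting the data of \eqref{variable temp underdamped} and \eqref{invariant}. Here $n=m=1$, so $a^{\ts}=(0,a^{\ts}_{12})$ with $a^{\ts}_{12}=\sqrt{r(x)}$, $z^{\ts}=(z_1,z_2)$ is constant, $\nabla\log\pi=-(U'(x),v)^{\ts}$, and the irreversible field is the one already recorded, $\gamma=(-v,\,U'(x))^{\ts}$. The first step is to verify Assumption~\ref{prop:main condition}: since $a$ depends only on $x$ and its unique nonzero slot is the $v$-component, $\nabla a^{\ts}_1$ has only an $x$-component and that component is a scalar multiple of $a^{\ts}_1$ itself, so $z^{\ts}_1\nabla a^{\ts}_1=z_1\,\partial_x a^{\ts}_1\in\mathrm{Span}\{a^{\ts}_1\}$. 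This is the degenerate analogue of case (ii) in the Remark following Corollary~\ref{col3}. From here one reads off the coefficients $\lambda^{i'k}_l$ of Definition~\ref{def: curvature sum}, notes $\omega^{i'k}_l\equiv 0$ because $z$ is constant, and obtains the $\alpha_l$ from $\gamma=\alpha_1 a^{\ts}+\alpha_2 z^{\ts}$; the latter is solvable precisely because $z_1\neq 0$, which must be assumed throughout (otherwise $aa^{\ts}+zz^{\ts}$ stays rank one and no convergence can be extracted).

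With Assumption~\ref{prop:main condition} in force, Theorem~\ref{thm: information bochner} yields
\beaa
\int \Big[ \widetilde \Gamma_{2}(f,f)+\widetilde \Gamma_2^{z,\pi}(f,f)+\Gamma_{\mathcal I_{a,z}}(f,f)\Big] pdx
&=&\int \Big[(\mathsf Q\mathsf X+\Lambda_1)^{2}+(\mathsf P\mathsf X+\Lambda_2)^{2}+\mathfrak R(\nabla f,\nabla f)\Big]pdx,
\eeaa
since $n^{2}=nm=1$ makes $\Lambda_1,\Lambda_2$ scalars. Next I would evaluate every ingredient. By definition $\mathsf Q=a^{\ts}\otimes a^{\ts}=(0,0,0,(a^{\ts}_{12})^{2})$ and $\mathsf P=a^{\ts}\otimes z^{\ts}=(0,0,a^{\ts}_{12}z_1,a^{\ts}_{12}z_2)$, while $\mathsf D=\mathsf E=0$ because $\partial_v a^{\ts}=0$ and $\nabla z^{\ts}=0$. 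Substituting the special $a,z,\pi$ into the defining formulas of $\mathfrak R_a,\mathfrak R_z,\mathfrak R_{\pi},\mathfrak R_{\mathcal I_a},\mathfrak R_{\gamma_a},\mathfrak R_{\gamma_z}$ in Definition~\ref{def: curvature sum}, most nested index sums collapse (again because $a$ is supported in the $v$-slot and is a function of $x$ alone), and one recovers exactly the matrices listed in Proposition~\ref{thm: constant z}, along with explicit expressions for $\Lambda_1,\Lambda_2$ linear in $\nabla f$ with coefficients built from $z_1,z_2,\partial_x a^{\ts}_{12}$ and $\beta\gamma_1$.

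The final step is the completion of squares. Both $\mathsf Q$ and $\mathsf P$ are supported on the two components indexed by $(\hat i,\hat k)\in\{(2,1),(2,2)\}$, so I would introduce the $2\times 2$ matrix $M$ whose rows are the restrictions of $\mathsf Q$ and $\mathsf P$ to those components. One checks $\det M=-(a^{\ts}_{12})^{3}z_1\neq 0$ and, crucially, $M^{\ts}M=(a^{\ts}_{12})^{2}(aa^{\ts}+zz^{\ts})$. Solving $M(\Lambda_3,\Lambda_4)^{\ts}=(\Lambda_1,\Lambda_2)^{\ts}$ gives $(\Lambda_3,\Lambda_4)^{\ts}=(a^{\ts}_{12})^{-2}(aa^{\ts}+zz^{\ts})^{-1}M^{\ts}(\Lambda_1,\Lambda_2)^{\ts}$, and matching $M^{\ts}(\Lambda_1,\Lambda_2)^{\ts}$ with $\mathsf K\nabla f$ pins down $\mathsf K$ exactly as in the statement. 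With $\Lambda:=(0,0,\Lambda_3,\Lambda_4)$ one then has $\mathsf Q\Lambda=\Lambda_1$ and $\mathsf P\Lambda=\Lambda_2$, so
\beaa
(\mathsf Q\mathsf X+\Lambda_1)^{2}+(\mathsf P\mathsf X+\Lambda_2)^{2}=[\mathsf X+\Lambda]^{\ts}(\mathsf Q^{\ts}\mathsf Q+\mathsf P^{\ts}\mathsf P)[\mathsf X+\Lambda]=\|\mathfrak{Hess}_{\beta}f\|^{2}_{\mathrm{F}},
\eeaa
and, using $M^{\ts}M=(a^{\ts}_{12})^{2}(aa^{\ts}+zz^{\ts})$ once more, $\Lambda_1^{2}+\Lambda_2^{2}=\mathfrak M_{\Lambda}(\nabla f,\nabla f)$ with $\mathfrak M_{\Lambda}$ as in Proposition~\ref{thm: constant z}. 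Since $\mathsf D=\mathsf E=0$, the residual $-\Lambda_1^{\ts}\Lambda_1-\Lambda_2^{\ts}\Lambda_2+\mathsf D^{\ts}\mathsf D+\mathsf E^{\ts}\mathsf E$ occurring in the general $\mathfrak R$ of Theorem~\ref{thm: information bochner} is exactly $-\mathfrak M_{\Lambda}$, so the curvature term reorganizes into the tensor $\mathfrak R$ displayed in Proposition~\ref{thm: constant z} (with the part $\beta\mathfrak R_{\mathcal I_a}+(1-\beta)\mathfrak R_{\gamma_a}+\mathfrak R_{\gamma_z}$ grouped as the separate irreversible term $\mathfrak R_{\mathcal I}$ in the displayed identity), completing the identification.

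I expect the real difficulty to lie not in the three structural moves above (verifying Assumption~\ref{prop:main condition}, quoting Theorem~\ref{thm: information bochner}, and the two-dimensional square completion), which are short, but in the brute-force reduction of the six curvature tensors and of $\Lambda_1,\Lambda_2$ from Definition~\ref{def: curvature sum}: nearly every surviving term passes through the single nonzero entry $a^{\ts}_{12}=\sqrt r$ and its $x$-derivative, and keeping the signs and the $\beta\gamma_1$-contributions straight is the delicate point. A secondary caveat is that the invertibility of $M$, and hence the entire square-completion, relies on $z_1\neq 0$.
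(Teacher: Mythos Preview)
Your proposal is correct and follows essentially the same line as the paper: specialize the information Bochner machinery to the data \eqref{variable temp underdamped}, observe that $\mathsf D=\mathsf E=0$ and that $\mathsf Q,\mathsf P$ are supported on the last two indices, and then complete the square to pass from $(\Lambda_1,\Lambda_2)$ to a common $\Lambda=(0,0,\Lambda_3,\Lambda_4)$, with $M^{\ts}M=(a^{\ts}_{12})^{2}(aa^{\ts}+zz^{\ts})$ producing the expression $(a^{\ts}_{12})^{-2}(aa^{\ts}+zz^{\ts})^{-1}\mathsf K\nabla f$.

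The only organizational difference worth noting is that the paper does \emph{not} invoke Theorem~\ref{thm: information bochner} as a black box and then compute the scalars $\Lambda_1,\Lambda_2$ from Definition~\ref{def: curvature sum} (which would force you to first extract the auxiliary coefficients $\lambda^{i'k}_l,\omega^{i'k}_l,\alpha_l$). Instead it works one level lower: it computes the raw vectors $\mathsf C,\mathsf F,\mathsf G,\mathsf V^a\in\mathbb R^{4}$ of Definition~\ref{definition: F G V} directly for this $a,z$, and uses the equivalent square-completion of Remark~\ref{hessian relation} to solve $(\mathsf Q^{\ts}\mathsf Q+\mathsf P^{\ts}\mathsf P)\Lambda=\mathsf F+\mathsf C+\mathsf G+\beta\mathsf V^a$ for a single $\Lambda$. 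Your route and the paper's are linked by the identity $\mathsf Q^{\ts}\Lambda_1+\mathsf P^{\ts}\Lambda_2=\mathsf F+\mathsf C+\mathsf G+\beta\mathsf V^a$ (Step~B of the proof of Theorem~\ref{thm: information bochner} with $\mathsf D=\mathsf E=0$), so the restriction of either side to the last two components is exactly your $M^{\ts}(\Lambda_1,\Lambda_2)^{\ts}=(\mathsf K_1,\mathsf K_2)^{\ts}$. The paper's shortcut avoids the $\lambda,\omega,\alpha$ bookkeeping you flag as the delicate point; your route is slightly longer but equally valid.
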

\begin{remark}\label{hessian relation}
We use a different notion of the Hessian of function $f$, which gives us the other equivalent formulation of the tensor $\mathfrak{R}$. The key observation is the following relation 
\beaa
&& [\mathsf Q\mathsf X+\widehat\Lambda_1]^{\ts} [\mathsf Q\mathsf X+\widehat\Lambda_1]+[\mathsf P\mathsf X+\widehat\Lambda_2]^{\ts}[\mathsf P\mathsf X+\widehat\Lambda_2]-\widehat\Lambda_1^{\ts}\widehat\Lambda_1-\widehat\Lambda_2^{\ts}\widehat\Lambda_2\\
&=& [\mathsf X+ \Lambda_1]^{\ts}\mathsf Q^{\ts}\mathsf Q [\mathsf X+\Lambda_1]+[\mathsf X+\Lambda_2]^{\ts}\mathsf P^{\ts}\mathsf P [\mathsf X+\Lambda_2]-\Lambda_1^{\ts}\mathsf Q^{\ts}\mathsf Q\Lambda_1-\Lambda_2^{\ts}\mathsf P^{\ts}\mathsf P\Lambda_2,
\eeaa
where $\widehat \Lambda_1^{\ts} = \Lambda_1^{\ts}\mathsf Q^{\ts},\quad \widehat\Lambda_2^{\ts} =\Lambda_2^{\ts}\mathsf P^{\ts}$. Shortly, we show that the vectors $\Lambda_1\in \mathbb R^{(n+m)^2\times 1}$ and $\Lambda_2\in \mathbb R^{(n+m)^2\times 1}$ exist. And we compute them explicitly for the variable coefficient underdamped Langevin dynamics. 
\end{remark}
\begin{proof}[Proof of Proposition \ref{cor: constant z}]
We demonstrate explicit examples with constant matrix $z$, i.e. $z^{\ts}=(z_1\quad z_2)$, with $z_1$ and $z_2$ being constants. In particular, we have used the notation below: $z^{\ts}_1=(z^{\ts}_{11}\quad z^{\ts}_{12})=(z_1\quad z_2)$, and $\pa_1f=\pa_xf$, $\pa_2f=\pa_vf$.

\noindent\textbf{Step 1:} We first have the following simplified quantities:
\beaa 
\mathfrak R_a(\nabla f,\nabla f)&=&-(a^{\ts}_{12})^2 \frac{\partial^2  \log\pi }{\partial v^2} |a^{\ts}_{12}\partial_2f|^2,
\quad \mathfrak R_z(\nabla f,\nabla f)=-z^{\ts}_1\nabla((a^{\ts}_{12})^2 \frac{\partial  \log\pi }{\partial v})\partial_2f z^{\ts}_1\nabla f,\\
\label{tensor R Psi}\mathfrak{R}_{\pi}(\nabla f,\nabla f)
	&=&2\Big[z^{\ts}_{1} z^{\ts}_{1} \nabla^2 a^{\ts}_{1} \nabla f   a^{\ts}_{1}\nabla f+(z^{\ts}_{1}\nabla a^{\ts}_1\nabla f )^2+ (z^{\ts}\nabla\log\pi)_1 \left[ z^{\ts}_{ 1}\nabla a^{\ts}_1\nabla f a^{\ts}_1\nabla f \right]\Big] \nonumber,\\
\mathfrak R_{\mathcal I_a}(\nabla f,\nabla f)&=&\la \nabla f,\gamma\ra \la aa^{\ts}\nabla\log\pi ,\nabla f\ra -\gamma_1\frac{\partial }{\partial x}a^{\ts}_{12}\partial_2 fa^{\ts}_{12}\partial_2 f,\\ 
\mathfrak R_{\gamma_a}(\nabla f,\nabla f)&=&\frac{1}{2}\sum_{k=1}^2 \gamma_k  \la \nabla f, \nabla_k(aa^{\ts})\nabla f\ra -\la\begin{pmatrix}
\nabla_1\gamma_1&\nabla_1\gamma_2\\
\nabla_2\gamma_1&\nabla_2\gamma_2
\end{pmatrix}\begin{pmatrix}
\pa_1f\\
\pa_2 f
\end{pmatrix}, aa^{\ts}\nabla f \ra,\\ 
\mathfrak R_{\gamma_z}(\nabla f,\nabla f)&=&  -\la\begin{pmatrix}
\nabla_1\gamma_1&\nabla_1\gamma_2\\
\nabla_2\gamma_1&\nabla_2\gamma_2
\end{pmatrix}\begin{pmatrix}
\pa_1f\\
\pa_2f
\end{pmatrix}, zz^{\ts}\nabla f \ra.
\eeaa 
And
\beaa
\mathsf E&=&\mathsf D=0,\quad \mathsf C=(0,0,0,0)^{\ts},\quad \mathsf F=\Big(0,0,0,\mathsf F_{22} \Big)^{\ts},\quad  \mathsf F_{22}=-z_{11}^{\ts}  \pa_x a^{\ts}_{12}a^{\ts}_{12} z^{\ts}_1\nabla f,\\
 \mathsf G&=&\Big(0 ,0 ,\mathsf G_{21} ,\mathsf G_{22} \Big)^{\ts},\quad  \mathsf G_{21}=2(z^{\ts}_{11})^2\partial_x a^{\ts}_{12} a^{\ts}_{12}\partial_2f,\quad  \mathsf G_{22}= 2z^{\ts}_{11}z^{\ts}_{12}\pa_x a^{\ts}_{12} a^{\ts}_{12}\partial_2 f, \\
 \mathsf V^a&=& (0,0,\mathsf V^a_{21},\mathsf V^a_{22} ),\quad\mathsf V^a_{21}= -\frac{1}{2}\gamma_1a^{\ts}_{12}a^{\ts}_{12}\partial_2f, \quad \mathsf V^a_{22}= -\frac{1}{2}\gamma_2a^{\ts}_{12}a^{\ts}_{12}\partial_2f+\frac{1}{2}a^{\ts}_{12}a^{\ts}_{12}\la\nabla f,\gamma\ra. 
\eeaa

\noindent\textbf{Step 2:} We now find the vector $\Lambda$ in Remark \ref{hessian relation}.
By Definition \ref{def: curvature sum}, we have 
\beaa
P^TP=(a^{\ts}_{12})^2\begin{pmatrix}
	0&0&0&0\\
	0&0&0&0\\
	0&0&z_1^2&z_1z_2\\
	0&0&z_1z_2&z_2^2\\
\end{pmatrix},\quad \text{and}\quad Q^TQ=(a^{\ts}_{12})^2\begin{pmatrix}
	0&0&0&0\\
	0&0&0&0\\
	0&0&0&0\\
	0&0&0&(a^{\ts}_{12})^2\\
\end{pmatrix},
\eeaa
and $\mathsf Q^{\ts}D=(0,0,0,0)^{\ts}$, $\mathsf P^{\ts}E=( 0,0,z^{\ts}_{11}a^{\ts}_{12} \mathsf E,z^{\ts}_{12}a^{\ts}_{12} \mathsf E)^{\ts}$. For simplicity, taking $\Lambda_1=\Lambda_2=\Lambda=(0,0,\Lambda_3,\Lambda_4)$, we have
	\beaa
	(\mathsf Q^{\ts}\mathsf Q\Lambda+\mathsf P^{\ts}\mathsf P \Lambda)^{\ts}\mathsf X&=&(\mathsf F+\mathsf C+\mathsf G+\beta \mathsf V^a+\mathsf Q^{\ts}\mathsf D+\mathsf P^{\ts}\mathsf E)^{\ts}\mathsf X.
	\eeaa
By matching the coefficients of $\frac{\pa^2 f}{\pa x\pa v}$ and $\frac{\pa^2 f}{\pa v\pa v}$ for both sides of the above equation, we have 
\beaa
(a^{\ts}_{12})^2\begin{pmatrix}
z_1^2&z_1z_2\\
z_1z_2&z_2^2+(a^{\ts}_{12})^2
\end{pmatrix}\begin{pmatrix}
\Lambda_3\\
\Lambda_4
\end{pmatrix}=\begin{pmatrix}
\mathsf K_1\\
\mathsf K_2
\end{pmatrix},
\eeaa
with
\beaa
\mathsf K_1=: \mathsf F_{21}+\mathsf G_{21}+\beta\mathsf V^a_{21},\quad 
\mathsf K_2=:\mathsf F_{22}+\mathsf G_{22}+\beta\mathsf V^a_{22}.
\eeaa 
Notice that
\beaa 
\mathsf K_1&=& \Big(2(z^{\ts}_{11})^2\partial_x a^{\ts}_{12} a^{\ts}_{12}-\frac{\beta}{2} \gamma_1a^{\ts}_{12}a^{\ts}_{12}\Big)\pa_2f:= \mathsf K_{12} \pa_2f,  \\ 
\mathsf K_2&=& [-z_1^2\pa_x[a^{\ts}_{12}]a^{\ts}_{12}+\frac{\beta}{2}(a^{\ts}_{12})^2\gamma_1]\pa_1f+z_1z_2\pa_x[a^{\ts}_{12}]a^{\ts}_{12}\pa_2f:=\mathsf K_{21}\pa_1f+\mathsf K_{22}\pa_2f.
\eeaa 
Based on the above computation, we have 
\beaa
\begin{pmatrix}
\Lambda_3\\
\Lambda_4
\end{pmatrix}=\frac{1}{(a^{\ts}_{12})^2}\begin{pmatrix}
z_1^2&z_1z_2\\
z_1z_2&z_2^2+(a^{\ts}_{12})^2
\end{pmatrix}^{-1}\begin{pmatrix}
\mathsf K_{11}&\mathsf K_{12}\\
\mathsf K_{21}&\mathsf K_{22}
\end{pmatrix} \nabla f=\frac{1}{(a^{\ts}_{12})^2}(aa^{\ts}+zz^{\ts})^{-1}\mathsf K\nabla f.
\eeaa
Since $aa^{\ts}+zz^{\ts}$ is symmetric, we have the following relation 
\beaa 
\Lambda^{\ts}(\mathsf Q^{\ts}\mathsf Q+\mathsf P^{\ts}\mathsf P)\Lambda&=&(\nabla f)^{\ts} \mathfrak{M}_{\Lambda} \nabla f,
\eeaa 
with the matrix $\mathfrak M_{\Lambda}$ defined by 
\beaa \label{matrix lambda}
\mathfrak M_{\Lambda} =\frac{1}{(a^{\ts}_{12})^2} ( \mathsf K)^{\ts}(aa^{\ts}+zz^{\ts})^{-1}\mathsf K.
\eeaa

\noindent\textbf{Step 3:} We next transfer all the bilinear forms into its corresponding symmetric  matrix forms. In particular, for each bi-linear form $\mathfrak R(\nabla f,\nabla f)$, we keep the convention
\beaa 
\mathfrak R(\nabla f,\nabla f)=(\nabla f)^{\ts}\mathfrak R\nabla f.
\eeaa 
We focus on the symmetric matrix form $\mathfrak R$: 
\beaa 
\mathfrak R_a(\nabla f,\nabla f)&=&-(a^{\ts}_{12})^2 \frac{\partial^2  \log\pi }{\partial v^2} |a^{\ts}_{12}\partial_2f|^2
=(\nabla f)^{\ts} \begin{pmatrix}
0&0\\
0& -\frac{\partial^2  \log\pi }{\partial v^2} |a^{\ts}_{12}|^4
\end{pmatrix}\nabla f,
\eeaa 

\beaa 
\quad \mathfrak R_z(\nabla f,\nabla f)&=&-z^{\ts}_1\nabla((a^{\ts}_{12})^2 \frac{\partial  \log\pi }{\partial v})\partial_2f z^{\ts}_1\nabla f\\
&=&\frac{1}{2} (\nabla f)^{\ts} \Big[  \begin{pmatrix} 
0\\
-z^{\ts}_1\nabla((a^{\ts}_{12})^2 \frac{\partial  \log\pi }{\partial v})
\end{pmatrix} z^{\ts}_1+z_1\begin{pmatrix}
0& -z^{\ts}_1\nabla((a^{\ts}_{12})^2 \frac{\partial  \log\pi }{\partial v})
\end{pmatrix} \Big]\nabla f,
\eeaa 
\beaa 
\label{tensor R Psi}\mathfrak{R}_{\pi}(\nabla f,\nabla f)
	&=&2\Big[z^{\ts}_{1} z^{\ts}_{1} \nabla^2 a^{\ts}_{1} \nabla f   a^{\ts}_{1}\nabla f+(z^{\ts}_{1}\nabla a^{\ts}_1\nabla f )^2+ (z^{\ts}\nabla\log\pi)_1 \left[ z^{\ts}_{ 1}\nabla a^{\ts}_1\nabla f a^{\ts}_1\nabla f \right]\Big] \nonumber\\
	&=& (\nabla f)^{\ts} \begin{pmatrix}
0&0\\
0& C_{\pi}
\end{pmatrix}\nabla f,
	\eeaa 
	with 
\bea\label{C pi}
C_\pi=2\Big[z^{\ts}_{1} z^{\ts}_{1} \nabla^2 a^{\ts}_{12}  a^{\ts}_{12}+(z^{\ts}_{1}\nabla a^{\ts}_{12} )^2+ (z^{\ts}\nabla\log\pi)_1 [ z^{\ts}_{ 1}\nabla a^{\ts}_{12} a^{\ts}_{12}]\Big].
\eea 
We also have 
	\beaa 
\mathfrak R_{\mathcal I_a}(\nabla f,\nabla f)&=&\la \nabla f,\gamma\ra \la aa^{\ts}\nabla\log\pi ,\nabla f\ra -\gamma_1\frac{\partial }{\partial x}a^{\ts}_{12}\partial_2 fa^{\ts}_{12}\partial_2 f\\ 
&=& (\nabla f)^{\ts}\Big[\frac{\gamma (aa^{\ts}\nabla\log\pi)^{\ts}+ (aa^{\ts}\nabla\log\pi) \gamma^{\ts}}{2} - \begin{pmatrix}
0&0\\
0& \gamma_1\frac{\partial }{\partial x}a^{\ts}_{12}a^{\ts}_{12}
\end{pmatrix}\Big] \nabla f,
\eeaa 
and 
\beaa 
\mathfrak R_{\gamma_a}(\nabla f,\nabla f)&=& \frac{1}{2}\la \nabla f,\sum_{k=1}^2\gamma_k \nabla_k(aa^{\ts}) \nabla f \ra - \la \nabla \gamma \nabla f,aa^{\ts}\nabla f \ra   \\ 
&=& (\nabla f)^{\ts}\Big[\frac{1}{2} \sum_{k=1}^2 \gamma_k\nabla_k(aa^{\ts})-\frac{1}{2}[(\nabla \gamma)^{\ts}aa^{\ts}+aa^{\ts} \nabla \gamma]  \Big]\nabla f.
\eeaa 
The last equality follows from the fact that $aa^{\ts}$ is a diagonal matrix. Hence $\sum_{k=1}^2\gamma_k\nabla_k(aa^{\ts})$ is also a diagonal matrix. Similarly, we derive 
\beaa 
\mathfrak R_{\gamma_z}(\nabla f,\nabla f)&=&  -\la\begin{pmatrix}
\nabla_1\gamma_1&\nabla_1\gamma_2\\
\nabla_2\gamma_1&\nabla_2\gamma_2
\end{pmatrix}\begin{pmatrix}
f_1\\
f_2
\end{pmatrix}, zz^{\ts}\nabla f \ra,\\
&=&(\nabla f)^{\ts}\frac{1}{2} \Big[-(\nabla \gamma)^{\ts}zz^{\ts}-zz^{\ts}\nabla \gamma  \Big]\nabla f.
\eeaa 
The proof is completed.\qed
\end{proof}

\noindent
\textbf{Proof of example II: three oscillator chain model.}

\begin{proof}[Proof of Proposition \ref{prop: tensor oscilator}]
Following Theorem \ref{thm: information bochner}, consider a constant matrix $a$ and a matrix function $z$,
\beaa
z=\begin{pmatrix}
	z_1\mathsf I_3&0_{3\times 1}\\
	z_2\mathsf I_3&(\overrightarrow{z_3})_{3\times 1}
\end{pmatrix}_{6\times 4},
\eeaa
with $\overrightarrow{z_3}=(z_{31}\quad z_{32}(p_0,p_2)\quad z_{33})^{\ts}$ and constants $z_1$, $z_2$. We compute the matrix tensor $\mathfrak R$ for $\beta=0$. By abusing the notations, we denote $z^{\ts}_i$ as the $i$-th row vector of the transpose matrix $z^{\ts}$. We first find vectors $\Lambda_1$ and $\Lambda_2$. For vectors $\mathsf C, \mathsf F,\mathsf G, \mathsf V^a \in \mathbb R^{(6\times 6)\times 1}$, $\mathsf D \in \mathbb R^{(2\times 2)\times 1}$ and $\mathsf E\in \mathbb R^{(2\times 4)\times 1}$ with $\mathsf U\in\hR^{6}$, we have  
\beaa
E_{ik}&=&a^{\ts}_{i}\nabla z^{\ts}_k \mathsf U,\quad \mathsf E_{14,1}=a^{\ts}_{14}\pa_{v_0}z^{\ts}_{45} \mathsf U_5,\quad \mathsf E_{24,1}=a^{\ts}_{26}\pa_{p_2}z^{\ts}_{45}\mathsf U_5;\\
\mathsf C_{\hat i\hat k} &=& \sum_{i,k=1}^2\Big( a^{\ts}_{i\hat i}a^{\ts}_{i}\nabla a^{\ts}_{k\hat k}  -a^{\ts}_{i\hat k}a^{\ts}_{k}\nabla a^{\ts}_{i\hat i} \Big) a^{\ts}_k \mathsf{U}=0,\quad \mathsf D_{ik}= a^{\ts}_{i}\nabla a^{\ts}_k\mathsf U=0;  \nonumber\\ 
\mathsf F_{\hat i\hat k}&=& \sum_{i=1}^2\sum_{k=1}^4\Big(  a^{\ts}_{i\hat i}a^{\ts}_{i}\nabla z^{\ts}_{k\hat k}\Big)z^{\ts}_k\mathsf U,\quad \mathsf F_{45,1}=|a^{\ts}_{14}|^2\pa_{p_0} z^{\ts}_{45}z^{\ts}_4\mathsf U,\quad \mathsf F_{65,1}=|a^{\ts}_{26}|^2\pa_{p_2}z^{\ts}_{45}z^{\ts}_4\mathsf U; \\
\mathsf G_{\hat i\hat k}&=& \sum_{i=1}^2\sum_{k=1}^4 \left[-\left(a^{\ts}_{i\hat i}a^{\ts}_{i}\nabla z^{\ts}_{k\hat k} z^{\ts}_k\mathsf U +z^{\ts}_{k\hat k}a^{\ts}_{i\hat i}a^{\ts}_{i}\nabla z^{\ts}_{k} \mathsf U\right)\right],\nonumber\\
\mathsf G_{45,1}&=&-|a^{\ts}_{14}|^2\pa_{p_0}z^{\ts}_{45}z^{\ts}_4\mathsf U-|a^{\ts}_{14}|^2\pa_{p_0}z^{\ts}_{45}z^{\ts}_{45}\mathsf U_5,\\
\mathsf G_{65,1}&=&-|a^{\ts}_{26}|^2\pa_{p_2}z^{\ts}_{45}z^{\ts}_4\mathsf U-|a^{\ts}_{26}|^2z^{\ts}_{45}\pa_{p_2}z^{\ts}_{45}\mathsf U_5; \\
(\mathsf P^{\ts} \mathsf E)_{45,1}&=&a^{\ts}_{14}z^{\ts}_{45}\mathsf E_{14,1}=|a^{\ts}_{14}|^2z^{\ts}_{45}\pa_{p_0}z^{\ts}_{45}\mathsf U_5,\quad (\mathsf P^{\ts} \mathsf E)_{65,1}=a^{\ts}_{26}z^{\ts}_{45}\mathsf E_{24,1}=|a^{\ts}_{26}|^2z^{\ts}_{45}\pa_{p_2}z^{\ts}_{45}\mathsf U_5.
\eeaa
By direct computations, we observe that
$\mathsf F+\mathsf G+\mathsf C+\mathsf Q^{\ts}\mathsf D+\mathsf P^{\ts}\mathsf E=\overrightarrow{0}_{36\times 1}$. We thus get $\Lambda_1=\Lambda_2={\bf 0}\in\hR^6$. We next compute the following matrix tensors. For simplicity, we directly omit the zero terms based on Definition \ref{def: curvature sum} and matrix functions $a$, $z$. We have
\beaa
	\mathfrak R_{a}(\mathsf{U},\mathsf{U})&=& 
-\sum_{i=1}^2\sum_{\hat k=1}^{6}\Big[a_i^{\ts}\nabla (aa^{\ts}\nabla\log \pi)_{\hat k} \mathsf U_{\hat k}\Big]a^{\ts}_i\mathsf{U}\\
&=&-a^{\ts}_{14}\pa_{p_0}(|a^{\ts}_{14}|^2\pa_{p_0}\log \pi)\mathsf U_4 a^{\ts}_{14}\mathsf U_4-a^{\ts}_{26}\pa_{p_2}(|a^{\ts}_{26}|^2\pa_{p_2}\log \pi)\mathsf U_6 a^{\ts}_{26}\mathsf U_6\\
&=& \mathsf U^{\ts}\begin{pmatrix}
	0&0\\
	0&(\xi T)^2\mathsf I_3^O
\end{pmatrix} \mathsf U,
\eeaa 
and
\beaa 
  \label{tenser z}\mathfrak R_{z}(\mathsf{U},\mathsf{U})&=&\sum_{i,k=1}^2 a^{\ts}_i a^{\ts}_i\nabla^2z^{\ts}_k \mathsf{U} (z^{\ts}_k\mathsf{U}) \nonumber \\
&&+\sum_{k=1}^4\sum_{\hat k=1}^{6}\Big[ (aa^{\ts} \nabla\log \pi)_{\hat k} \nabla_{\hat k} z^{\ts}_k\mathsf{U}-z_k^{\ts}\nabla (aa^{\ts}\nabla\log \pi)_{\hat k} \mathsf U_{\hat k}\Big]z^{\ts}_k\mathsf{U}\nonumber\\
&=&|a^{\ts}_{14}|^2\pa_{p_0p_0}z^{\ts}_{4}\mathsf U z^{\ts}_4 \mathsf U+|a^{\ts}_{26}|^2 \pa_{p_2p_2}z^{\ts}_4\mathsf U z^{\ts}_4\mathsf U\\
&&+(|a^{\ts}_{14}|^2 \pa_{p_0}\log\pi)\pa_{p_0}z^{\ts}_4\mathsf U z_4^{\ts}\mathsf U+(|a^{\ts}_{26})^2 \pa_{p_2}\log\pi)\pa_{p_2}z^{\ts}_4\mathsf U z_4^{\ts}\mathsf U\\
&&-z_1^{\ts}\nabla((a^{\ts}_{14})^2\pa_{p_0}\log \pi \mathsf U_4)z^{\ts}_1\mathsf U-z_3^{\ts}\nabla((a^{\ts}_{26})^2\pa_{p_2}\log \pi \mathsf U_6)z^{\ts}_3\mathsf U\\
&&-z^{\ts}_4\nabla[(a^{\ts}_{14})^2\pa_{p_0}\log \pi \mathsf U_4 +(a^{\ts}_{26})^2\pa_{p_2}\log \pi \mathsf U_6]z^{\ts}_4\mathsf U\\
&=&|a^{\ts}_{14}|^2\pa_{p_0p_0}z^{\ts}_{45}\mathsf U_5 z^{\ts}_4 \mathsf U+|a^{\ts}_{26}|^2 \pa_{p_2p_2}z^{\ts}_{45}\mathsf U_5 z^{\ts}_4\mathsf U\\
&&+(|a^{\ts}_{14}|^2 \pa_{p_0}\log\pi)\pa_{p_0}z^{\ts}_{45}\mathsf U_5 z_4^{\ts}\mathsf U+(|a^{\ts}_{26})^2 \pa_{p_2}\log\pi)\pa_{p_2}z^{\ts}_{45}\mathsf U_5 z_4^{\ts}\mathsf U\\
&&-z_1^{\ts}\nabla((a^{\ts}_{14})^2\pa_{p_0}\log \pi \mathsf U_4)z^{\ts}_1\mathsf U-z_3^{\ts}\nabla((a^{\ts}_{26})^2\pa_{p_2}\log \pi \mathsf U_6)z^{\ts}_3\mathsf U\\
&&-z^{\ts}_4\nabla[(a^{\ts}_{14})^2\pa_{p_0}\log \pi \mathsf U_4 +(a^{\ts}_{26})^2\pa_{p_2}\log \pi \mathsf U_6]z^{\ts}_4\mathsf U
\eeaa 
\beaa 
&=&\frac{1}{2}\mathsf U^{\ts} \Big\{ \begin{pmatrix}
	0&0&0&0\\
	0&0&0&0\\
	0&0&0&0\\
	-z_1^{\ts}\nabla((a^{\ts}_{14})^2\pa_{p_0}\log \pi )&0&0&-z^{\ts}_4\nabla((a^{\ts}_{14})^2\pa_{p_0}\log \pi) \\
	0&0&0&\mathsf S_1\\
	0&0&-z_3^{\ts}\nabla((a^{\ts}_{26})^2\pa_{p_2}\log \pi )&-z^{\ts}_4\nabla((a^{\ts}_{26})^2\pa_{p_2}\log \pi )
\end{pmatrix}z^{\ts}\\
&&\qquad+z\begin{pmatrix}
	0&0&0&-z_1^{\ts}\nabla((a^{\ts}_{14})^2\pa_{p_0}\log \pi )&0&0\\
	0&0&0&0&0&0\\
	0&0&0&0&0&-z_3^{\ts}\nabla((a^{\ts}_{26})^2\pa_{p_2}\log \pi )\\
	0&0&0&-z^{\ts}_4\nabla((a^{\ts}_{14})^2\pa_{p_0}\log \pi)&\mathsf S_1&-z^{\ts}_4\nabla((a^{\ts}_{26})^2\pa_{p_2}\log \pi )
\end{pmatrix}\Big\} \mathsf U,
	\eeaa 
where we denote
\beaa 
\mathsf S_1 &=&|a^{\ts}_{14}|^2\pa_{p_0p_0}z^{\ts}_{45} +|a^{\ts}_{26}|^2 \pa_{p_2p_2}z^{\ts}_{45}+(|a^{\ts}_{14}|^2 \pa_{p_0}\log\pi)\pa_{p_0}z^{\ts}_{45}+(a^{\ts}_{26})^2 \pa_{p_2}\log\pi \pa_{p_2}z^{\ts}_{45}\\
&=&\xi T\pa_{p_0p_0}z^{\ts}_{45} +\xi T \pa_{p_2p_2}z^{\ts}_{45}+(\xi T \pa_{p_0}\log\pi)\pa_{p_0}z^{\ts}_{45}+(\xi T \pa_{p_2}\log\pi)\pa_{p_2}z^{\ts}_{45}.
\eeaa 
According to our definition, for $z^{\ts}_{44}=z_{31}, z^{\ts}_{45}=z_{32}(p_0,p_2),z^{\ts}_{46}=z_{33}$, we have
\beaa
	\mathfrak R_{z}(\mathsf{U},\mathsf{U})&=&\frac{1}{2}\mathsf U^{\ts} \Big[ \begin{pmatrix}
	0&0&0&0\\
	0&0&0&0\\
	0&0&0&0\\
	z_2\xi &0&0&z_{31}\xi \\
	0&0&0&\mathsf S_1\\
	0&0&z_2\xi&z_{33}\xi
\end{pmatrix}z^{\ts}+z\begin{pmatrix}
	0&0&0&z_2\xi&0&0\\
	0&0&0&0&0&0\\
	0&0&0&0&0&z_2\xi\\
	0&0&0&z_{31}\xi&\mathsf S_1&z_{33}\xi
\end{pmatrix}\Big] \mathsf U\\
	&=&\begin{pmatrix}
	0 & \frac{1}{2}z_1z_2\xi\mathsf I_3^O\\
	\frac{1}{2}z_1z_2\xi\mathsf I_3^O & \frac{1}{2}(\overrightarrow{z_3^{\mathsf S}}\overrightarrow{z_3}^{\ts}+\overrightarrow{z_3}(\overrightarrow{z_3^{\mathsf S}})^{\ts})
\end{pmatrix}_{6\times 6},
	\eeaa
	where we denote	$\overrightarrow{z_3^\mathsf S}=(z_{31}\xi\quad \mathsf S_1 \quad z_{33}\xi)^{\ts}$.
Furthermore, we have  $	\mathfrak R_{\gamma_a}(\mathsf{U},\mathsf{U})= -  \la \nabla \gamma \mathsf{U},aa^{\ts}\mathsf{U}\ra_{\mathbb R^{6}}=-\mathsf U^{\ts} \frac{1}{2}[(\nabla \gamma)^{\ts}aa^{\ts}+aa^{\ts} \nabla \gamma]\mathsf U$, since $\nabla(aa^{\ts})=0$. From our definition for $(\nabla \gamma)_{ij}=\nabla_i\gamma_j$, we have 
\beaa
\nabla \gamma=\begin{pmatrix}
	0& \nabla_{qq}H\\
	-\nabla_{pp}H& 0
\end{pmatrix}=\begin{pmatrix}
	0& L\\
	-\mathsf I_3& 0
\end{pmatrix}. 
\eeaa 
Here we denote \qi{$\nabla^2_{qq}H$ ($\nabla^2_{pp}H$ resp.)} the Hessian in the q ($p$ resp.) direction, i.e. \qi{$L=(\nabla^2_{q_iq_j}H)_{0\le i,j\le 2}$}.
Plugging them in $\nabla\gamma$, we have
\beaa
\mathfrak R_{\gamma_a}&=& \frac{1}{2} \begin{pmatrix}
		0&\xi T\mathsf I_3^O\\
		\xi T\mathsf I_3^O&0
	\end{pmatrix}_{6\times 6}.
\eeaa 
{ Similarly, $
	\mathfrak R_{\gamma_z}(\mathsf{U},\mathsf{U})=\frac{1}{2}\la \mathsf{U},\sum_{k=1}^6\gamma_k \nabla_k(zz^{\ts})\mathsf{U}\ra -\frac{1}{2}\mathsf{U}^{\ts}[(\nabla \gamma)^{\ts}zz^{\ts}+zz^{\ts} \nabla \gamma]\mathsf{U}$. And we get
	\beaa
	\mathfrak R_{\gamma_z}&=&\frac{1}{2}\begin{pmatrix}0& 0 \\
	0& [\partial_{q_0}H\partial_{p_0}+\partial_{q_2}H\partial_{p_2}](\overrightarrow{z_3}\overrightarrow{z_3}^{\ts} )
	\end{pmatrix}_{6\times 6}\\
	&&+\begin{pmatrix}
		z_1z_2\mathsf I_3&\frac{1}{2}(z_2^2\mathsf I_3+ \overrightarrow{z_3}\overrightarrow{z_3}^{\ts} -z_1^2L)\\
		\frac{1}{2}(z_2^2\mathsf I_3+ \overrightarrow{z_3}\overrightarrow{z_3}^{\ts} -z_1^2L)& -z_1z_2 L
	\end{pmatrix}_{6\times 6}.
	\eeaa 
}
	
Lastly, we compute the matrix tensor $\mathfrak R_{\pi}$.
	\beaa
\label{tensor R Psi}\mathfrak{R}_{\pi}(\mathsf{U},\mathsf{U})
	&=&-2\sum_{j=1}^4\sum_{l=1}^2\left[ a^{\ts}_{l}a^{\ts}_{l} \nabla^2z^{\ts}_{j}\mathsf{U} z^{\ts}_{j} \mathsf{U} \right] \nonumber\\
&&-2\sum_{j=1}^4\sum_{l=1}^2 \left[ (a^{\ts}_{l}\nabla z^{\ts}_j \mathsf{U})^2  +(a^{\ts}\nabla\log\pi)_l \Big[ a^{\ts}_{l}\nabla z^{\ts}_{j}\mathsf{U} z^{\ts}_{j}\mathsf{U}\Big]  \right], \nonumber\\
&=&-2 |a^{\ts}_{14}|^2\pa_{p_0p_0}z^{\ts}_{45}\mathsf U_5 z^{\ts}_4\mathsf U-2 |a^{\ts}_{26}|^2\pa_{p_2p_2}z^{\ts}_{45}\mathsf U_5 z^{\ts}_4\mathsf U\\
&&-2(a^{\ts}_{14}\pa_{p_0}z^{\ts}_{45}\mathsf U_5)^2-2(a^{\ts}_{26}\pa_{p_2}z^{\ts}_{45}\mathsf U_5)^2\\
&&-2a^{\ts}_{14}\pa_{p_0}\log \pi a^{\ts}_{14}\pa_{p_0}z^{\ts}_{45}\mathsf U_5z^{\ts}_4\mathsf U-2a^{\ts}_{26}\pa_{p_2}\log \pi a^{\ts}_{26}\pa_{p_2}z^{\ts}_{45}\mathsf U_5z^{\ts}_4\mathsf U\\
&=&-2\xi T [\pa_{p_0p_0}z^{\ts}_{45} +\pa_{p_2p_2}z^{\ts}_{45}+\pa_{p_0}\log\pi \pa_{p_0}z^{\ts}_{45}+\pa_{p_2}\log\pi\pa_{p_2}z^{\ts}_{45}  ]\mathsf U_5z^{\ts}_4 \mathsf U\\
&&-2(a^{\ts}_{14}\pa_{p_0}z^{\ts}_{45}\mathsf U_5)^2-2(a^{\ts}_{26}\pa_{p_2}z^{\ts}_{45}\mathsf U_5)^2\\
&=& -2 \mathsf S_1 \mathsf U_5 z^{\ts}_4 \mathsf U -2\xi T(\pa_{p_0}z^{\ts}_{45}\mathsf U_5)^2-2\xi T  (\pa_{p_2}z^{\ts}_{45}\mathsf U_5)^2\\
&=&-\mathsf U^{\ts}\Big[\begin{pmatrix}
	0\\
	0\\
	0\\
	0\\
	 \mathsf S_1\\
	0
\end{pmatrix} z^{\ts}_4 +z_4 \begin{pmatrix}
	0&0&0&0& \mathsf S_1 &0
\end{pmatrix} \Big] \mathsf U\\
&&-\mathsf U^{\ts}\begin{pmatrix}
	0&0&0&0&0&0\\
	0&0&0&0&0&0\\
	0&0&0&0&0&0\\
	0&0&0&0&0&0\\
	0&0&0&0&2\xi T [|\pa_{p_0}z^{\ts}_{45}|^2+|\pa_{p_2}z^{\ts}_{45}|^2 ] &0\\
	0&0&0&0&0&0
\end{pmatrix} \mathsf U,
\eeaa 
which directly gives
\beaa
\mathfrak R_{\pi}= \begin{pmatrix}
	0&0\\
	0&\mathsf I_{\pi}
\end{pmatrix}_{6\times 6},\q\text{with}\q  \mathsf I_{\pi}=\begin{pmatrix}
	0&0&0\\
	0&-2 z_{32}\mathsf S_1- 2\xi T [|\pa_{v_0}z^{\ts}_{45}|^2+|\pa_{v_2}z^{\ts}_{45}|^2 ]&0\\
	0&0&0
\end{pmatrix}.
\eeaa 
Summing over all above matrices, we finish the proof.
\qed
\end{proof}
\end{document}